\newtheorem{theorem}{Theorem}[section]
\newtheorem{problem}[theorem]{Problem}
\newtheorem{definition}[theorem]{Definition}
\newtheorem{assumption}[theorem]{Assumption}
\newtheorem{proposition}[theorem]{Proposition}
\newtheorem{lemma}[theorem]{Lemma}
\newtheorem{example}[theorem]{Example}
\newtheorem{remark}[theorem]{Remark}
\newtheorem{procedure}{Procedure}
\newtheorem{proof}{Proof}
\colorlet{mydarkblue}{blue!30!black}
\pgfplotsset{compat=1.12} 
\setlist[enumerate]{leftmargin=.5in}
\setlist[itemize]{leftmargin=.5in}
\newcommand{\convexhull}{\mbox{$\mathrm{convh}$}}
\newcommand{\sign}{\mbox{$\mathrm{sign}$}}
\newcommand{\interior}{\mbox{$\mathrm{intrior}$}}
\newcommand{\Range}{\mbox{$\mathrm{R}$}}
\DeclareMathOperator{\diag}{diag}
\begin{document}

\title{
\vspace{8mm}
Optimal Control of a Stochastic Power System \\ 
-- Algorithms and Mathematical Analysis 
}
\author{Zhen Wang
\thanks{
Zhen Wang is with the School of Mathematics, Shandong University, 
Jinan, 250100, Shandong Province, China and
is visiting the Dept. of Applied Mathematics, Delft University of Technology, 
Delft, The Netherlands from November 2021 till February 2024 (email: wangzhen17@mail.sdu.edu.cn). Zhen Wang is grateful to the China Scholarship Council (No. 202106220104)
for financial support for his stay at Delft University of Technology
in Delft, The Netherlands.
}~,\\
Kaihua Xi \footnotemark[2]~,
Aijie Cheng 
\thanks{
Kaihua Xi and Aijie Cheng are with 
the School of Mathematics, Shandong University, Jinan, 
250100, Shandong Province, China 
(email: kxi@sdu.edu.cn, aijie@sdu.edu.cn).
}~, \\
Hai Xiang Lin \footnotemark[3]~,  
Jan H. van Schuppen
\thanks{
Hai Xiang Lin and Jan H. van Schuppen are with the
Dept. of Applied Mathematics, Delft University of Technology, 
Delft, The Netherlands
(email: H.X.Lin@tudelft.nl, 
j.h.vanschuppen@tudelft.nl, 
vanschuppenjanh@freedom.nl).
}~.
\vspace{-3mm}
}

\maketitle
\begin{abstract}
The considered optimal control problem of a stochastic power system,
is to select the set of power supply vectors
which infimizes the probability
that the phase-angle differences of any power flow of the network,
endangers the transient stability of the power system 
by leaving a critical subset.
The set of control laws is restricted to be a periodically
recomputed set of fixed power supply vectors based
on predictions of power demand for the next short horizon.
Neither state feedback nor output feedback is used.
The associated control objective function
is Lipschitz continuous, nondifferentiable, and nonconvex.
The results of the paper include
that a minimum exists in the value range of the control objective function.
Furthermore, it includes a two-step procedure to compute an approximate minimizer 
based on two key methods: 
(1) a projected generalized subgradient method 
for computing an initial vector, and 
(2) a steepest descent method for approximating a local minimizer. 
Finally, it includes two convergence theorems 
that an approximation sequence converges to a local minimum. 
\end{abstract}
\par\vspace{1\baselineskip}\par\noindent
{\bf AMS subject classifications.}
93E20, 90C30, and 90C26. \\
{\em Keywords and Phrases:} 
Constrained nonlinear optimization,
Nonconvex and nondifferentiable, and
Convergence analysis.

\section{Introduction}\label{sec:123}
\paragraph{Motivation}
The primary motivation for the optimal control problem investigated in this paper is
to safeguard transient stability of a stochastic power system. 
The control objective is 
to find that power supply vector in a domain
which minimizes the control objective function. 
The control objective function quantifies the performance 
of the stochastic power system for each power supply vector. 
The value of the control objective function
is a threshold for the probability that 
the power flow in any power line exits a critical subset.
The readers are referred to a companion paper
\cite{zhenwang:reportone:2023} for further details.
\par
The {\em control objective function} is formulated 
as the maximum value among a finite set of secondary functions. 
Each secondary function is a sum of two constituent functions: 
one component denoted as $\sigma_k$, 
exhibits non-convex characteristics, 
while the other component referred to as $m_k$, 
may lack differentiability in cases where a component 
of the argument becomes zero.
The control objective function satisfies
a generic differentiability property 
but exhibits nondifferentiability 
on either an algebraic subset or a subset of lower algebraic complexity. 
The domain of power supply vectors denoted as $P^+$, 
takes the form of a polytope.
\paragraph{The Optimal Control Problem}
The approach to the optimal control problem,
based on periodic recomputation of an optimal set of power supply vectors,
leads to an infimization problem for a control objective function.
That function is Lipschitz continuous, nondifferentiable, and nonconvex.
The nondifferentiability stems from the representation 
of the function as the maximum of several functions, 
each such function containing a term that defies differentiability at zero. 
The nonconvexity is due to the Lyapunov matrix equation.

%
\paragraph{Literature review} 
Analysis and control of a stochastic power system
have been treated in a number of papers,
\cite{anderson:bose:1983,demarco:bergen:1987,dong:zhao:hill:2012}.
Investigated have been simulations
and the approximation of 
the probability distribution of the first exit time
of the domain of attraction.
The latter approach requires 
computing the type-one synchronous states and
numerically approximating the probability distribution.
The latter approach is 
computationally difficult with a high computational complexity.
To the best of the author's knowledge,
they have not found the approach of the optimization problem 
in the literature.
The optimal control problem treated in this paper is transcribed to a constrained nonlinear optimization problem
which happens to be nonconvex and nondifferentiable.
It turns out to be a challenging task to optimize nonconvex and nondifferentiable objective functions, 
while they are frequently encountered in control engineering when random variables have been introduced.
\par 
A nonconvex optimization problem is difficult because the objective function has many minimizers. A nondifferentiable optimization problem is even more challenging 
because the gradient which is a fundamental tool that is often used in the iterations of optimization algorithms, is not everywhere defined. 
Instead, either a generalized subgradient or a generalized gradient is used 
to generalize the concept of a gradient 
to the set of nondifferentiable functions. 
The initial research of F. Clarke on subgradients 
\cite{clarke1975generalized,clarke2013functional} is well known.
B. Mordukhovich and others developed optimization of nondifferentiable and nonconvex functions,
\cite{mordukhovich:2006:volumeone,mordukhovich:2006:volumetwo,cui2021modern,mordukhovich2023easy}.
For basic insights into semiconvexity, semicontinuity, and various properties associated with these function classes,
we refer to \cite{mifflin1977semismooth}. 
Additionally, more comprehensive information on generalized gradients and 
generalized subgradients can be found in references such as 
\cite{clarke1975generalized,rockafellar1980generalized,balder:2009}.
Readers who are interested in optimization criteria and 
methodologies related to these optimization techniques, 
we recommend exploring in 
\cite{canovas2010variational,mordukhovich2013subdifferentials}.
\par
In addition to the generalized gradient, which represents a vector class, researchers have also leveraged directional derivatives to optimize nonsmooth functions. Notably, A. Ben-tal introduced second-order necessary and sufficient optimality criteria for four distinct types of nonsmooth minimization problems in \cite{ben1982necessary}. Subsequently, the same author extended the application of both first and second directional derivatives to establish necessary and sufficient conditions for a strict local minimizer, as elaborated in \cite{ben1985directional}. Expanding on this work, A. Shapiro provided a comprehensive overview of various directional derivative concepts in \cite{shapiro1990concepts}.
\par
Many algorithms have been developed for nonconvex and nonsmooth optimization. T. Liu et al. introduced the successive difference-of-convex approximation method, demonstrating its global convergence for a specific class of objective functions in \cite{liu2019successive}. In a similar vein, Y. Wang proposed the ADMM algorithm for nonconvex nonsmooth optimization, which also exhibits global convergence, particularly for objective functions with equality constraints, as discussed in \cite{wang2019global}.
Another noteworthy contribution comes from B. Wen et al., who presented the proximal gradient algorithm with extrapolation and conducted a comprehensive convergence analysis, referred to as R-linear convergence in \cite{wen2017linear}. 
\par 
In this paper, both the generalized subgradient and the directional derivative will be used to infimize our control objective function. We will use a two-step method, the first step uses a projected generalized subgradient method to determine an initial position that is in the domain of attraction of a relatively good local minimizer, then the second step presents a steepest descent method starting from the initial position to converge to the local minimizer. Because the iteration is based on either the generalized subgradient or the first directional derivative, the proposed algorithms are first-order algorithms.
\paragraph{Contributions of this Paper} 
(1) The continuity and the differentials of the control objective function 
per direction vector.
(2) The existence of a minimizer. 
(3) An algorithm to compute the gradient vector and 
the Hessian matrix of the standard deviation that served as an implicit function within the control objective function, and
two algorithms, 
(3.A) to compute a stationary vector as an initial vector 
for the next algorithm, and 
(3.B) to approximate a local minimizer. 
(4) Theorems which imply that an approximation sequence,
produced by the algorithms,
converges to a value or to a local minimum.
(5) An example which demonstates the proposed algorithms.
\paragraph{Paper Organization}
Section \ref{sec:problemformulation} 
provides an introduction to power system fundamentals, 
the domain of power supply vectors, and
the procedure for the calculation of the control objective function. 
In Section \ref{sec:minimizerexistence}, 
we establish the existence of a minimizer and 
analyze the control objective function's continuity. 
Section \ref{sec:convexityanddifferential} examines convexity, 
and provides the first and the second directional derivatives.
Section \ref{sec:algorithms}  presents three algorithms: 
(1) for computing the gradient vector and the Hessian matrix 
of a standard deviation $\sigma_i,\:i\in\mathbb{Z}_{n_E}$, 
(2) an algorithm based on the steepest descent method, and 
(3) an algorithm using the projected generalized subgradient method;
along with convergence theorems. 
Section \ref{sec:examples} uses an academic example to demonstrate the 
proposed algorithms.
Section \ref{conclusionfurtherinvestigation}
summarizes conclusions and outlines open research challenges.
\section{From an optimal control problem to a nonlinear optimization problem}\label{sec:problemformulation}
\paragraph{Notation} 
The following mathematical notation is used in this paper. 
Denote the integers, the positive integers, and the natural numbers 
respectively by
$\mathbb{Z}$, $\mathbb{Z}_+ = \{ 1, ~ 2, ~ \ldots \}$, and 
$\mathbb{N} = \{ 0, ~ 1, ~ 2, ~ \ldots \}$.
For any integer $n \in \mathbb{Z}_+$ denote the finite sets
$\mathbb{Z}_{n}=\{1,2,\cdots,n\}$ and $\mathbb{N}_{n}=\{0,1,2,\cdots,n\}$. 
Denote the positive and the strictly positive real numbers respectively by
$\mathbb{R}_{+}$ and $\mathbb{R}_{s,+}$.
Define the sign function as,
$\sign(x)=+1$ if $x>0$, $-1$ if $x<0$, and $0$ if $x=0$.
The $n$-dimensional Euclidean space is denoted by
$\mathbb{R}^n$ 
and it is equiped with the inner product $\langle\cdot,\cdot\rangle$, 
the infinity norm $\|\cdot\|_\infty$, and 
the Euclidean norm $\|\cdot\|_2$. For any integer $n \in \mathbb{Z}_+$,
$\mathbb{R}_+^n$ and $\mathbb{R}_{s,+}^n$ denote respectively
the $n$-fold product of $\mathbb{R}_+$ and $\mathbb{R}_{s,+}$.
\par
The set of matrices of size $m \times n$ 
with elements of the real numbers is denoted by
$\mathbb{R}^{m \times n}$.
The matrix transpose operator is denoted by $\top$.
The spectrum norm and the Frobenius norm of a matrix
are denoted by $\|\cdot\|_{2,s}$ and $\|\cdot\|_F$, respectively.
Denote the n-th row of matrix $A\in \mathbb{R}^{m \times n}$ by $A(n)$ and 
the n-th column of matrix $A$ is denoted by $A_n$.
Denote the set of diagonal matrices of size $n \times n$ 
with positive or strictly positive elements as 
$\mathbb{R}_{+,\diag}^{n \times n}$ and 
$\mathbb{R}_{s+,\diag}^{n \times n}$, respectively.
A diagonal matrix 
with elements from the vector $v\in \mathbb{R}^n$ 
is represented as $\diag(v)\in \mathbb{R}^{n \times n}$.
Unit vectors: We use $e_k$ to represent the k-th unit vector. 
Identity matrix: We denote the identity matrix of size $n$ as $I_{n}$.
Column selection: For $m>n$, we denote the n-th to m-th columns of the product matrices $\left(A\: B\: C\right)$ as $(A\: B\: C)([n:m])$.
\par
Hadamard product and power: The Hadamard product and Hadamard power, representing component-wise multiplication or power, is denoted as $A\circ B$ or $A\circ^x$, separately. 
The scalar-vector product of a real number $c \in \mathbb{R}$ 
and a vector $v \in \mathbb{R}^n$ is denoted by $c \cdot v$.
\subsection{Introduction to the optimal control problem}
Control of a power system is treated
with a focus on frequency control;
voltage control is not considered as is customary in frequency control.
The approach of secondary frequency control is used
in which there is a sequence of short horizons of a few minutes, 
think of 3 to 5 minutes.
In each short horizon,
the power supply is adjusted and kept constant during this short horizon.
\par
The control objective is to safeguard transient stability.
A generally accepted sufficient condition for stability
is that the phase-angle difference over any power line
remains in the critical subset  $(- \pi/2, ~ +\pi/2) \subset \mathbb{R}$.
Currently, there is no satisfactory proof that this condition
is sufficient in the literature.
\par
Due to disturbances of power sources and of power loads,
which are expected to increase in the future,
the phase-angle differences become random variables.
The performance criterion to minimize is thus
the probability that the phase-angle differences 
over all power lines to exit the critical subset is less than a set threshold $\epsilon \in (0, ~ + 1)$.
The reader may think of the probability to be $10^{-3}$.
Then one obtains the control objective function 
defined below, in which other minor approximations are used.
For further details, the reader is referred to the companion paper
\cite{zhenwang:reportone:2023}.
Since state feedback and output feedback are not considered in this paper, we
need to solve a constrained nonlinear optimization problem.
\subsection{The domain of power supply vectors}\label{sec:domain}
\paragraph{Power System Preliminaries}
The power network is modelled by a graph $G = (V, ~ E)$ 
with the set of vertices $V$ and the set of edges in $E$.
There are $n_V \in \mathbb{Z}_+$ vertices
and $n_E \in \mathbb{Z}_+$ edges.
A line between two busses of a power network
is modelled by an edge denoted by
$k = (i_k, ~ j_k) \in E$ which connects vertices $i_k$ and $j_k$.
Nodes with providing power supply are indexed according to
$n_1, ~ n_2,~ \ldots, ~ n^+$ and
nodes with only power demand are indexed by
$n^++1,n^++2,\ldots,n_V$. 
The diagonal matrices of the strictly positive inertias
and the positive damping coefficients are denoted by
$M = \diag([m_1,\ldots,m_{n_V}])$ and 
$D = \diag([d_1,\ldots,d_{n_V}])$.
The network incidence matrix is 
$B\in\mathbb{R}^{n_V\times n_E}$
and the weight matrix is 
$W =\diag([w_1,\ldots,w_{n_E}])\in\mathbb{R}_{s+,\diag}^{n_E\times n_E}$.
Denote by the matrix $K_2\in \mathbb{R}_{+,\diag}^{n_{V}\times n_{V}}$ 
the standard deviation of the 
vector-valued Brownian motion acting on the frequencies of the nodes.
\par
Denote, for a short horizon,
the maximal available power supply by 
$p^{+,max} \in \mathbb{R}_+^{n^+}$;
the prediction of the power demand by
$p^{-} \in \mathbb{R}_+^{n_V-n^+}$;
the sum of the maximal power supply by
$p_{sum}^{+,max} =\sum_{i=1}^{n^+} ~ p_i^{+,max}\in \mathbb{R}_{s,+}$;
and the sum of the power demand by
$p_{sum}^- =\sum_{i=n^++1}^{n_{V}} ~ p_i^{-}\in \mathbb{R}_{s,+}$.
It is assumed that the sum of the available power supply
is larger than or equal to the sum of the predicted demand,
$p_{sum}^{+,max} \geq p_{sum}^{-}$.
Because the sum of the power supply has to equal
the sum of the predicted power demand,
the decision vector of the power supply can be defined as 
$p_s= \left( p_1^+, ~ \ldots, ~ p_{n^+-1}^+ \right)^\top \in \mathbb{R}_+^{n^+-1}$.
From $p_s$ one can compute the last element of the vector of power supplies,
$p_{n^+}^+ = p_{sum}^- -  \sum_{i=1}^{n^+-1} p_s\left(i\right)$.
\begin{definition}\label{dom:DODEFINITION2}
Define the domain of the power supply vector $p_s\in\mathbb{R}_+^{n^+-1}$,
depending on the maximal power supply $p^{+,max}$ and 
the sum of the power demands $p_{sum}^-$,
as the set, 
\vspace{-2.8mm}
\begin{align}\label{DODEFINITION2}
    P^+
	& = P^+( p^{+,max}, ~ p_{sum}^- ) =
	\left\{
          p_s \in \mathbb{R}_+^{n^+-1} |~ 
	  b_1 \leq A_1\:  p_s, ~
	  p_s \leq b_2
        \right\},\\
                                      b_1\left(i\right)
    & = p_{sum}^--p_{n^+}^{+,max}-\cdots-p_{i+1}^{+,max}, ~
                                                                 b_2 \left(i\right) 
     = p_{i}^{+,max},~
      b_1,\:b_2\in \mathbb{R}^{n^+-1},\\
          A_1\left(i,j\right)
    & =
      \begin{cases}
        1 \quad \text{if} ~ i \leq j\\
        0 \quad \text{if} ~ i> j
      \end{cases}, ~
      A_1\in \mathbb{R}^{(n^+-1)\times (n^+-1)}.
         \end{align}
The domain $P^+$ is set to meet the power demand and realistic constraints.
\end{definition}
\begin{proposition}\label{thmdomain}
The domain $P^+$ is compact and convex. Moreover, it is a polytope.
\end{proposition}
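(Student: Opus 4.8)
The plan is to show directly that $P^+$ is the intersection of finitely many closed half-spaces (together with the nonnegativity constraints), hence a polyhedron, and then to check that this polyhedron is bounded, which for a polyhedron is equivalent to being a polytope; compactness and convexity follow immediately. So the whole argument reduces to two observations: (i) $P^+$ is cut out by finitely many affine inequalities, and (ii) it is bounded.

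**For (i),** I would simply read off the constraints from Definition \ref{dom:DODEFINITION2}. The condition $p_s \in \mathbb{R}_+^{n^+-1}$ contributes the $n^+-1$ half-spaces $\{p_s : p_s(i) \ge 0\}$; the condition $p_s \le b_2$ contributes another $n^+-1$ half-spaces $\{p_s : p_s(i) \le p_i^{+,max}\}$; and the condition $b_1 \le A_1 p_s$ contributes $n^+-1$ half-spaces $\{p_s : \langle A_1(i), p_s\rangle \ge b_1(i)\}$, since each row of $A_1$ defines an affine functional of $p_s$. Each such set is closed and convex, and a finite intersection of closed convex sets is closed and convex; this already gives that $P^+$ is a (closed, convex) polyhedron. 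I would state this cleanly by writing $P^+ = \{p_s \in \mathbb{R}^{n^+-1} : C p_s \le d\}$ for an explicit matrix $C$ (stacking $-I$, $I$, and $-A_1$) and vector $d$ (stacking $0$, $b_2$, and $-b_1$).

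**For (ii),** boundedness is immediate from the box constraints alone: $0 \le p_s(i) \le p_i^{+,max}$ for every coordinate $i$, so $P^+ \subseteq \prod_{i=1}^{n^+-1}[0, p_i^{+,max}]$, a compact box. Hence $P^+$ is a closed subset of a compact set, therefore compact, and a bounded polyhedron is by definition a polytope. (One may also invoke the Minkowski–Weyl theorem to pass from the half-space description to the convex-hull-of-vertices description, but this is not needed for the statement as phrased.)

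**The main subtlety** — really the only thing to be careful about — is a possible emptiness issue: if $P^+$ were empty it is still vacuously a compact convex polytope, so the statement holds regardless, but it is worth noting in passing that the standing assumption $p_{sum}^{+,max} \ge p_{sum}^-$ together with the structure of $b_1, b_2$ is what guarantees $P^+ \neq \emptyset$ in the cases of interest (e.g. one checks that $p_s$ with $p_s(i) = p_i^{+,max}$ satisfies $b_1 \le A_1 p_s$). Beyond that, there is no real obstacle; the proof is a routine verification that the defining constraints are affine and that the feasible region sits inside a box, so I would keep it to a few lines.
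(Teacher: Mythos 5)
Your proof is correct. The paper itself states Proposition~\ref{thmdomain} without any proof, so there is nothing to compare against on the authors' side; your argument --- reading off from Definition~\ref{dom:DODEFINITION2} that $P^+$ is an intersection of finitely many closed half-spaces (hence a closed convex polyhedron), observing that the box constraints $0 \leq p_s(i) \leq p_i^{+,max}$ force $P^+ \subseteq \prod_{i=1}^{n^+-1}[0,\, p_i^{+,max}]$ so that it is bounded and therefore compact, and concluding that a bounded polyhedron is a polytope --- is exactly the routine verification the authors evidently considered too standard to record. Your side remark on nonemptiness under the standing assumption $p_{sum}^{+,max} \geq p_{sum}^-$ is a useful addition, correctly flagged as not needed for the statement as phrased.
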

\subsection{The control objective function}
\begin{definition}\label{def:controlobjectivefunction}
Define the control objective and related functions according to,
\begin{align*}
    & ~~~~  \forall ~ k \in \mathbb{Z}_{n_E}, ~ 
        k = \left(i_k, \:j_k\right), ~  
        f_{as}: P^+ \rightarrow \mathbb{R}^{n_E},~
        \forall ~ p_s \in P^+, \nonumber \\
      m_{k}\left(p_s\right) 
    & = 
         f_{as,k}\left(p_s\right) = \arcsin\left( |\left (A \: p_s + b\right)_k | \right),~ f_{as,k}: P^+ \rightarrow \mathbb{R}_+,\\
             f_k\left(p_s\right)
    & =  f_{as,k}\left(p_s\right) + r \cdot \sigma_k\left(p_s\right) \in \mathbb{R}_+, ~
            \forall ~ k \in \mathbb{Z}_{n_E},~ f_{k}: P^+ \rightarrow \mathbb{R}_+, \nonumber  \\
        f\left(p_s\right)
    & =  \| f_k\left(p_s\right) \|_{\infty} 
          = \max_{k \in \mathbb{Z}_{n_E}} ~ f_k\left(p_s\right), ~  f: P^+ \rightarrow \mathbb{R}_+.\nonumber
       \end{align*} %
\end{definition}
The control objective function $f$ 
represents the maximum probability threshold across all power lines 
associated with the parameter $r\in (0,\infty)$ 
according to the invariant probability distribution. 
The function $f_{as,k}= m_k$ with respect to the power supply vector $p_s$
stands for the absolute value of 
the mean of the phase-angle difference of power line $k$. 
See \cite[pp. 95--97]{bourbaki:2004}
for properties of the $\arcsin$ function.
The function $\sigma_k$ 
represents the standard deviation of the phase-angle difference of that line.
Since it is an implicit function within the objective function $f$, it will be 
also referred to as an implicit function in the following sections.
\begin{procedure}\label{proc:computationcontrolobjectivefunction}
{\em Computation of the Control Objective Function for a Power Supply Vector}.
\par
Input data.
The parameters of the power system including 
the graph of the connected power network $G = (V, ~ E)$ and
the matrices $M, ~ D, ~ B, ~ W, ~ K_2$.
The vectors $p^{+,max}$ and $p^-$.
A power supply vector $p_s \in P^+$ and the vector of power demands
$p^-
= \begin{bmatrix}
    p_{n^++1}^{-} & \cdots &p_{n}^{-}
  \end{bmatrix}^\top \in \mathbb{R}^{n_V-n^+}$.
Finally the parameter of the control objective function $r \in \mathbb{R}_{s+}$.
\begin{enumerate}
\item
Compute an orthonormal matrix $U$, the matrix $A$, and the vector $b$ according to,
\begin{align*}
	 U^\top \: \Lambda^{\dagger}\:U
    & = \left(
	  B\:W\:B^\top
	\right)^{\dagger}, ~~
        U
        = \begin{bmatrix}
	    U_1 & U_2 & \cdots & U_{n_E}
        \end{bmatrix}, \\
    A
    & = B^{\top}\: U^{\top}\: \Lambda^{\dagger}\: 
	\begin{bmatrix}U_1-U_{n^+},&U_2-U_{n^+},&\cdots&,U_{n^+-1}-U_{n^+}
	\end{bmatrix}\in\mathbb{R}^{n_E\times (n^+-1)},\\
    b
    & = B^{\top}\: U^{\top}\: \Lambda^{\dagger}\: 
	\begin{bmatrix}
		U_{n^++1}-U_{n^+},&\cdots&,U_{n}-U_{n^+}
	\end{bmatrix}(-p^-)\in\mathbb{R}^{n_E}.
\end{align*}
\item
Solve the following Lyapunov matrix equation
to obtain the matrix $Q_y \in \mathbb{R}^{n_E \times n_E}$,
\begin{align}
    0
  & = J \left( 
	  p_s 
	\right) \:Q_x + Q_x\:J
	\left(
	 p_s
	\right)^{\top}
	+ K\:K^{\top}, ~ \label{Lyapeqn}\\
    Q_y
    & = C\:Q_x\:C^{\top},~ \mbox{where}, \label{Lyapouteqn} \\
    J\left(p_s\right)
  & = \begin{bmatrix}
            0_{n_{V} \times n_{V}}   & I_{n_V} \\
            - M^{-1} \:B \:W\: F(p_s) & - M^{-1} \:D 
      \end{bmatrix} \in \mathbb{R}^{2 n_V \times 2 n_V}, \\ 
        F\left(p_s\right)
   & = \diag
       \left(
	 \cos
	 \left(
	   \arcsin\left(A\:p_s + b\right)
	 \right)
       \right)\: B^\top
       \in \mathbb{R}^{n_E \times n_V}, \\
     K
   & = \begin{bmatrix}
              0 \\ K_2
       \end{bmatrix} \in ~\mathbb{R}^{2n_{V}\times n_{V}},~~
     C = \begin{bmatrix}
              B^\top & 0
         \end{bmatrix}\in ~\mathbb{R}^{n_{E}\times 2n_{V}}.
\end{align}
\item
Compute the values of standard deviation $\sigma_k$ and variance $V_k$ for the vector $p_s \in P^+$,
\begin{align*}
    \sigma_k\left(p_s\right) 
    & = Q_y\left(k,k\right)^{\frac{1}{2}}, ~ 
      \sigma \left(p_s\right)
      = \begin{bmatrix}
	    \sigma_1\left(p_s\right) & \cdots & \sigma_{n_E} \left(p_s\right)
        \end{bmatrix}^\top, ~
	\sigma: P^+ \rightarrow \mathbb{R}^{n_E}, \\
    V_k\left(p_s\right) 
    & = \sigma_k^2 \left( p_s \right), ~ 
      V\left(p_s\right)
      =
      \begin{bmatrix}
        V_1\left(p_s\right), &\cdots, &V_{n_E}\left(p_s\right)
      \end{bmatrix}^\top, ~
      V: P^+\to \mathbb{R}^{n_E}.
\end{align*}
\item
Compute the value of the control objective function
for the vector $p_s \in P^+$,
\begin{align*}
    m_{k}\left(p_s\right) 
    & = f_{as,k}\left(p_s\right) 
	= \arcsin\left( |\left (A \: p_s + b\right)_k | \right)=\arcsin\left( |A\left(k\right)\: p_s + b_k| \right),~ \\
    f_k\left(p_s\right)
    & = f_{as,k} \left(p_s\right) 
	+ r \cdot \sigma_k\left(p_s\right) \in \mathbb{R}_+, ~
	\forall ~ k = (i_k, ~ j_k ) \in \mathbb{Z}_{n_E}, \\
    f\left(p_s\right)
    & = \| f_k\left(p_s\right) \|_{\infty} 
          = \max_{k \in \mathbb{Z}_{n_E}} ~ f_k\left(p_s\right). 
\end{align*}
\end{enumerate}
\end{procedure}
\begin{remark}
If the matrix $J\left(p_s\right)$ is Hurwitz, 
then the matrix $Q_x$ is strictly positive definite. 
But in fact, the system matrix $J(\theta_s,0)$ is not Hurwitz 
because it has a zero eigenvalue due to 
the product matrix $BWF(p_s)$ having a zero eigenvalue. 
Therefore, there is a need for a reduction procedure, 
see \cite{WANG2023110884,wang2023siam}.  
Also Lemma \ref{Vaindiff} will show how to do the reduction.
\end{remark}
\subsection{The expression of constrained nonlinear optimization problem}\label{sec:problem}
Control of a stochastic power system leads
to the following constrained nonlinear optimization problem.
\begin{problem}\label{probleminfimizefunctionf}
Solve the infimization problem for the control objective function,
and determine a value $a \in \mathbb{R}$ and a minimizer
$p_s^*\in P^+$ such that,
\begin{align*}
 a
    & =  f\left(p_s^*\right)= \inf_{p_s \in P^+} ~ f\left(p_s\right)
     = \inf_{p_s \in P^+} ~\max_{k\in\mathbb{Z}_{n_{E}}}{
    \left[m_{k}\left(p_s\right)+r\cdot\sigma_k\left(p_s\right)\right]}.
\end{align*}
\end{problem}
\section{The existence of a minimizer of the control objective function}\label{sec:minimizerexistence}
The existence of a minimizer of a nonconvex function is investigated in \cite{ekeland1979nonconvex}.
\begin{theorem}\label{existence}
There exists a minimizer $p_s^*\in P^+$ such that $f\left(p_s^*\right)=\inf_{p_s\in P^+}{f\left(p_s\right)}$. 
It is important to note that this minimizer is generally not unique.
\end{theorem}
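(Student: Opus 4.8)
The plan is to invoke the classical Weierstrass extreme value theorem: a continuous real-valued function on a nonempty compact set attains its infimum. Two ingredients are needed, and each is essentially available from the material already set up. First, the feasible set $P^+$ is nonempty, compact, and convex by Proposition \ref{thmdomain} (in fact a polytope); nonemptiness follows from the standing assumption $p_{sum}^{+,max} \geq p_{sum}^-$, which guarantees at least one admissible power supply vector. Second, I must show that $f: P^+ \to \mathbb{R}_+$ is continuous on $P^+$.

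For the continuity of $f$, I would argue componentwise and then use that the pointwise maximum of finitely many continuous functions is continuous. Fix $k \in \mathbb{Z}_{n_E}$. The term $m_k(p_s) = \arcsin(|(A\,p_s+b)_k|)$ is a composition of the affine map $p_s \mapsto (A\,p_s+b)_k$, the absolute value, and $\arcsin$, each continuous on its domain (here one should note that the argument $(A\,p_s+b)_k$ stays in $[-1,1]$ for $p_s \in P^+$, so $\arcsin$ is applied legitimately — this is implicit in the problem setup and the definition of the critical subset). For the term $\sigma_k(p_s) = Q_y(k,k)^{1/2}$, the chain is: $p_s \mapsto F(p_s)$ is continuous (entries are $\cos(\arcsin(\cdot))$ of affine functions); hence $p_s \mapsto J(p_s)$ is continuous; the solution $Q_x$ of the Lyapunov equation \eqref{Lyapeqn} depends continuously on $J(p_s)$ wherever it is well-defined; then $Q_y = C\,Q_x\,C^\top$ and the square root of a nonnegative diagonal entry are continuous. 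Therefore each $f_k = m_k + r\cdot\sigma_k$ is continuous, and $f = \max_k f_k$ is continuous on $P^+$. Applying Weierstrass yields a minimizer $p_s^* \in P^+$.

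The main obstacle is the continuity of $\sigma_k$ via the Lyapunov equation, because — as the Remark following Procedure \ref{proc:computationcontrolobjectivefunction} points out — $J(p_s)$ is \emph{not} Hurwitz (it carries a structural zero eigenvalue coming from the rank deficiency of $BWF(p_s)$), so the Lyapunov equation \eqref{Lyapeqn} is singular and $Q_x$ is not the naive $\int_0^\infty e^{Jt}KK^\top e^{J^\top t}\,dt$. The correct reading is that the relevant object is the \emph{reduced} system obtained after projecting out the uncontrollable/unobservable zero mode, and the paper defers this to the reduction procedure of \cite{WANG2023110884,wang2023siam} and to Lemma \ref{Vaindiff}. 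So in the proof I would phrase the continuity of $\sigma$ as following from the well-posedness and continuous dependence of the reduced Lyapunov solution on $p_s$ over the compact set $P^+$, citing that reduction, rather than pretending $J(p_s)$ is stable. A secondary minor point to handle cleanly is confirming the $\arcsin$ arguments remain within $[-1,1]$ on $P^+$; if that is not automatic from the constraints defining $P^+$, one either restricts attention to the subset where it holds or notes it as a standing feasibility assumption inherited from the companion paper \cite{zhenwang:reportone:2023}. Finally, the non-uniqueness claim needs no proof — it is an observation, justified informally by the nonconvexity of $f$ (the Lyapunov term), and could be illustrated later by the example in Section \ref{sec:examples}.
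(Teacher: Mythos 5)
Your proposal is correct, but it takes a different route from the paper's. The paper does not invoke the Weierstrass extreme value theorem directly; instead it proves the stronger statement that $f$ is \emph{Lipschitz} continuous on $P^+$ (Lemmas \ref{Vaindiff}--\ref{PhDiLC} and Proposition \ref{ObjectLC}, via gradient bounds for $\sigma$ obtained from its smoothness on the compact domain, and a mean-value argument for the $\arcsin$ term), then shows that a Lipschitz continuous function maps a compact convex domain to a compact convex range (Proposition \ref{ComconSet}), concludes that the range is a closed interval $[a,c]$, and finally that the infimum $a$ is attained. Your argument is more economical: plain continuity of each $f_k$ by composition, continuity of the finite maximum, and Weierstrass on the nonempty compact $P^+$ — the convexity of the range, which the paper works to establish, is not needed for existence, and the underlying mechanism (a compact subset of $\mathbb{R}$ contains its infimum) is the same in both proofs. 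What the paper's longer route buys is the explicit Lipschitz constant $G = K_1 + r\cdot K_2$, which is reused later in the convergence analysis of the projected subgradient method (Theorem \ref{convergencetheorem1}), so the extra work is not wasted there even though it is superfluous for Theorem \ref{existence} itself. You also correctly identify the one genuine technical obstacle — continuity of $\sigma_k$ despite $J(p_s)$ not being Hurwitz — and resolve it the same way the paper does, by deferring to the reduction of Lemma \ref{Vaindiff}; and your flagging of the $\arcsin$ domain condition $(A\,p_s+b)_k \in [-1,1]$ is a point the paper leaves implicit. No gaps beyond those the paper itself carries.
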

\begin{proof}
The proof of this theorem is based on the following steps, each of which is demonstrated in the subsequent subsections:
\begin{enumerate}
\item Prove that the control objective function is Lipschitz continuous.
\item Prove that if the domain of a Lipschitz continuous function is both compact and convex, 
then the value range of this function is also compact and convex.
\item 
Prove that the value range of the control objective function being a convex and compact subset 
of $\mathbb{R}$
implies that the range is a closed interval, denoted as 
$R\left(f\right)=\left[a,~c\right]$ for $a,~c \in \mathbb{R}$ where $-\infty<a\leq c$.
\item 
Prove the existence of a minimizer $p_s^* \in P^+$
such that $f\left(p_s^*\right) = \inf_{p_s \in P^+}f\left(p_s\right) = a$. 
\end{enumerate}
\end{proof}
\subsection{The continuity of the control objective function}\label{LipschitzContinuity}
This subsection provides the proof of the Step $1$ of Theorem \ref{existence}. The subsection is organized as follows:
(1) Analysing the continuity of the implicit function $\sigma$ on $P^+$. (2) Demonstrating the continuity of the function $f_{as}$ on $P^+$. (3) Proving the continuity of the control objective function $f$ on $P^+$.
\begin{lemma}\label{Vaindiff}
The standard deviation of the phase-angle differences, 
$\sigma: P^+ \to \mathbb{R}_{s,+}^{n_{E}}$, 
is an infinitely differentiable function on $P^+$, denoted as $\sigma\in C^{\infty}(P^+)$.
\end{lemma}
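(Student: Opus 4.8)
The plan is to show that $\sigma_k(p_s) = Q_y(k,k)^{1/2}$ is infinitely differentiable by peeling off the composition layer by layer. First I would establish that the map $p_s \mapsto J(p_s)$ is $C^\infty$ on $P^+$: the only potentially problematic ingredient is $F(p_s) = \diag(\cos(\arcsin(A\,p_s+b)))\,B^\top$, but $\cos(\arcsin(x)) = \sqrt{1-x^2}$ is smooth on the open interval $(-1,1)$, and on $P^+$ the constraints force every component of $A\,p_s+b$ to satisfy $|(A\,p_s+b)_k| < 1$ (equivalently the phase-angle differences lie strictly inside $(-\pi/2,+\pi/2)$, so $\arcsin$ is well inside its smooth range); hence $J(\cdot)$ is an analytic, in particular $C^\infty$, matrix-valued function.

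The central difficulty is that, as the remark preceding the lemma points out, $J(p_s)$ is \emph{not} Hurwitz — it has a zero eigenvalue coming from the kernel of $B\,W\,F(p_s)$ — so the Lyapunov equation \eqref{Lyapeqn} does not have a unique solution $Q_x$, and the standard implicit-function argument cannot be applied directly to $2n_V\times 2n_V$ matrices. The remedy, which I expect to be the technical heart of the proof, is the reduction procedure referenced via \cite{WANG2023110884,wang2023siam}: one changes coordinates to split off the uncontrollable/uncensored zero mode (the common-phase-shift direction), so that on the reduced state space of dimension $2n_V-1$ the reduced system matrix $\tilde{J}(p_s)$ \emph{is} Hurwitz for every $p_s\in P^+$, and the output $Q_y = C\,Q_x\,C^\top$ only sees the reduced part because $C = [B^\top\ 0]$ annihilates the zero-mode direction (the range of $B^\top$ is orthogonal to $\ker B$). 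I would carry this out explicitly: exhibit the projection, write the reduced Lyapunov equation $0 = \tilde{J}(p_s)\tilde{Q}_x + \tilde{Q}_x\tilde{J}(p_s)^\top + \tilde{K}\tilde{K}^\top$, and note $Q_y$ is an affine image of $\tilde{Q}_x$.

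With Hurwitzness of $\tilde{J}(p_s)$ in hand, the reduced Lyapunov equation is a linear system $\mathcal{L}(p_s)\,\mathrm{vec}(\tilde{Q}_x) = -\mathrm{vec}(\tilde{K}\tilde{K}^\top)$ whose coefficient matrix $\mathcal{L}(p_s) = I\otimes \tilde{J}(p_s) + \tilde{J}(p_s)\otimes I$ is invertible precisely because $\tilde{J}$ has no two eigenvalues summing to zero (true for a Hurwitz matrix). Since $p_s\mapsto \mathcal{L}(p_s)$ is $C^\infty$ and matrix inversion is $C^\infty$ on the open set of invertible matrices, $\tilde{Q}_x$, hence $Q_x$ and $Q_y$, depend smoothly on $p_s$; alternatively one invokes the implicit function theorem on the smooth map $(p_s,\tilde{Q}_x)\mapsto \tilde{J}\tilde{Q}_x+\tilde{Q}_x\tilde{J}^\top+\tilde{K}\tilde{K}^\top$ whose partial derivative in $\tilde{Q}_x$ is the invertible Lyapunov operator. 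Finally, the diagonal entry $Q_y(k,k)$ is a linear (hence smooth) functional of $Q_y$, and it is strictly positive — here I would argue that $(\tilde{K}\tilde{K}^\top)$ together with $\tilde{J}$ is such that $\tilde{Q}_x \succ 0$ on the reduced space and the $k$-th row of $B^\top$ is nonzero, so $\sigma_k^2(p_s) = Q_y(k,k) > 0$ on all of $P^+$. Consequently $x\mapsto x^{1/2}$ is applied only on $(0,\infty)$ where it is $C^\infty$, and the composition $\sigma_k = (Q_y(k,k))^{1/2}$ is $C^\infty$ on $P^+$, giving $\sigma \in C^\infty(P^+)$ with values in $\mathbb{R}_{s,+}^{n_E}$ as claimed.
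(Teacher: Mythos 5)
Your proposal is correct and follows essentially the same route as the paper: smoothness of the coefficient matrices, reduction of the zero eigenvalue to obtain a Hurwitz system matrix on a $(2n_V-1)$-dimensional reduced space, smooth dependence of the reduced Lyapunov solution on $p_s$, and composition with the square root. The only substantive difference is that you justify the smooth dependence of the Lyapunov solution via the invertible Kronecker-sum operator $I\otimes\tilde{J}+\tilde{J}\otimes I$ (equivalently the implicit function theorem), whereas the paper uses the integral representation $\int_0^\infty e^{J_d t}K_d K_d^\top e^{J_d^\top t}\,dt$; you are also more explicit than the paper about the two points where smoothness could otherwise fail, namely the need for $|(A\,p_s+b)_k|<1$ and for $Q_y(k,k)>0$ before taking the square root.
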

\begin{proof}
To prove this result, we proceed with the following steps:
(1) After linearization of the power system at a synchronous state, define the weight matrix function $W$ with respect to $p_s\in P^+$, as $W\left(p_s\right)=W \: \diag\left(\cos\left(\arcsin\left(A \: p_s + b\right)\right)\right)$. This leads to the equation $B \: W \: F(\theta_s) = B \: W(p_s) \: B^\top$.
(2) Transform the matrix product $M^{-1} \: B \:W(p_s) \:B^\top$ into a diagonal matrix whose first element is equal to zero using the matrix $M^{-\frac{1}{2}} \:U(p_s)$. Here, the matrix $U(p_s)\in\mathbb{R}^{n_{V}\times n_{V}}$ is an orthonormal matrix. This transformation is applied to both matrices $K$ and $C$, simultaneously.
\begin{equation}\label{Systemmatrix2}
\begin{aligned}
J_c\left(p_s\right)&=\left[
 \begin{array}{ll}
 0_{n_{V} \times n_{V}},&I_{n_{V}}\\
 -U\left(p_s\right)^\top M^{-\frac{1}{2}}B\:W\left(p_s\right)\:B^\top M^{-\frac{1}{2}}U\left(p_s\right),&-U\left(p_s\right)^\top M^{-1}\:D\:U\left(p_s\right)
 \end{array}
 \right]\\
 &\in \mathbb{R}^{2n_{V}\times 2 n_{V}},\\
 ~ K_c\left(p_s\right)&=\left[
 \begin{array}{ll}
 0_{n_{V} \times n_{V}}\\U\left(p_s\right)^\top M^{-\frac{1}{2}}\: K_2
 \end{array}
 \right]\in \mathbb{R}^{2n_{V}\times n_{V}},
  C_c\left(p_s\right)=\left[
  \begin{array}{ll}B^\top\:M^{-\frac{1}{2}}\: U\left(p_s\right),&0
  \end{array}
  \right]\in \mathbb{R}^{n_{E}\times 2 n_{V}}.
  \end{aligned}
  \end{equation}
  \par These matrices are modified as follows to eliminate eigenvalue 0 of $J_c$, which is the reason that equation (\ref{Lyapeqn}) is unsolvable. 
  The first row and first column of $J_c$ are removed. The first row of $K_c$ is removed. The first column of $C_c$ is removed. After that, we can get,
 \begin{equation}\label{Systemmatrix3}
 \begin{aligned}
 J_d\left(p_s\right)&=\left[\begin{array}{ll}0_{\left(n_{V}-1)\times (n_{V}-1\right)},&0_{\left(n_{V}-1\right)\times 1}~|~I_{n_{V}-1}\\ -\left(U\left(p_s\right)^\top M^{-\frac{1}{2}}\:J\: M^{-\frac{1}{2}}U\left(p_s\right)\right)\left([2:n_{V}]\right),&-U(p_s)^\top M^{-1}\:D\:U(p_s)\end{array}\right],\\
& \in \mathbb{R}^{(2n_{V}-1)\times (2n_{V}-1)}\\
 K_d\left(p_s\right)&=\left[\begin{array}{ll}0_{\left(n_{V}-1\right)\times n_{V}}\\U\left(p_s\right)^\top M^{-\frac{1}{2}}\:K_2\end{array}\right],
 C_d\left(p_s\right)=\left[\begin{array}{ll}\left(B^\top M^{-\frac{1}{2}}U(p_s)\right)\left([2:n_{V}]\right),&0_{n_{E} \times n_{V}}\end{array}\right],\\
& \in \mathbb{R}^{\left(2n_{V}-1\right)\times n_{V}},\qquad\qquad\in \mathbb{R}^{n_{E}\times (2n_{V}-1)}
 \end{aligned}
 \end{equation}
 It's worth noting that, following the deduction from Equation (\ref{Systemmatrix2}) to Equation (\ref{Systemmatrix3}), the matrix $J_d\left(p_s\right)$ becomes Hurwitz. Therefore, Equation (\ref{Lyapeqn}) possesses a positive-definite solution. 
Additionally, the value of the implicit function $\sigma_k$ for a vector $p_s$ can be computed by,
\begin{equation}\label{expressionofsigma}
 \sigma_k\left(p_s\right)=\left [C_d\left(p_s\right)\int_0^\infty{e^{J_d\left(p_s\right) t}K_d\left(p_s\right) \:K_{d}\left(p_s\right)^\top e^{J_d\left(p_s\right)^{\top} t}}dt~C_d\left(p_s\right)^\top\right]^{\frac{1}{2}}\left(k,k\right),
 \end{equation}
where, the matrix functions $C_d,\:J_d,\:K_d$ in Equation (\ref{expressionofsigma}) are infinitely differentiable with respect to $p_s\in P^+$, hence $\sigma_k$ is infinitely differentiable on $P^+$. 
\end{proof}
\begin{lemma}\label{VaLC}
The standard deviation of the phase-angle differences, denoted as $\sigma: P^+ \to \mathbb{R}_{s,+}^{n_{E}}$, is Lipschitz continuous on $P^+$.
\end{lemma}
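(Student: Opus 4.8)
The plan is to obtain Lipschitz continuity as an immediate consequence of the smoothness already established in Lemma~\ref{Vaindiff} together with the geometry of the domain from Proposition~\ref{thmdomain}. The guiding principle is standard: a continuously differentiable function on a compact convex set is Lipschitz continuous, with a Lipschitz constant controlled by the supremum of the norm of its gradient over the set.

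First I would invoke Lemma~\ref{Vaindiff} to conclude that each component $\sigma_k \in C^{\infty}(P^+)$, and in particular $\sigma_k \in C^1(P^+)$, so that the gradient $\nabla \sigma_k : P^+ \to \mathbb{R}^{n^+-1}$ exists and is continuous. By Proposition~\ref{thmdomain} the set $P^+$ is compact, so the continuous map $p_s \mapsto \|\nabla \sigma_k(p_s)\|_2$ attains a finite maximum; set $L_k = \max_{p_s \in P^+}\|\nabla \sigma_k(p_s)\|_2$ and $L = \big(\sum_{k \in \mathbb{Z}_{n_E}} L_k^2\big)^{1/2}$ (or $L = \max_{k \in \mathbb{Z}_{n_E}} L_k$ if the infinity norm on $\mathbb{R}^{n_E}$ is the one intended for $\sigma$).

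Next, for arbitrary $p_s, p_s' \in P^+$, convexity of $P^+$ guarantees that the segment $\gamma(t) = p_s' + t(p_s - p_s')$, $t \in [0,1]$, lies in $P^+$. Then for each $k$ I would write $\sigma_k(p_s) - \sigma_k(p_s') = \int_0^1 \langle \nabla \sigma_k(\gamma(t)), \, p_s - p_s' \rangle \, dt$ and apply the Cauchy--Schwarz inequality to get $|\sigma_k(p_s) - \sigma_k(p_s')| \le L_k \|p_s - p_s'\|_2$; combining over $k$ gives $\|\sigma(p_s) - \sigma(p_s')\|_2 \le L \|p_s - p_s'\|_2$, which is the asserted Lipschitz estimate on $P^+$.

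Since the genuinely analytical content -- the reduction of the singular Lyapunov equation to a Hurwitz one and the resulting smooth dependence of $\sigma_k$ on $p_s$, including positivity of $\sigma_k$ so the square root is harmless -- has already been dealt with in Lemma~\ref{Vaindiff}, I do not expect a real obstacle here. The only points that need care are (i) using the convexity of $P^+$ so that the fundamental-theorem-of-calculus argument is carried out along a path that stays in the domain, and (ii) using the compactness of $P^+$ to guarantee that the gradient bound $L_k$ is finite. If one preferred not to differentiate explicitly, an alternative is to note that a $C^1$ function is locally Lipschitz and extract a uniform constant from a finite subcover of the compact set $P^+$; but the gradient-bound route above is the shortest.
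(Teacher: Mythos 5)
Your proposal is correct and follows essentially the same route as the paper: both deduce from Lemma~\ref{Vaindiff} that $\nabla\sigma$ is continuous, bound its norm by a constant via compactness of $P^+$, and conclude Lipschitz continuity from the bounded gradient. The only difference is cosmetic: the paper delegates the final step to a citation of Khalil, whereas you spell out the mean-value/fundamental-theorem argument along a segment, explicitly using the convexity of $P^+$ that the citation implicitly requires.
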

\begin{proof}
The gradient of the function $\sigma$ for a vector $p_s\in P^+$, denoted as $\nabla \sigma(p_s)\in \mathbb{R}^{n_{E}\times (n^+-1)}$, is a continuous matrix function on the compact domain $P^+$. This continuity is guaranteed by the fact that $\sigma$ is a smooth function on $P^+$. Moreover, due to the compactness of the domain $P^+$, each component of $\nabla \sigma(p_s)$ can be bounded, which leads to $\|\nabla \sigma(p_s)\|_{2,s} \leq \|\nabla \sigma(p_s)\|_F \leq c_1$, where $c_1\in\mathbb{R}_{s,+}$ is a constant. Based on the results in \cite[p.15]{khalil2015nonlinear}, we can conclude that the function $\sigma$ is Lipschitz continuous on $P^+$.
\end{proof}
\begin{lemma}\label{PhDiLC}
The absolute value of the mean of the phase-angle differences of the entire power network, denoted as the function $f_{as}$ with the formula for a vector $p_s\in P^+$, $f_{as}\left(p_s\right) = \arcsin\left(|A\:p_s+b|\right)$ in Definition \ref{def:controlobjectivefunction}, is Lipschitz continuous on the domain $P^+$.
\end{lemma}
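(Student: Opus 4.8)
The plan is to realize $f_{as}$ as a composition of three elementary maps and to bound the Lipschitz constant of each factor, the only subtle point being the behaviour of $\arcsin$ near the endpoint $1$. Write $f_{as} = g \circ h \circ \ell$, where $\ell : \mathbb{R}^{n^+-1} \to \mathbb{R}^{n_E}$, $\ell(p_s) = A\,p_s + b$, is affine; $h : \mathbb{R}^{n_E} \to \mathbb{R}^{n_E}$ is the componentwise absolute value; and $g$ is the componentwise $\arcsin$. The map $\ell$ is globally Lipschitz with constant $\|A\|_{2,s}$, and $h$ is $1$-Lipschitz (reverse triangle inequality applied componentwise, in either the $\|\cdot\|_\infty$ or the $\|\cdot\|_2$ norm). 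The difficulty is that $g$ is \emph{not} Lipschitz on all of $[-1,1]$, since $\frac{d}{dy}\arcsin(y) = (1-y^2)^{-1/2} \to \infty$ as $y \to 1$; on $[-1,1]$ it is only Hölder continuous of exponent $\tfrac{1}{2}$.

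First I would establish that the relevant arguments stay uniformly bounded away from $1$, i.e. that there exists $c \in [0,1)$ with
\begin{equation*}
\sup_{p_s \in P^+}\ \max_{k \in \mathbb{Z}_{n_E}}\ \bigl| (A\,p_s + b)_k \bigr|\ \le\ c\ <\ 1 .
\end{equation*}
Here I would use that $P^+$ is compact (Proposition \ref{thmdomain}) and that $p_s \mapsto \max_{k} |(A\,p_s+b)_k|$ is continuous, so the supremum is attained at some $\hat p_s \in P^+$; and that the attained value is strictly below $1$ because on $P^+$ a synchronous state exists and the reduced system matrix $J_d(p_s)$ is Hurwitz (see the Remark following Procedure \ref{proc:computationcontrolobjectivefunction} and Lemma \ref{Vaindiff}), which forces $\cos(\arcsin((A\,p_s+b)_k)) \neq 0$, hence $|(A\,p_s+b)_k| < 1$, for every $p_s \in P^+$ and every $k \in \mathbb{Z}_{n_E}$. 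Equivalently, this uniform separation is part of the realistic constraints built into the definition of $P^+$.

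Then I would restrict $g$ to the cube $[0,c]^{n_E} \supseteq h(\ell(P^+))$, on which each component of $g$ is $C^1$ with derivative bounded by $(1-c^2)^{-1/2}$; hence $g|_{[0,c]^{n_E}}$ is Lipschitz with constant $L_g = (1-c^2)^{-1/2}$ (in $\|\cdot\|_\infty$, and therefore in every norm on $\mathbb{R}^{n_E}$ up to dimensional constants). Composing the three bounds gives, for all $p_s, q_s \in P^+$,
\begin{equation*}
\bigl\| f_{as}(p_s) - f_{as}(q_s) \bigr\|
\ \le\ L_g\, \bigl\| h(\ell(p_s)) - h(\ell(q_s)) \bigr\|
\ \le\ L_g\, \bigl\| \ell(p_s) - \ell(q_s) \bigr\|
\ \le\ L_g\, \|A\|_{2,s}\, \| p_s - q_s \| ,
\end{equation*}
so $f_{as}$ is Lipschitz on $P^+$ with constant $L_{f_{as}} = (1-c^2)^{-1/2}\,\|A\|_{2,s}$ (up to the usual constants relating the norms on $\mathbb{R}^{n_E}$).

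The main obstacle is the uniform bound $c < 1$ of the second paragraph; once it is in hand, the rest is a routine chain of Lipschitz compositions. It is precisely at this point that the hypothesis "$p_s \in P^+$" is essential — for an arbitrary $p_s$ merely satisfying $|(A\,p_s+b)_k| \le 1$ the argument of $\arcsin$ could reach $1$ and $f_{as}$ would then be only Hölder-$\tfrac{1}{2}$, not Lipschitz.
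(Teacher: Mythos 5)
Your proof is correct, and it takes a genuinely different route from the paper's. The paper proceeds via the Mean Value Theorem: it first removes the absolute values using the inequality $\|\arcsin(|A\,p_a+b|)-\arcsin(|A\,p_b+b|)\|_2\le\|\arcsin(A\,p_a+b)-\arcsin(A\,p_b+b)\|_2$ (valid since $\arcsin$ is odd and increasing), then writes the difference as $J_e\,(p_a-p_b)$ for a Jacobian $J_e$ evaluated at intermediate points $\zeta_i$, and sets $K_1=(\sum_i\|J_e(i)\|_2^2)^{1/2}$. You instead factor $f_{as}=g\circ h\circ\ell$ and multiply Lipschitz constants, placing the $1$-Lipschitz absolute value \emph{before} $\arcsin$ rather than discarding it first. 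The substantive difference is that you isolate and prove the one step the paper leaves implicit: the constant $K_1$ in the paper depends on the intermediate points $\zeta_i$ and is uniformly bounded only if $\sup_{p_s\in P^+}\max_k|(A\,p_s+b)_k|\le c<1$, since $(1-y^2)^{-1/2}\to\infty$ as $y\to 1$ and $\arcsin$ is merely H\"older-$\tfrac12$ at the endpoints. Your compactness argument supplies exactly this uniform separation, so your version is the more careful one; the only soft spot is that the strict pointwise bound $|(A\,p_s+b)_k|<1$ on $P^+$ is itself a standing modeling assumption of the paper (needed already for $J_d(p_s)$ to be Hurwitz in Lemma \ref{Vaindiff} and for $\arcsin$ to be differentiable) rather than something proved there, so invoking it is legitimate but should be flagged as an assumption rather than derived from the Hurwitz property, which is downstream of it.
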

\begin{proof}
Define a function $e: P^+ \to \mathbb{R}^{n_{E}}$, for a vector $p_s\in P^+,\:e\left(p_s\right) = \arcsin\left(A\:p_s+b\right)$, representing the mean of the phase-angle differences for the entire power network.
Consider two vectors $p_a,\:p_b\in P^+$. For $i\in \mathbb{Z}_{n_E}$, define the vectors $\zeta_i = \theta_i\: p_a + (1-\theta_i)\: p_b$, where $\theta_i\in (0,1)$ , according to the Mean Value Theorem for the function $e$.
The Jacobian matrix of the function $e$, denoted as $J_e$, is defined as:
$J_e=\begin{pmatrix}\nabla e_1\left(\zeta_1\right);& \nabla e_2\left(\zeta_2\right);&\cdots&;\nabla e_{n_E}\left(\zeta_n\right)\end{pmatrix}\in \mathbb{R}^{n_{E}\times \left(n^+-1\right)}$.
Recall the notations, the i-th row of the matrix $J_e$ is denoted as $J_{e}\left(i\right)$,
\begin{eqnarray*}
    \lefteqn{\|f_{as}\left(p_a\right)-f_{as}\left(p_b\right)\|_2}\\
    & =& \|\arcsin\left(|A\:p_a+b|\right)-\arcsin\left(|A\:p_b+b|\right)\|_2\\
    & \leq& \|\arcsin\left(A\:p_a+b\right)-\arcsin\left(A\:p_b+b\right)\|_2\\
    & =& \|J_e\: \left(p_a-p_b\right)\|_2 \leq \left(\sum_{i=1}^{n_E}{\|J_e(i)\|_2^2}\right)^{\frac{1}{2}}\cdot \|p_a-p_b\|_2,\\
    &:=&K_1\cdot \|p_a-p_b\|_2,
\end{eqnarray*}
where the inequality of the third line is according to the Cauchy–Schwarz inequality.
\end{proof}
\begin{proposition}\label{ObjectLC}
The control objective function $f$, 
as defined in Definition~\ref{def:controlobjectivefunction}, 
is Lipschitz continuous on the domain $P^+$.
\end{proposition}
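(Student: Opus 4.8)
The plan is to obtain the result by combining the two Lipschitz estimates already proved, namely Lemma~\ref{VaLC} for $\sigma$ and Lemma~\ref{PhDiLC} for $f_{as}$, together with the two elementary stability properties of Lipschitz functions: a finite linear combination of Lipschitz functions is Lipschitz, and the pointwise maximum of finitely many Lipschitz functions is Lipschitz. No new analytic input is needed beyond the preceding lemmas.

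First I would record the per-component consequences of the two lemmas. If $\sigma$ is Lipschitz on $P^+$ with some constant $c_1$ (in the Euclidean norm on $\mathbb{R}^{n_E}$, as in Lemma~\ref{VaLC}) and $f_{as}$ is Lipschitz on $P^+$ with constant $K_1$ (as in Lemma~\ref{PhDiLC}), then for every $k\in\mathbb{Z}_{n_E}$ and every $p_a,p_b\in P^+$ a single coordinate is bounded by the full Euclidean norm, so $|\sigma_k(p_a)-\sigma_k(p_b)|\le\|\sigma(p_a)-\sigma(p_b)\|_2\le c_1\|p_a-p_b\|_2$ and likewise $|f_{as,k}(p_a)-f_{as,k}(p_b)|\le K_1\|p_a-p_b\|_2$. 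Hence each scalar function $f_k=f_{as,k}+r\cdot\sigma_k$ satisfies $|f_k(p_a)-f_k(p_b)|\le (K_1+r\,c_1)\,\|p_a-p_b\|_2$, i.e. $f_k$ is Lipschitz on $P^+$ with the constant $L:=K_1+r\,c_1$, which is finite because $r\in(0,\infty)$ is a fixed parameter.

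Second, I would pass from the $f_k$ to their maximum. Using the standard inequality $|\max_k a_k-\max_k b_k|\le\max_k|a_k-b_k|$ for finite index sets together with the bound of the previous step,
\[
|f(p_a)-f(p_b)|=\Bigl|\max_{k\in\mathbb{Z}_{n_E}}f_k(p_a)-\max_{k\in\mathbb{Z}_{n_E}}f_k(p_b)\Bigr|\le\max_{k\in\mathbb{Z}_{n_E}}|f_k(p_a)-f_k(p_b)|\le L\,\|p_a-p_b\|_2 ,
\]
so $f$ is Lipschitz on $P^+$ with constant $L$; the constant is uniform in $k$ precisely because the power network has only $n_E\in\mathbb{Z}_+$ lines, so the maximum is taken over a finite set.

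There is no genuine obstacle here: this is a routine closure property of the class of Lipschitz functions, and all the analytic work — the smoothness and gradient bound for $\sigma$, and the arcsine/Cauchy–Schwarz estimate for $f_{as}$ — has already been carried out in Lemmas~\ref{VaLC} and~\ref{PhDiLC}. The only points requiring a line of care are the max-of-finitely-many step and the observation that the modulus $|\cdot|$ inside $m_k=f_{as,k}$ is itself $1$-Lipschitz and therefore cannot spoil the estimate (this is already implicit in the proof of Lemma~\ref{PhDiLC}); I would also note explicitly that the resulting Lipschitz constant $L$ depends only on the fixed system data and on $r$, not on the particular pair of points chosen in $P^+$.
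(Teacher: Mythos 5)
Your proposal is correct and follows essentially the same route as the paper: both combine the Lipschitz estimates of Lemma~\ref{PhDiLC} for $f_{as}$ and Lemma~\ref{VaLC} for $\sigma$ with the nonexpansiveness of the maximum (the paper phrases this as the reverse triangle inequality for $\|\cdot\|_\infty$, which for the nonnegative vectors $(f_k)_k$ is exactly your $|\max_k a_k-\max_k b_k|\le\max_k|a_k-b_k|$), arriving at the same constant $K_1+r\cdot K_2$. No substantive difference.
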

\begin{proof}
Consider two vectors $p_a,p_b\in P^+$. Furthermore, we denote the Lipschitz constant of the function $\sigma$ as $K_2$, which satisfies $\|\sigma\left(p_a\right)-\sigma\left(p_b\right)\|\leq K_2\cdot \|p_a-p_b\|$,
\begin{align*}
  \lefteqn{
            \|f\left(p_a\right)-f\left(p_b\right)\|_2=|f\left(p_a\right)-f\left(p_b\right)|
          }\\
    & =\left | \|\arcsin\left(|A\:p_a+b|\right)+r\cdot\sigma\left(p_a\right)\|_{\infty}-\|\arcsin\left(|A\:p_b+b|\right)+r\cdot\sigma\left(p_b\right)\|_{\infty}\right|\\
    & \leq\|\arcsin\left(|A\:p_a+b|\right)+r\cdot\sigma\left(p_a\right)-\arcsin\left(|A\:p_b+b|\right)-r\cdot\sigma\left(p_b\right)\|_{\infty}\\
    & \leq \|\arcsin\left(|A\:p_a+b|\right)-\arcsin\left(|A\:p_b+b|\right)\|_{\infty}+\|r\cdot\sigma\left(p_a\right)-r\cdot\sigma\left(p_b\right)\|_{\infty}\\
    & \leq K_1 \cdot \|p_a-p_b\|_2+\|r\cdot\sigma\left(p_a\right)-r\cdot\sigma\left(p_b\right)\|_{\infty}\leq K_1  \cdot\|p_a-p_b\|_2+r\cdot K_2\cdot \|p_a-p_b\|_{\infty}\\
    & \leq K_1 \cdot \|p_a-p_b\|_2+r\cdot K_2\cdot \|p_a-p_b\|_{2}
      =(K_1+r\cdot K_2)\cdot\|p_a-p_b\|_{2}~ :
      = G\cdot\|p_a-p_b\|_{2}.
\end{align*}
\end{proof}
\subsection{A property of the Lipschitz continuous function}\label{prooffun}
We establish a property of the Lipschitz continuous function, demonstrating the Step $2$ of Theorem \ref{existence}
which in turn, leads to the derivation of Steps 3 and 4 of Theorem \ref{existence}.
\begin{proposition}\label{ComconSet}
Consider a function $ f:\mathbb{R}^{m}\rightarrow \mathbb{R}^{n}$ 
which is Lipschitz continuous. 
If the domain of definition of $f$ denoted by $P^+$, is a compact convex set, 
then the value range of $f$ denoted by $\Range\left(f\right)$ is also a compact convex set. 
\end{proposition}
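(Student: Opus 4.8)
The plan is to split the claim into its two halves — compactness of $\Range(f)$ and convexity of $\Range(f)$ — and dispose of each with elementary point-set topology, while being careful about what the hypotheses actually deliver when the codomain is not $\mathbb{R}$.

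First I would establish compactness. Since $f$ is Lipschitz continuous it is in particular continuous, and $P^+$ is compact by hypothesis (in fact a polytope, by Proposition~\ref{thmdomain}). The continuous image of a compact set is compact, so $\Range(f) = f(P^+)$ is a compact subset of $\mathbb{R}^n$, i.e. closed and bounded. No estimate beyond ``Lipschitz $\Rightarrow$ continuous'' is needed here.

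Next, convexity. The natural step is to use that a convex set is path-connected: for any $p_a, p_b \in P^+$ the segment $t \mapsto (1-t)\, p_a + t\, p_b$ stays in $P^+$, so $t \mapsto f\bigl((1-t)\, p_a + t\, p_b\bigr)$ is a continuous path in $\Range(f)$ joining $f(p_a)$ to $f(p_b)$; hence $\Range(f)$ is path-connected, therefore connected. In the scalar case $n = 1$ this finishes the argument: a connected subset of $\mathbb{R}$ is an interval, and an interval that is also compact is a closed bounded interval $[a, c]$ with $-\infty < a \leq c < \infty$ — which is convex. This is exactly the form in which Steps~3 and~4 of Theorem~\ref{existence} invoke the result, applied to the real-valued control objective function $f$, and combined with the extreme value theorem it yields the minimizer.

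The point to watch — and the only genuine ``obstacle'' — is that for $n \geq 2$ convexity of $\Range(f)$ is false under these hypotheses alone: for instance $f : [0, 2\pi] \to \mathbb{R}^2$, $f(t) = (\cos t, \sin t)$, is Lipschitz on a compact convex domain yet its range is a circle. What the Lipschitz-plus-compact-convex hypotheses truly buy in general is that $\Range(f)$ is compact and connected; the extra conclusion ``convex'' needs $n = 1$ (or an additional structural assumption on $f$). Since the sole use of Proposition~\ref{ComconSet} is for the scalar control objective function, I would either state the proposition with $n = 1$ or add a remark that the convexity assertion is meant in that case; the compactness half is unconditional. Modulo that caveat the whole proof reduces to three standard facts — continuous image of compact is compact, continuous image of connected is connected, and a connected compact subset of $\mathbb{R}$ is a closed bounded interval — together with the observation that convex sets are path-connected.
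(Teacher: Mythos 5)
Your proof is correct, but it takes a genuinely different route from the paper's. The paper argues convexity by contradiction: it supposes some convex combination $y = \theta\, f(p_a) + (1-\theta)\, f(p_b)$ is not attained, then invokes the Lipschitz bound $\|f(p_b)-f(p_a)\|_2 \leq G\,\|p_b-p_a\|_2$ and claims that ``the distance between $p_a$ and $p_b$ can be made arbitrarily small'' so that $f(p_a)$ and $f(p_b)$ both land in a ball $B(y, G\delta)$ containing no point of the range — a contradiction. That argument is shaky: $p_a$, $p_b$, and $\theta$ are fixed at the outset (they determine $y$), so one is not free to shrink $\|p_a - p_b\|$ afterwards, and the step asserting that continuity forces $B(y, G\delta)$ to be disjoint from the range does not follow from the stated hypotheses. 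Your route — continuous image of a compact set is compact, continuous image of a path-connected set is connected, and a connected compact subset of $\mathbb{R}$ is a closed bounded interval, hence convex — is the standard clean argument and avoids these difficulties entirely. Moreover, your circle example $f(t) = (\cos t, \sin t)$ on $[0, 2\pi]$ is a genuine counterexample to the proposition as stated for codomain dimension $n \geq 2$, which explains why no proof in that generality could succeed; only the Lipschitz property is irrelevant to what is true (mere continuity suffices for both compactness and connectedness), and only the scalar case $n=1$ is ever used in Steps 3 and 4 of Theorem~\ref{existence}. Your suggestion to restate the proposition with $n=1$ is the right fix.
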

\begin{proof}
 If the domain of a continuous function is compact, then the value range of that function is also compact, indicating the compactness of $\Range\left(f\right)$. To establish its convexity, we begin by examining its converse proposition, which can be articulated as follows: Consider two vectors $p_a$ and $p_b$, belonging to the domain $P^+$. There exists a $y\in \mathbb{R}^{n}$ and a $\theta\in\left(0,1\right)$ which satisfies $y=\theta\cdot f\left(p_a\right)+\left(1-\theta\right)\cdot f\left(p_b\right)$, such that $f\left(p_s\right)\:\neq \:y, \forall \:p_s\:\in P^+$.

For this vector $y$, it satisfies $\|y-f\left(p_b\right)\|_2=\theta\cdot\|f\left(p_a\right)-f\left(p_b\right)\|_2$ and $\|y-f\left(p_a\right)\|_2=\left(1-\theta\right)\cdot\|f\left(p_b\right)-f\left(p_a\right)\|_2$. Leveraging the Lipschitz continuity property of the function $f$, we can derive $\|f\left(p_b\right)-f\left(p_a\right)\|_2\leq G\cdot\|p_b-p_a\|_2$, with $G\in \mathbb{R}_{s,+}$ denoted as the Lipschitz constant as in Proposition \ref{ObjectLC}. Moreover, owing to the convexity of the domain $P^+$, the distance between vectors $p_a$ and $p_b\in P^+$, denoted as $\|p_b-p_a\|_2$, can be made arbitrarily small. By setting this distance less than a small number $\delta\in\mathbb{R}_{s+},\:\|p_b-p_a\|_2<\delta$, we have $\|f\left(p_b\right)-f\left(p_a\right)\|_2\leq G\cdot\delta$. Consequently, $\|y-f\left(p_a\right)\|_2\leq\left(1-\theta\right)\cdot G\cdot\delta$ and $\|y-f\left(p_b\right)\|_2\leq\theta\cdot G\cdot\delta$.

As a result, the two vectors $f\left(p_a\right),\:f\left(p_b\right)\in \mathbb{R}^{n}$ are inside a spherical neighborhood denoted by $B\left(y,\:G\cdot\delta\right)$, with a center at $y$ and a radius of $G\cdot\delta$. However, since $\forall\:p_s\in P^+,\:f\left(p_s\right)\neq y$, the continuity property of the function $f$ ensures that there is no $y'\in \Range\left(f\right)$ such that $y'\in B\left(y,\:G\cdot\delta\right)$. This contradicts the fact that $\Range\left(f\right)$ contains at least two vectors, $f\left(p_a\right)$ and $f\left(p_b\right)$ in this spherical neighborhood. Therefore, the converse proposition is incorrect, establishing the convexity of the value range $\Range\left(f\right)$.
\end{proof}
\section{Convexity and differentials of the control objective function}\label{sec:convexityanddifferential}
\subsection{Convexity of the control objective function}\label{basicproperty}
Is the control objective function convex and differentiable? 
Readers interested in exploring optimization and convexity
can refer to the books
\cite{rockafellar1970convex,rockafellar:wets:2009,boyd2004convex,nesterov2018lectures}. 
%
\begin{lemma}\label{gconvex}
The function 
$f_{as,k}:P^+\rightarrow \mathbb{R}_+,\forall~ k\in \mathbb{Z}_{n_{E}}$, is convex on $P^+$. 
\end{lemma}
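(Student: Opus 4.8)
The plan is to exhibit $f_{as,k}$ as the composition of a scalar convex nondecreasing function with a convex function, and then to invoke the standard composition rule for convexity (see, e.g., \cite{boyd2004convex}). Concretely, fix $k\in\mathbb{Z}_{n_E}$ and set $g_k:P^+\to\mathbb{R}$, $g_k(p_s)=|A(k)\,p_s+b_k|$, together with $\phi:[0,1]\to\mathbb{R}_+$, $\phi(t)=\arcsin(t)$, so that $f_{as,k}=\phi\circ g_k$ by Definition~\ref{def:controlobjectivefunction}. The proof then splits into verifying the hypotheses of the composition rule for this factorization.

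The first step I would carry out is to check that $g_k$ is convex on $P^+$. The map $p_s\mapsto A(k)\,p_s+b_k$ is affine, the absolute value $|\cdot|:\mathbb{R}\to\mathbb{R}_+$ is convex, and the composition of a convex function with an affine map is convex; since $P^+$ is convex by Proposition~\ref{thmdomain}, $g_k$ is convex on $P^+$. I would also record, for well-posedness of the composition, that $g_k$ takes values in $[0,1]$: the entries of $A\,p_s+b$ are the sines of the phase-angle differences, which lie in the critical subset $(-\pi/2,+\pi/2)$, so $|A(k)\,p_s+b_k|\le 1$ on $P^+$, exactly as is needed for $\arcsin$ to be defined there.

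The second step is to verify that $\phi=\arcsin$ is nondecreasing and convex on $[0,1]$: for $t\in[0,1)$ one has $\phi'(t)=(1-t^2)^{-1/2}>0$ and $\phi''(t)=t\,(1-t^2)^{-3/2}\ge 0$, and $\phi$ extends continuously to $t=1$, so $\phi$ is convex and nondecreasing on $[0,1]$. Applying the composition rule — if $\phi$ is convex and nondecreasing on an interval containing the range of a convex function $g_k$, then $\phi\circ g_k$ is convex — gives that $f_{as,k}=\phi\circ g_k$ is convex on $P^+$, which is the claim.

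The step requiring the most care, and in that sense the main obstacle, is recognizing why the absolute value inside the $\arcsin$ is essential: $\arcsin$ is \emph{not} convex on all of $[-1,1]$, being concave on $[-1,0]$, so one cannot simply compose $\arcsin$ with the affine map $p_s\mapsto A(k)\,p_s+b_k$. The absolute value folds the argument into $[0,1]$, precisely the interval on which $\arcsin$ is both convex and nondecreasing. The possible kink of $g_k$ at points where $A(k)\,p_s+b_k=0$ is harmless, because $|\cdot|$ composed with an affine map is a genuine convex function there and the composition rule does not require differentiability; one should simply avoid any argument that tries to use $\phi''$ or the Hessian of $f_{as,k}$ across that set.
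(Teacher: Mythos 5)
Your proof is correct, but it takes a genuinely different route from the paper. The paper argues via the Hessian: it computes $\nabla^2 f_{as,k}(p_s) = \left(1-(A(k)p_s+b_k)^2\right)^{-3/2}\cdot|A(k)p_s+b_k|\cdot A(k)^{\top}A(k)$ on the generic subset where $A(k)p_s+b_k\neq 0$, observes that this matrix is positive semidefinite, and concludes convexity from there. You instead factor $f_{as,k}=\phi\circ g_k$ with $g_k=|A(k)\cdot{}+b_k|$ convex (absolute value of an affine map) and $\phi=\arcsin$ convex and nondecreasing on $[0,1]$, and invoke the composition rule. Your route buys something real: the paper's Hessian computation is silent on the algebraic set $\{p_s : A(k)p_s+b_k=0\}$, where $f_{as,k}$ is not differentiable, and positive semidefiniteness of the Hessian away from a kink does not by itself certify convexity across it; the composition rule needs no differentiability and covers the whole domain in one step. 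Your closing remark about why the absolute value is essential --- $\arcsin$ being concave on $[-1,0]$ --- is also a point the paper does not make explicit. The only mild caveat is that your claim $|A(k)p_s+b_k|\le 1$ on $P^+$ is an implicit well-posedness assumption already built into Definition~\ref{def:controlobjectivefunction} rather than something proved in the paper, so you are not adding a hypothesis, merely surfacing one.
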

\begin{proof}
The function $f_{as,k}$ for a vector $p_s\in P^+$, defined as $f_{as,k}\left(p_s\right) = \arcsin \left(|(A \:p_s + b)_k|\right)$\\$= \arcsin \left(|A\left(k\right)\:p_s + b_k|\right)$. 
This function is differentiable 
except on the subset \\$\left\{p_s\in P^+ : A(k)\: p_s + b_k = 0\right\}\subseteq P^+$. 
The expression of Hessian matrix of $f_{as,k}$ for a vector 
$p_s \in \left\{p_s\in P^+ : A(k) \:p_s + b_k\neq 0\right\}$ 
is given by
$\nabla^2f_{as,k}(p_s) = \left(1 - (A(k) \:p_s + b_k)^2\right)^{-3/2}\cdot |A(k) \:p_s + b_k| 
 \cdot A(k)^\top A(k)$, 
which is always positive definite, though not necessarily strictly positive definite. 
This positive definiteness establishes the convexity property of the function $f_{as,k},\:\forall~ k\in \mathbb{Z}_{n_{E}}$.
\end{proof}
\begin{lemma}\label{snotconvex}
In general, 
each component of the standard deviation of the phase-angle differences, 
denoted as $\sigma_k: P^+ \to \mathbb{R}_{s,+}^{n_{E}},\:\forall\:k\in \mathbb{Z}_{n_{E}}$, 
is not convex.
\end{lemma}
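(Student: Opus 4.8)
The plan is to prove the lemma by exhibiting an explicit admissible instance of the problem data for which some component $\sigma_k$ violates the midpoint convexity inequality; since the statement only asserts non-convexity \emph{in general}, one valid counterexample suffices, and producing it also exposes the structural source of the non-convexity, namely the Lyapunov equation. First I would recall from Lemma \ref{Vaindiff} and formula (\ref{expressionofsigma}) that, after the reduction, $V_k(p_s) = \sigma_k(p_s)^2$ is the $(k,k)$ entry of $C_d(p_s)\,X(p_s)\,C_d(p_s)^\top$, where $X(p_s)$ is the unique solution of the Hurwitz Lyapunov equation $J_d(p_s)X + X J_d(p_s)^\top + K_d(p_s)K_d(p_s)^\top = 0$. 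Since the Lyapunov operator $X \mapsto J_d X + X J_d^\top$ is linear and invertible (as $J_d$ is Hurwitz), Cramer's rule shows that $X(p_s)$, hence $V_k$ and $\sigma_k = V_k^{1/2}$, is a rational function of the entries of $J_d(p_s), K_d(p_s), C_d(p_s)$; and those entries depend on $p_s$ only through the coupling weights $w_j\cos(\arcsin((A p_s + b)_j)) = w_j\sqrt{1-(A p_s+b)_j^2}$, i.e. through an affine map of $p_s$ followed by the strictly concave function $t\mapsto\sqrt{1-t^2}$. Thus $\sigma_k$ is genuinely non-affine and there is no a priori reason for convexity.

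Next I would reduce to one dimension: fix $p_0\in P^+$ and a direction $v\ne 0$ such that $p_0+tv\in P^+$ for $t$ in an interval $[t_a,t_c]$, and set $\varphi(t)=\sigma_k(p_0+tv)$, which is $C^\infty$ by Lemma \ref{Vaindiff}. It then suffices to find, for a single admissible choice of $G, M, D, B, W, K_2, p^{+,max}, p^-, r$ and one index $k$, either a point $t^\star$ in the open interval with $\varphi''(t^\star)<0$, or three collinear points $t_a<t_b<t_c$ with $\varphi(t_b) > \frac{t_c-t_b}{t_c-t_a}\varphi(t_a) + \frac{t_b-t_a}{t_c-t_a}\varphi(t_c)$. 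The quantity $\varphi''$ is obtained by differentiating the Lyapunov equation twice along $v$ — exactly the implicit-differentiation computation used for the Hessian of $\sigma_i$ in Section \ref{sec:algorithms} — or, more cheaply, one just evaluates $\varphi$ at the three points.

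For the concrete instance I would not use a symmetric two-bus network: there $\sigma_k$ reduces to a constant multiple of $(1-(A p_s+b)^2)^{-1/4}$, which is convex, so a counterexample requires richer data — for instance asymmetric inertia or damping, or a network of at least three buses, in which the variance of one line genuinely depends on the couplings of the remaining lines. I would therefore take a small network such as the three-bus line $1$--$2$--$3$ with all buses generating (so $p_s\in\mathbb{R}^2$), or the academic example of Section \ref{sec:examples}, fix numerical values of $M, D, W, K_2, p^{+,max}, p^-, r$, pick a slice inside $P^+$, and certify the strict violation of the convexity inequality (or $\varphi''(t^\star)<0$) by direct computation. Any such verified instance establishes that $\sigma_k$ is not convex, which is the assertion of the lemma.

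The obstacle I anticipate is computational, not conceptual: for every network beyond the trivial symmetric two-bus case the closed form of $V_k$ is an algebraically heavy rational expression, so the delicate point is choosing the data and the slice so that non-convexity is simultaneously present and transparently checkable. In practice it is cleanest to certify the counterexample numerically — evaluating $\sigma_k$ at three collinear points of $P^+$ and exhibiting the strict violation of $\sigma_k(\theta p_a + (1-\theta)p_b)\le \theta\sigma_k(p_a)+(1-\theta)\sigma_k(p_b)$ — which already suffices to conclude that $\sigma_k$ is, in general, not convex.
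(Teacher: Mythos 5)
Your proposal takes essentially the same route as the paper: the paper's justification (the Example following the lemma) likewise establishes non-convexity by numerically computing the Hessian of $\sigma_k$ via the directional-derivative/perturbed-Lyapunov-equation machinery of Section \ref{gradientandHessianofS} and observing a strictly negative eigenvalue, i.e.\ a concrete instance with negative curvature. Your added observations --- that a three-point violation of midpoint convexity is a cheaper certificate, and that the symmetric two-bus case is convex so a richer network is needed --- are sensible refinements but do not change the argument.
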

\begin{example}
Note that whether the standard deviation of the phase-angle difference of power line $k$, denoted as $\sigma_k$, is convex or not cannot be conclusively determined from Equation (\ref{expressionofsigma}). To prove Lemma \ref{snotconvex}, we have developed an algorithm to compute the gradient vector and the Hessian matrix of the function $\sigma_k$ numerically by using the directional derivative method. The algorithm can be found in Section \ref{gradientandHessianofS}.
\par
In our computational analysis, we have examined several cases, and our findings indicate that the standard deviation $\sigma_k,\:\forall\:k\in \mathbb{Z}_{n_{E}}$ is not convex in all cases. This is primarily due to the presence of both strictly positive and strictly negative eigenvalues in the Hessian matrix.
\end{example}
Consequently, the control objective function $f$ on $P^+$ can not be guaranteed to be convex, which means it may possess multiple local minimizers. To ensure that each local minimizer of the control objective function is isolated, we introduce the following assumption.
\begin{assumption}\label{assumption:noneighborhoodzero}
There does not exists an open neighborhood $\mathcal{O}\subset P^+$ for which the control objective function $f$ remains constant on this neighborhood.
\end{assumption}

\subsection{A partition of the set of power supply vectors}\label{sec:partition}
The definition of the partition is preceeded
by concepts of algebraic geometry.
\par
In geometry one describes surfaces.
Ways to specify surfaces include
hyperplanes described by affine functions;
algebraic sets described by polynomials; and
other surfaces described by functions
which are not polynomials.
For power systems, all three cases appear.
Terminology of a surface described by a polynomial follows,
\cite[Ch.1, paragraph 2, Def. 1]{Cox1992}.
\par
Consider a polynomial in $n \in \mathbb{Z}_+$ indeterminates
of degree $d \in \mathbb{Z}_+$ as an algebraic object
\begin{eqnarray*}
    &   & \forall ~ n \in \mathbb{Z}_+, ~
          \forall ~ d \in \mathbb{Z}_+, ~
          \forall ~ k \in \mathbb{N}_d^n \left(k\in  \mathbb{R}^n, k(i)\in  \mathbb{N}_d\right) ~
          \mbox{define the monomial,} \\
        p^k
    & = & \prod_{i=1}^n ~ p_i^{k(i)}; ~
          \mbox{define the polynomial for any finite subset} ~
          \mathbb{N}_s \subset \mathbb{N}_d^n, \\
        q(p)
    & = & \sum_{k \in \mathbb{N}_s} ~ c(k) ~ p^k \in 
          \mathbb{R}[p_1, ~ p_2, ~ \ldots, ~ p_n ], ~
          \mbox{where, } ~ \forall ~ k\in \mathbb{N}_s, ~ c(k) \in \mathbb{R}; \\
    &   & \forall ~ Q_s \subset 
          \mathbb{R}[p_1, ~ p_2, ~ \ldots, ~ p_n ], ~
          \mbox{a finite subset of polynomials, define,} \\
        V(\mathbb{R}^n, ~ Q_s)
    & = & \left\{
            p \in \mathbb{R}^n |~ \forall ~ q \in Q_s, ~ q(p) = 0 
          \right\}.
\end{eqnarray*}
Call $V(\mathbb{R}^n, ~ Q_s)$
an {\em algebraic set} or an {\em affine variety}.
Call a subset $G \subseteq \mathbb{R}^n$
a {\em generic subset} of $\mathbb{R}^n$
if $\mathbb{R}^n \backslash G$ is an algebraic set.
\par
This is now applied to the control objective function.
Define,
            $f_{lin,k}\left(p_s\right)
	= | A\left(k\right)\:p_s+ b_k|, ~
	      \forall ~ k \in \mathbb{Z}_{n_{E}}.$
Recall the notation, $A\left(k\right)$ denotes the k-th row of matrix $A$ and $A\left(k\right) p_s+ b_k$ is an affine function of $p_s$,
thus a polynomial, hence
$V(\mathbb{R}^{n^+-1}, ~f_{as,k})$ is an algebraic set.
Note that, for $x \in (-1, ~ +1)$,
$\arcsin(x) = 0$ if and only if $x=0$.
However,
$f_k\left(p_s\right) = \left[\arcsin\left(|A\left(k\right) p_s + b_k|\right)\right] + r\cdot\sigma_k\left(p_s\right)$ defined in
Def. \ref{def:controlobjectivefunction}
is not a polynomial because
the function arcsin is not a polynomial.
It can be proven that $\sigma_k(p_s)$ is a polynomial 
in terms of the components of $p_s$.
\begin{definition}\label{def:cases}
Consider the control objective function of
Def.~\ref{def:controlobjectivefunction}.
Distinguish the cases:
\begin{itemize}
\item
Case 1.
there exists a unique $k \in \mathbb{Z}_{n_E}$ and
there exists a nonempty subset $P_{(k)}^+$ such that 
$P_{\left(k\right)}^+ = \{ p_s \in P^+ |~ f\left(p_s\right) = f_k\left(p_s\right) \}$.
\item
Case 1.1. 
there exists a unique $k \in \mathbb{Z}_{n_E}$ and
there exists a nonempty subset $P_{(k),nz}^+$ such that
$P_{\left(k\right),nz}^+ 
= \{ p_s \in P_{\left(k\right)}^+ |~ f\left(p_s\right) = f_k\left(p_s\right), ~ f_{as,k}\left(p_s\right) \neq 0 \}$.
\item
Case 1.2. 
there exists a unique $k \in \mathbb{Z}_{n_E}$ and
there exists a nonempty subset $P_{\left(k\right),z}^+$ such that
$P_{\left(k\right),z}^+ 
= \{ p_s \in P_{\left(k\right)}^+ |~ f\left(p_s\right) = f_k\left(p_s\right), ~ f_{as,k}\left(p_s\right) = 0 \}$.
\item
Case 2.
there exists two or more
$k_1, ~ k_2, ~ \ldots, ~ k_m \in \mathbb{Z}_{n_E}$
and there exists a nonempty subset 
$P_{\left(k_1, k_2, \ldots, k_m\right)}^+$
such that\\
$P_{\left(k_1, k_2, \ldots, k_m\right)}^+
= \{ p_s \in P^+ |~ f\left(p_s\right) = f_{k_1}\left(p_s\right) = f_{k_2}\left(p_s\right) = \ldots = f_{k_m}\left(p_s\right) \}$.\\
Denote by
$I_{max}\left(p_s\right) = \{ k_1, ~ k_2, ~ \ldots, ~ k_m \}$
the subset of those integers.
\end{itemize}
\end{definition}
The subset
$P_{\left(k_1,k_2\right)}^+$ 
may not be an algebraic set
because the relation 
$f_{k_1} \left(p_s\right)= f_{k_2}\left(p_s\right)=\left[\arcsin\left(A(k_2)\:p_s+b_{k_2}\right)\right]+r\cdot\sigma_{k_2}\left(p_s\right)$ is not a polynomial in general.
It is clear from the formulas,
that Case 1.1 is the generic case and that Case 1.2 takes place on an algebraic set.
\subsection{The first directional derivative of the control objective function}\label{firstorderdiff}
Books about the directional derivatives
and related concepts include
\cite{rockafellar:wets:2009,nesterov2018lectures,cui2021modern}.
\begin{definition}\label{def.firstorderderivative}
\cite[Def. 3.1.3]{nesterov2018lectures}.
\par
Consider integers $m, ~ n \in \mathbb{Z}_+$,
a convex and open subset $U \subseteq X = \mathbb{R}^n$, and
a function $g: U \rightarrow \mathbb{R}^m$.
Assume that:
(1) the function $g$ is continuous on its domain of definition $U$; and
(2) there does not exist an open subset $O \subseteq U$
on which the function $g$ is constant.
\par
One says that the function $g$ is 
{\em directionally differentiable at $x_s \in U$
in the direction $v \in \mathbb{R}^n$} 
if there exists a linear map
$L: \mathbb{R}^n \rightarrow \mathbb{R}^m$
such that the following limit exists,
\begin{eqnarray}
        L\left(v\right)
    & = & \lim_{t \in \mathbb{R}_{s+}, ~ t \downarrow 0} ~
          \frac{g\left(x_s + t \cdot v\right) - g\left(x_s\right)}{t}, ~
          \Leftrightarrow ~ \nonumber \\
        0
    & = & \lim_{t \in \mathbb{R}_{s+}, ~ t \downarrow 0} ~
          \frac{g\left(x_s + t  \cdot v\right) - g\left(x_s\right) - t  \cdot L\left(v\right)}{t}; ~
          \mbox{denote then,} \\
        dg\left(x_s, ~ v\right)
    & = & L\left(v\right), ~ \forall ~ v \in \mathbb{R}^n.
\end{eqnarray}
Call then $dg\left(x_s, ~ v\right)$ the
{\em directional differential of $g$ at $x_s$ in the direction $v$}.
\par
If a basis of the vector space $\mathbb{R}^n$ has been chosen 
to be $\left\{ e_1, ~  e_2, ~ \ldots, ~ e_n \right\}$,
the set of the Euclidian unit vectors, 
then the linear map can be represented by the
Jacobian matrix, 
\begin{eqnarray*}
               J_g\left(x_s\right)
    & = & g'\left(x_s\right) =
          \begin{bmatrix}
            L\left(e_1\right) & L\left(e_2\right) & \ldots & L\left(e_n\right)
          \end{bmatrix}
          \in \mathbb{R}^{m \times n}; ~
          \mbox{equivalently,} \\
        J_g\left(x_s\right)_{i,j}
    & = & \lim_{t \in \mathbb{R}_{s+}, ~ t \downarrow 0} ~
          \frac{g_i\left(x_s + t  \cdot e_j\right) - g_i\left(x_s\right)}{t}, ~
          \forall ~ i \in \mathbb{Z}_m, ~ \forall ~ j \in \mathbb{Z}_n; ~
          \mbox{then} \\
        dg\left(x_s, ~ v\right)
    & = & L\left(v\right) = J_g\left(x_s\right) ~ v = g'\left(x_s\right) ~ v, ~
          \forall ~ v \in \mathbb{R}^n.
\end{eqnarray*}
Call the matrix $J_g\left(x_s\right)$
the {\em Jacobian matrix} of $g$ at $x_s \in U$.
\end{definition}
In general, for Case 1.2 and Case 2,
the Jacobian matrix does not exist
for the control objective function
of Def.~\ref{def:controlobjectivefunction}.
Even if the Jacobian matrix for a particular direction exists
then the Jacobian matrix can be different for another direction.
Fro Case 2, the directional differential $df\left(p_s, ~ v\right)$
will not be a continuous function of the direction vector $v$.
A further formalization of a sectorwise directional derivative,
is not stated in this paper. 
\begin{proposition}\label{prop:firstdirectionalderivative}
Consider the control objective function of
Def.~\ref{def:controlobjectivefunction}.
\begin{itemize}
\item[(a)]
Case 1.1.
For any $p_s \in P_{\left(k\right),nz}^+ $,
and a direction vector $v \in \mathbb{R}^{n^+-1}$, 
such that $p_s+v \in P^+$,
the  first directional derivative of the
control objective function exists and equals,
\begin{eqnarray*}
    \lefteqn{
      f'\left(p_s, ~ v\right) = f_k'\left(p_s,v\right)=\nabla f\left(p_s\right) v=\left(\nabla f_{as,k}\left(p_s\right)+r \cdot\nabla\sigma_k\left(p_s\right)\right) v
    } \\
      & = &\left\{
	    \begin{array}{ll}
           \left[ \left(
              1 - \left(A\left(k\right) \:p_s + b_k\right)^2 
            \right)^{-\frac{1}{2}}\cdot
            A\left(k\right) + r \cdot\nabla\sigma_k\left(p_s\right)\right]v, 
            & \mbox{if} \:A\left(k\right)~ p_s + b_k > 0, \\
          \\
           \left[ -
	    \left(
              1 - \left(A\left(k\right) p_s + b_k\right)^2 
            \right)^{-\frac{1}{2}}\cdot
            A\left(k\right) + r\cdot \nabla\sigma_k\left(p_s\right)\right]v, 
	    & \mbox{if} ~A\left(k\right)~p_s + b_k < 0.
	    \end{array}
	    \right. \nonumber
\end{eqnarray*}
\item[(b)]
Case 1.2.
For any $p_s \in P_{(k),z}^+$,
and a direction vector $v \in \mathbb{R}^{n^+-1}$, 
such that $p_s+v \in P^+$,
the first directional derivative of the
control objective function exists and equals,
\begin{eqnarray*}
    f'\left(p_s, ~ v\right)
    & = & f_k'\left(p_s\right)
          = | A\left(k\right) ~ v | + r \cdot\nabla\sigma_k\left(p_s\right)\:v 
                \end{eqnarray*}
Note that in this case, 
the Jacobian matrix depends on the direction vector $v$.
\item[(c)] \cite[Ch. 4, p. 157]{cui2021modern}.
Case 2.
For any $p_s \in P_{\left(k_1,k_2,\cdots,k_m\right)}^+$,
and a direction vector $v \in \mathbb{R}^{n^+-1}$, 
such that $p_s+v \in P^+$, the first directional derivative of the
control objective function exists and is equal to,
$$f'\left(p_s, ~ v\right) = \max_{k \in I_{max}\left(p_s\right)} ~ f_k'\left(p_s, ~ v\right)$$
Note that $f_k'\left(p_s,v\right)$ takes the expression of Case 1.1 or Case 1.2, depending on whether  $A\left(k\right) p_s + b_k$ is nonzero or zero, respectively.
\end{itemize}
\end{proposition}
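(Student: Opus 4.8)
The plan is to dispatch the three cases in order, using the continuity of the finitely many functions $f_j$ to reduce $f$ locally (on a one‑sided segment) to a single $f_k$ or to a maximum over the active indices, and then reducing the limit to elementary one‑dimensional computations. Throughout, convexity of $P^+$ (Proposition~\ref{thmdomain}) guarantees that $p_s + t v = (1-t)p_s + t(p_s+v) \in P^+$ for $t \in [0,1]$, so the segment on which the difference quotient is evaluated is feasible.

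For Case~1.1 I would note that $k$ being the \emph{unique} index with $f(p_s) = f_k(p_s)$ means $f_k(p_s) > f_j(p_s)$ for all $j \neq k$; by continuity of each $f_j$ (Proposition~\ref{ObjectLC}, Lemma~\ref{Vaindiff}) there is an open neighborhood $\mathcal{O}$ of $p_s$ on which $f \equiv f_k$. Since $A(k)\,p_s + b_k \neq 0$, after shrinking $\mathcal{O}$ the sign of $A(k)\,p_s' + b_k$ is constant on $\mathcal{O}$, so $f_{as,k}$ equals $\arcsin(\pm(A(k)\,p_s' + b_k))$ with a fixed sign and argument in $(-1,1)$, hence $C^\infty$ on $\mathcal{O}$; with $\sigma_k \in C^\infty(P^+)$ (Lemma~\ref{Vaindiff}) the function $f = f_k$ is differentiable at $p_s$, and the stated formula follows from the chain rule using $\tfrac{d}{du}\arcsin(u) = (1-u^2)^{-1/2}$. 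For Case~1.2 the same continuity argument again gives a neighborhood on which $f \equiv f_k$, but now $A(k)\,p_s + b_k = 0$, so $f_{as,k}(p_s)=0$ and $f_{as,k}(p_s + t v) = \arcsin\!\left(t\,|A(k)\,v|\right)$ for all sufficiently small $t>0$; dividing by $t$ and using $\lim_{x\downarrow 0}\arcsin(x)/x = 1$ yields the directional derivative $|A(k)\,v|$ of $f_{as,k}$, and adding the directional derivative $\nabla\sigma_k(p_s)\,v$ of the smooth term $r\,\sigma_k$ gives the claim; the appearance of $|A(k)\,v|$ shows the directional derivative is not linear in $v$, i.e. no single Jacobian exists at such $p_s$.

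For Case~2, for every $j \notin I_{max}(p_s)$ one has $f_j(p_s) < f(p_s)$, so by continuity $f_j(p_s + t v) < \max_{i} f_{k_i}(p_s + t v)$ for all small $t \geq 0$; hence $f(p_s + t v) = \max_{i \in \{1,\dots,m\}} f_{k_i}(p_s + t v)$ near $t = 0^+$. Since $f_{k_i}(p_s) = f(p_s)$ for each active index,
\[
\frac{f(p_s + t v) - f(p_s)}{t} = \max_{i} \frac{f_{k_i}(p_s + t v) - f_{k_i}(p_s)}{t},
\]
and by Cases~1.1--1.2 each of the finitely many quotients on the right converges as $t \downarrow 0$ to $f_{k_i}'(p_s, v)$ (with the Case~1.1 or Case~1.2 expression according to whether $A(k_i)\,p_s + b_{k_i}$ is nonzero or zero). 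Since the maximum of finitely many convergent sequences converges to the maximum of their limits, $f'(p_s, v) = \max_{k \in I_{max}(p_s)} f_k'(p_s, v)$, which is the asserted ``max rule'' (cf. \cite[Ch.~4, p.~157]{cui2021modern}).

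The main obstacle I expect is bookkeeping rather than depth: carefully justifying the \emph{local} reduction $f \equiv f_k$ in Cases~1.1--1.2 and $f = \max_i f_{k_i}$ in Case~2 on a one‑sided segment, and ensuring the sign of $A(k)\,p_s + b_k$ (or its exact vanishing) is handled uniformly for all small $t$. The only genuinely non‑routine ingredient is the $\arcsin$ asymptotic at the origin, which is what produces the nonsmooth term $|A(k)\,v|$ in Case~1.2 and, via the active indices, in Case~2.
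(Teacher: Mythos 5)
Your proof is correct and, for Cases 1.1 and 1.2, follows essentially the same route as the paper: a local reduction of $f$ to the single active $f_k$, the chain rule with the constant sign of $A(k)\,p_s+b_k$ in Case 1.1, and the direct evaluation of the difference quotient of $\arcsin(|\tau\, A(k)v|)$ in Case 1.2 (the paper invokes L'Hospital's rule where you use $\arcsin(x)/x\to 1$; these are interchangeable). You are somewhat more explicit than the paper about why $f\equiv f_k$ on a neighborhood, which is a welcome addition rather than a deviation.

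The one genuine difference is Case 2: the paper does not prove the max rule at all and simply cites \cite[Ch.~4, p.~157]{cui2021modern}, whereas you give a short self-contained Danskin-type argument (the quotient of the max equals the max of the quotients because all active indices share the common value $f(p_s)$, and the maximum of finitely many convergent sequences converges to the maximum of the limits). That argument is sound and makes the proposition self-contained, at the modest cost of a few extra lines; the paper's choice buys brevity by outsourcing the standard fact to the reference.
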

\begin{proof}
\par For Case 1.1, $\forall ~p_s\in P_{1,k}^+$ where the control objective function $f(p_s)$ is differentiable,
\begin{eqnarray*}
\lefteqn{f'\left(p_s,v\right)= f_k'\left(p_s,v\right)}\\
&=&\left(1-\left(A\left(k\right)p_s +b_k\right)^2\right)^{-\frac{1}{2}}\cdot \left(A\left(k\right)\:v\right)\cdot sign\left(A\left(k\right)\:p_s+b_k\right)+r\cdot\nabla \sigma_k\left(p_s\right)\:v,
\end{eqnarray*}
It is obvious that the first directional derivative of the control objective function is a linear function of the direction $v$ in this case
\par
For Case 1.2, $\forall ~p_s\in P_{2,k}^+$, we apply Definition \ref{def.firstorderderivative},
\begin{align*}
    \lefteqn {
      f'\left(p_s,v\right)
      = f_k'\left(p_s,v\right)
      = \lim\limits_{
	  \tau\downarrow 0}{
	    \frac{
	    \left\{
            \begin{array}{l}
              \arcsin\left(|A\left(k\right)\left(p_s+\tau\cdot  v\right)+b_k|\right)
	      + r \cdot \sigma_k\left(p_s+\tau\cdot  v\right)\\
              -\left[\arcsin\left(|A\left(k\right)\:p_s+b_k|\right)
              + r\cdot\sigma_k \left(p_s\right)\right]
            \end{array}
	    \right\}}%
            {\tau}}
       } \\
  & = \lim\limits_{\tau\downarrow 0}{\frac{\arcsin(|\tau\cdot A\left(k\right)\:v|)+r\cdot\sigma_k\left(p_s+\tau\cdot  v\right)-r\cdot\sigma_k\left(p_s\right)}{\tau}}\\
  & =\lim\limits_{\tau\downarrow 0}{\frac{\arcsin\left(|\tau\cdot  A\left(k\right) v|\right)}{\tau}}+r\cdot\nabla \sigma_k\left(p_s\right)\:v =|A\left(k\right) \:v|+r\cdot\nabla \sigma_k\left(p_s\right)\:v, ~
	\qquad\qquad\qquad\quad
\end{align*}
where, the first equality of the last line is because of $\sigma_k\in C^{\infty}\left(P^+\right)$ and the second equality of that line is by L'Hospital's rule. The first directional derivative of the control objective function $f(p_s,v)$ is a piecewise linear function of the direction $v$ in this case.
\end{proof}
For Case 2 and $m=2$, one can distinguish 9 subcases.
Most subcases have two or four different expressions for the Jacobian matrices.
Because Case 2 will occur less frequently than Case 1.2,
the authors have decided not to include the formulas
of the Jacobian matrices for each subcase.
\subsection{The second directional derivative of the control objective function}\label{secondorderdiff}
\begin{definition}\label{directionalderivative2}
\cite[Ch. 4.2, p. 162]{cui2021modern}
Let $f:\mathcal{O}\subset \mathbb{R}^{n^+-1}\to  \mathbb{R}$ 
be a scalar-valued function defined on an open subset 
$\mathcal{O}$ of  $\mathbb{R}^{n^+-1}$ 
that is directional differentiable at $x$. 
The {\em second directional derivative} 
of $f$ at $x\in \mathcal{O}$ in a direction $v\in \mathbb{R}^{n^+-1}$ 
exists if the following limit exists,
\begin{align*}
     \frac{1}{2}f''\left(x,v\right)
     & := \lim\limits_{\tau\downarrow 0}
       {\frac{f\left(x+\tau\cdot  v\right)
	-f\left(x\right)
	- \tau \cdot f'\left(x,v\right)}{\tau^2}
       }.
\end{align*}
We say that $f$ is twice directionally differentiable at $x$ 
if it is directionally differentiable at  $x$ and 
if the limit $f''(x,v)$ exists for all $v \in\mathbb{R}^{n^+-1}$.
\par 
\cite[Ch. 4.2, formula 4.11, p. 165]{cui2021modern}
A function f is said to be twice semidifferentiable at $x$ if it is directional differentiable at $x$ and the limit
\begin{displaymath}
    \frac{1}{2} f''
    \left(
      x, v
    \right)
    :=\lim\limits_{\substack{v'\to v\\ \tau\downarrow 0}} ~
	{\frac{f\left(x+\tau\cdot  v'\right)-f\left(x\right)-\tau\cdot  f'\left(x,v'\right)}{\tau^2}}, ~
	\mbox{exists} ~ \forall ~ v \in \mathbb{R}^n.
\end{displaymath}
\end{definition}
\begin{proposition}\label{prop:seconddirectionalderivative}
Consider the control objective function of
Def.~\ref{def:controlobjectivefunction}.
\begin{itemize}
\item[(a)]
Case 1.1.
For any $p_s \in P_{\left(k\right),nz}^+$,
and a direction vector $v \in \mathbb{R}^{n^+-1}$, 
such that $p_s+v \in P^+$,
the second directional derivative of the
control objective function exists and equals,
\begin{align*}
    \frac{1}{2}\:f''\left(p_s, v\right)
    & = \frac{1}{2}\: f_{k}''\left(p_s, v\right)
      = \left(
              1 - 
	      \left(
	        A
		\left(
		  k
	        \right) \: p_s 
		+ b_k
              \right)^2 
            \right)^{-\frac{3}{2}} \times
	\\
    & ~~~~~ \times
      | A\left(k\right)\:p_s + b_k | \cdot 
      v^{\top} A\left(k\right)^{\top} A\left(k\right) v
      + r \cdot v^{\top} \nabla^2\left(\sigma_k\left(p_s\right)\right) v.  
\end{align*}
\item[(b)]
Case 1.2.
For any $p_s \in P_{\left(k\right),z}^+$,
and a direction vector $v \in \mathbb{R}^{n^+-1}$, 
such that $p_s+v \in P^+$,
the second directional derivative of the
control objective function exists and is equals,
\begin{eqnarray*}
    \frac{1}{2} f''\left(p_s, v\right)
    & = &\frac{1}{2}  f_{k}''\left(p_s, ~ v\right) 
          = v^{\top} \:\nabla^2 \sigma_k\left(p_s\right) v.
\end{eqnarray*}
\item[(c)] Case 2. \cite[Ch. 4, p. 157]{cui2021modern}.
For any $p_s \in P_{\left(k_1,k_2,\cdots,k_m\right)}^+$,
and a direction vector $v \in \mathbb{R}^{n^+-1}$, 
such that 
$p_s+v \in P^+$, and define $I_{max}\left(p_s,v\right)
:=\{k\in I_{max}\left(p_s\right)~|~f'\left(p_s,v\right)=f_k'\left(p_s,v\right)\}$,
the second directional derivative of the
control objective function exists and equals,
\begin{eqnarray*}
  \frac{1}{2} f''\left(p_s,v\right)
  = \frac{1}{2}\max\limits_{k\in I_{max}\left(p_s,v\right)}{f_k''\left(p_s,v\right)}.
\end{eqnarray*}
Note that $f_k''\left(p_s,v\right)$ takes the expression of Case 1.1 or Case 1.2, depending on whether  $A\left(k\right) p_s + b_k$ is nonzero or zero, respectively.
\end{itemize}
\end{proposition}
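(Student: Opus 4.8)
The plan is to reduce, in each case, the local behaviour of $f$ near $p_s$ to that of a single ``smooth-plus-kink'' function $f_k$, and then to read off the second-order term by Taylor expansion. The ingredients I would assemble first are: (i) $\sigma_k\in C^\infty(P^+)$ by Lemma~\ref{Vaindiff}, so $\sigma_k(p_s+\tau v)=\sigma_k(p_s)+\tau\,\nabla\sigma_k(p_s)v+\tfrac{\tau^2}{2}\,v^\top\nabla^2\sigma_k(p_s)v+o(\tau^2)$ with a remainder that is locally uniform in $v$ over the compact set $P^+$; (ii) the Hessian formula $\nabla^2 f_{as,k}(p_s)=\bigl(1-(A(k)p_s+b_k)^2\bigr)^{-3/2}\,|A(k)p_s+b_k|\,A(k)^\top A(k)$ established inside the proof of Lemma~\ref{gconvex}, valid wherever $A(k)p_s+b_k\neq 0$; (iii) the first directional derivatives already computed in Proposition~\ref{prop:firstdirectionalderivative}; and (iv) the max-rule for second directional (semi-)derivatives of a finite maximum of twice-semidifferentiable functions, \cite[Ch.~4]{cui2021modern}.

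\emph{Case 1.1.} By the definition of $P_{(k),nz}^+$ there is an open neighbourhood $\mathcal{O}\subseteq P^+$ of $p_s$ on which $f\equiv f_k$ and on which $A(k)p_s'+b_k$ keeps one fixed nonzero sign; on $\mathcal{O}$ the function $f_{as,k}$ is then the composition of $\arcsin$ with the affine map $p_s'\mapsto\pm(A(k)p_s'+b_k)$, whose range lies in $(-1,1)$, hence $f_{as,k}\in C^\infty(\mathcal{O})$ and $f=f_{as,k}+r\,\sigma_k\in C^2(\mathcal{O})$. For a $C^2$ function the limit in Definition~\ref{directionalderivative2} is exactly the quadratic form of the Hessian, so a direct second-order Taylor expansion of $f_k$ at $p_s$, together with $\nabla^2 f_k=\nabla^2 f_{as,k}+r\,\nabla^2\sigma_k$ and ingredient~(ii), yields the stated formula. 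This case is routine.

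\emph{Case 1.2 is where the real work lies.} Now $A(k)p_s+b_k=0$, so for $\tau>0$ small $f_{as,k}(p_s+\tau v)=\arcsin(\tau\,|A(k)v|)$, and the elementary expansion $\arcsin(x)=x+\tfrac16 x^3+O(x^5)$ gives $f_{as,k}(p_s+\tau v)-f_{as,k}(p_s)-\tau\,|A(k)v|=O(\tau^3)$. Dividing by $\tau^2$ this contributes nothing to the limit in Definition~\ref{directionalderivative2}; combined with the second-order Taylor expansion of $\sigma_k$ from ingredient~(i) and with $f'(p_s,v)=|A(k)v|+r\,\nabla\sigma_k(p_s)v$ from Proposition~\ref{prop:firstdirectionalderivative}(b), the limit defining $\tfrac12 f''(p_s,v)$ collapses to the $\sigma_k$-Hessian term alone, which is the claimed formula; in particular the nondifferentiable term $\arcsin|\cdot|$ is invisible at second order. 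To upgrade ``twice directionally differentiable'' to ``twice semidifferentiable'' I would take $v'\to v$ as $\tau\downarrow 0$ and observe that all the remainder bounds ($O(\tau^3)$ for the $\arcsin$ part, $o(\tau^2)$ for $\sigma_k$, and the joint continuity of $f'(p_s,v')$ in $v'$) are locally uniform in $v'$, so the double limit exists and gives the same value. The main obstacle in this case is the careful bookkeeping of constants and the verification of these uniform remainder estimates.

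\emph{Case 2.} On a neighbourhood of $p_s$ one has $f=\max_{k\in I_{max}(p_s)}f_k$, a maximum of finitely many functions each twice semidifferentiable at $p_s$ by Cases~1.1--1.2. Invoking the max-rule \cite[Ch.~4]{cui2021modern} gives $f'(p_s,v)=\max_{k\in I_{max}(p_s)}f_k'(p_s,v)$ (consistent with Proposition~\ref{prop:firstdirectionalderivative}(c)) and then $\tfrac12 f''(p_s,v)=\tfrac12\max_{k\in I_{max}(p_s,v)}f_k''(p_s,v)$, where $I_{max}(p_s,v)$ is the subset of indices also active for the direction $v$ and each $f_k''(p_s,v)$ is taken from Case~1.1 or Case~1.2 according as $A(k)p_s+b_k$ is nonzero or zero. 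Here the only thing to verify is that the hypotheses of the cited max-rule hold: finiteness of $I_{max}(p_s)$, twice semidifferentiability of each $f_k$, and the fact that indices which fail to be directionally active do not contribute to the second-order term.
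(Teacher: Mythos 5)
Your approach is correct and is essentially the proof the paper intends: the authors omit the proof with the remark that it is ``similar to the proof of Proposition~\ref{prop:firstdirectionalderivative}'', and that proof is precisely the direct difference-quotient computation you carry out --- local smoothness of $f_k$ in Case 1.1, the observation that the $\arcsin|\cdot|$ term contributes only $O(\tau^3)$ in Case 1.2 (their L'Hospital step, your series expansion), and the max-rule of \cite{cui2021modern} in Case 2. One caveat you should not gloss over: executing your Taylor expansions literally against Definition~\ref{directionalderivative2} gives $\tfrac12 f''(p_s,v)=\tfrac12\, v^{\top}\nabla^2 f_k(p_s)\,v$ in Case 1.1 and $\tfrac{r}{2}\, v^{\top}\nabla^2\sigma_k(p_s)\,v$ in Case 1.2, i.e.\ one half of the displayed right-hand sides, and with the factor $r$ restored in Case 1.2; so your claim that the computation ``yields the stated formula'' is not literally true. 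The mismatch is a constant-factor inconsistency in the proposition's statement relative to Definition~\ref{directionalderivative2} rather than a flaw in your method, but it must be reconciled before the argument can be said to prove the statement as written.
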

The proof is omitted
because it is similar to the proof of
Proposition~\ref{prop:firstdirectionalderivative}.
\begin{lemma}\label{objectnottwicesemidiffe}
The control objective function $f(p_s)$ is not necessarily twice semidifferentiable. 
\begin{proof}
By \cite[Ch. 4.2, example 4.2.1, p. 165]{cui2021modern}
\begin{equation*}
\begin{aligned}
&\frac{1}{2}\min\limits_{k\in I_{max}\left(p_s,v\right)}{f_k''\left(p_s,v\right)}\leq 
\liminf\limits_{\substack{v'\to v\\ \tau\downarrow 0}}{\frac{f\left(p_s+\tau\cdot  v'\right)-f\left(p_s\right)-\tau\cdot  f'\left(p_s,v'\right)}{\tau^2}}\leq\\
& \limsup\limits_{\substack{v'\to v\\ \tau\downarrow 0}}{\frac{f\left(p_s+\tau\cdot  v'\right)-f\left(p_s\right)-\tau\cdot  f'\left(p_s,v'\right)}{\tau^2}}\leq
\frac{1}{2}\max\limits_{k\in I_{max}\left(p_s,v\right)}{f_k''\left(p_s,v\right)}.
\end{aligned}
\end{equation*}
By formulas in Proposition \ref{prop:seconddirectionalderivative}, $\min\limits_{k\in I_{max}\left(p_s,v\right)}{f_k''(p_s,v)}\neq \max\limits_{k\in I_{max}\left(p_s,v\right)}{f_k''(p_s,v)}$, in general.
\end{proof}
\end{lemma}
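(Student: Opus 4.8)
The plan is to establish the claim by directly exhibiting the gap between the lower and upper envelopes of the difference quotient in the chain of inequalities quoted from \cite{cui2021modern}. First I would fix a power supply vector $p_s \in P_{(k_1,k_2,\ldots,k_m)}^+$ lying in the Case~2 stratum with $m \geq 2$, and a direction $v$ chosen so that $I_{max}(p_s,v)$ still contains at least two indices, say $k_1$ and $k_2$. The existence of such a pair $(p_s,v)$ is what must be argued carefully: generically the first directional derivative $f'(p_s,v) = \max_{k \in I_{max}(p_s)} f_k'(p_s,v)$ is attained at a single index, but by choosing $v$ in the (lower-dimensional) set where two of the affine-plus-$\sigma$ derivative functionals agree, one forces $\{k_1,k_2\} \subseteq I_{max}(p_s,v)$. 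Since the domain $P^+$ is a full-dimensional polytope (Proposition~\ref{thmdomain}) and the $\sigma_k$ are smooth (Lemma~\ref{Vaindiff}), such configurations exist unless the power system is degenerate; one could add this as a mild genericity caveat, consistent with the ``in general'' qualifier in the statement.

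Next I would compute the two candidate second directional derivatives $f_{k_1}''(p_s,v)$ and $f_{k_2}''(p_s,v)$ using the formulas of Proposition~\ref{prop:seconddirectionalderivative}. Each has the shape $2\big(1-(A(k_i)p_s+b_{k_i})^2\big)^{-3/2}|A(k_i)p_s+b_{k_i}|\, v^\top A(k_i)^\top A(k_i) v + r\, v^\top \nabla^2\sigma_{k_i}(p_s) v$ in Case~1.1, or the purely-$\sigma$ version $v^\top \nabla^2\sigma_{k_i}(p_s)v$ in Case~1.2. The point is that these two numbers are determined by genuinely different data: the rows $A(k_1)$ and $A(k_2)$ are distinct, the offsets $b_{k_1}, b_{k_2}$ differ, and the Hessians $\nabla^2\sigma_{k_1}(p_s)$ and $\nabla^2\sigma_{k_2}(p_s)$ are independent implicit objects. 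Hence for a generic choice of the still-free data (e.g. perturb $v$ within the subspace keeping $f_{k_1}'=f_{k_2}'$, or invoke that the two quadratic forms are not identically equal), one has $f_{k_1}''(p_s,v) \neq f_{k_2}''(p_s,v)$, so $\min_{k \in I_{max}(p_s,v)} f_k''(p_s,v) < \max_{k \in I_{max}(p_s,v)} f_k''(p_s,v)$.

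With this strict inequality in hand, the sandwich from \cite[Ch.~4.2, Ex.~4.2.1]{cui2021modern} gives $\liminf_{v'\to v,\,\tau\downarrow 0}$ of the second-order difference quotient bounded above by $\tfrac12\min_k f_k''$ while $\limsup$ is bounded below by $\tfrac12\max_k f_k''$ — more precisely one should check that the inequalities there are tight enough that the liminf actually reaches the min and the limsup the max (this is the content of the example and can be cited). Since $\tfrac12\min < \tfrac12\max$, the liminf and limsup of the defining limit for twice-semidifferentiability are distinct, so the limit $\tfrac12 f''(p_s,v)$ over $v'\to v$, $\tau\downarrow 0$ does not exist. Therefore $f$ fails to be twice semidifferentiable at $p_s$, which is exactly the assertion. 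The main obstacle is the first step: producing a legitimate witness $(p_s,v)$ with $|I_{max}(p_s,v)| \geq 2$ and $f_{k_1}'' \neq f_{k_2}''$ simultaneously — i.e.\ verifying that the degeneracy (all second derivatives along that stratum being equal) is non-generic rather than forced; everything after that is a direct citation of the quoted sandwich plus Proposition~\ref{prop:seconddirectionalderivative}. If one wants to avoid the genericity discussion entirely, an alternative is to simply construct a small explicit two-line example (two power lines, a handful of nodes) where the numbers can be written down and seen to differ, mirroring the computational evidence already invoked for Lemma~\ref{snotconvex}.
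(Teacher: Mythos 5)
Your proposal takes essentially the same route as the paper's proof: invoke the Cui--Pang sandwich placing the second-order difference quotient between $\tfrac12\min_{k\in I_{max}(p_s,v)}f_k''(p_s,v)$ and $\tfrac12\max_{k\in I_{max}(p_s,v)}f_k''(p_s,v)$, and then observe via the formulas of Proposition~\ref{prop:seconddirectionalderivative} that these two extremes differ in general, so the defining limit cannot exist. The additional care you take --- exhibiting a Case-2 witness with $|I_{max}(p_s,v)|\geq 2$, and flagging that the displayed chain of inequalities alone does not separate the $\liminf$ from the $\limsup$ unless the bounds are actually attained (which is the content of the cited example) --- fills in details the paper leaves implicit, but the underlying argument is the same.
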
 
\begin{remark}
Note that due to the non-necessarily twice semidifferentiable characteristics of the control objective function $f$, we cannot establish the continuity of $f''(p_s, v)$ concerning $v \in \mathbb{R}^{n^+-1}$. Hence, we can not make sure a vector $p_s^*\in P^+$ which satisfies $f'(p_s^*,v)\geq 0,~\forall~v\in \mathbb{R}^{n^+-1},\:p_s^*+v\in P^+$ and $f''(p_s^*,v)\geq 0$, for all $v$, such that $p_s^*+v\in P^+,\:f'(p_s^*,v)= 0$ , is a local minimizer. Instead we should also investigate the vectors in the neighborhood of $p_s^*$.
\end{remark}
\section{Algorithms, the descent method, and convergence theorems}\label{sec:algorithms}
The gradient vector and Hessian matrix of the implicit functions $V_i$ and $\sigma_i$, $i\in\mathbb{Z}_{n_E}$
 defined in Procedure~\ref{proc:computationcontrolobjectivefunction}, Step 3
are analyzed in Subsections~\ref{gradientandHessianofV} and \ref{gradientandHessianofS}, respectively.
Subsections~\ref{sec:steepest} 
and \ref{subsec:convergence}
introduce an algorithm using the steepest descent method 
for approximating a local minimizer and 
the convergence analysis theorem.
In Subsection \ref{sec:computationofinitialvector},
an algorithm is provided to compute an approximate stationary vector
to serve as an initial vector for the algorithm of Subsection~\ref{sec:steepest}.
\subsection{The analysis and computation of the gradient vector and the Hessian matrix of a {\em variance} $V_i$, with respect to a power supply vector $p_s\in P^+$}\label{gradientandHessianofV}
Since the variance $V_i$ does not have an explicit formula, we decide to use
the directional derivative method similar as \cite[Theorem 6]{optimal_inertia_placement} to compute the gradient vector and the Hessian matrix of that function.
\begin{proposition}\label{directionalmethod}
The first and the second directional derivatives 
of the implicit function $V_i:P^+\rightarrow \mathbb{R}_+,\:i\in\mathbb{Z}_{n_E}$, 
of a vector $p_s\in P^+$,
can be computed according to the formulas,
\begin{align*}
    V_i\left(p_s+\delta\:\mu\right)
    & = V_i^{(0)}\left(p_s\right)+V_i^{(1)}\left(p_s\right)\delta+V_i^{\left(2\right)}\left(p_s\right)\delta^2,\:\delta\in\mathbb{R},\:\mu\in \mathbb{R}^{n^+-1},\:\text{where,}\\
    V_i^{(1)}\left(p_s\right)
    & = \nabla_\mu V_i\left(p_s\right)
      =\mu^\top \nabla V_i\left(p_s\right), ~~~
      V_i^{(2)}\left(p_s\right)
      = \frac{1}{2}\:\mu^\top \nabla^2 \:V_i\left(p_s\right)\:\mu.
\end{align*}
\end{proposition}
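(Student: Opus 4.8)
The plan is to express the variance $V_i(p_s)$ as a composition of smooth maps and then exploit the fact that, by Lemma~\ref{Vaindiff}, this composition is $C^{\infty}$ on the compact convex set $P^+$, so along any ray $\delta \mapsto p_s + \delta\,\mu$ the restriction is a smooth scalar function and its Taylor expansion at $\delta = 0$ reads off the directional derivatives. Concretely, I would start from the reduced representation of $\sigma_i$, equivalently $V_i = \sigma_i^2$, given in Equation~\eqref{expressionofsigma}, where $V_i(p_s)$ is the $(i,i)$ entry of $C_d(p_s)\,Q_d(p_s)\,C_d(p_s)^{\top}$ with $Q_d(p_s) = \int_0^\infty e^{J_d(p_s)t} K_d(p_s) K_d(p_s)^{\top} e^{J_d(p_s)^{\top}t}\,dt$ the unique positive-definite solution of the Lyapunov equation $J_d(p_s) Q_d + Q_d J_d(p_s)^{\top} + K_d(p_s)K_d(p_s)^{\top} = 0$. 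Since $J_d, K_d, C_d$ are polynomial/entire in the entries of $p_s$ (they involve only $\cos\arcsin(A p_s + b) = \sqrt{1 - (Ap_s+b)^{\circ 2}}$, which is real-analytic on $P^+$ because the constraints keep the arguments strictly inside $(-1,1)$), and $J_d$ stays Hurwitz on $P^+$, the implicit function $Q_d$ depends smoothly on $p_s$; hence $V_i \in C^{\infty}(P^+)$, which is exactly Lemma~\ref{Vaindiff} restated for the variance.

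Next I would fix $p_s \in P^+$ and a direction $\mu \in \mathbb{R}^{n^+-1}$, and define the one-variable function $\phi(\delta) := V_i(p_s + \delta\,\mu)$ for $\delta$ in a neighborhood of $0$ small enough that $p_s + \delta\,\mu \in P^+$ (possible since $P^+$ has nonempty interior relative to the relevant affine hull, or at least the ray segment stays in $P^+$ as assumed in the statement). By the chain rule, $\phi \in C^{\infty}$ near $0$ with $\phi'(0) = \mu^{\top}\nabla V_i(p_s)$ and $\phi''(0) = \mu^{\top}\nabla^2 V_i(p_s)\,\mu$. The claim that $V_i(p_s + \delta\,\mu)$ is \emph{exactly} a quadratic polynomial in $\delta$, i.e. $\phi(\delta) = V_i^{(0)} + V_i^{(1)}\delta + V_i^{(2)}\delta^2$ with no higher-order terms, cannot hold for the true $V_i$ in general — $\phi$ is smooth but not polynomial. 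So I read the proposition as asserting the \emph{second-order Taylor model}: $V_i^{(0)}(p_s) = \phi(0) = V_i(p_s)$, $V_i^{(1)}(p_s) = \phi'(0) = \mu^{\top}\nabla V_i(p_s)$, and $V_i^{(2)}(p_s) = \tfrac12\phi''(0) = \tfrac12 \mu^{\top}\nabla^2 V_i(p_s)\,\mu$, which is precisely what the displayed formulas say once one matches coefficients; the content is the identification of the Taylor coefficients of the ray-restriction with the gradient and Hessian, plus the fact that these exist (from $C^{\infty}$-ness). I would therefore prove it by: (1) invoking Lemma~\ref{Vaindiff} / smoothness of $V_i = \sigma_i^2$; (2) applying the standard one-dimensional Taylor expansion with Peano remainder to $\phi$ at $\delta = 0$; (3) computing $\phi'(0)$ and $\phi''(0)$ via the chain rule in terms of $\nabla V_i$ and $\nabla^2 V_i$. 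For the computational side (the part "can be computed according to the formulas"), I would additionally derive $V_i^{(1)}$ and $V_i^{(2)}$ directly from the Lyapunov equation by differentiating it along the ray: writing $J_d(\delta) = J_d(p_s + \delta\mu)$ etc., one gets a sequence of Sylvester equations for $\tfrac{d}{d\delta}Q_d$ and $\tfrac{d^2}{d\delta^2}Q_d$ at $\delta = 0$, each solvable because $J_d(p_s)$ is Hurwitz, and then $V_i^{(1)}, V_i^{(2)}$ are the corresponding entries of $C_d Q_d' C_d^{\top}$ and $\tfrac12 C_d Q_d'' C_d^{\top}$ plus the terms coming from differentiating $C_d$; this mirrors \cite[Theorem 6]{optimal_inertia_placement}.

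The main obstacle is making precise sense of the statement: as literally written, the equality $V_i(p_s + \delta\mu) = V_i^{(0)} + V_i^{(1)}\delta + V_i^{(2)}\delta^2$ is false for generic smooth $V_i$, so the proof must clarify that it is a second-order expansion (Taylor model with $o(\delta^2)$ remainder suppressed in the notation of the surrounding algorithm) rather than an exact polynomial identity — otherwise one would need to additionally verify that the third and higher ray-derivatives of $V_i$ vanish, which is not true. Assuming the intended reading, the only real work is the chain-rule bookkeeping relating $\phi'(0), \phi''(0)$ to $\nabla V_i, \nabla^2 V_i$, which is routine, and (for the constructive version) setting up and solving the differentiated Lyapunov/Sylvester equations, whose solvability is immediate from the Hurwitz property established in Lemma~\ref{Vaindiff}. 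A secondary point to watch is that $p_s + \delta\mu$ must remain in $P^+$ for $\delta$ near $0$, which is used to guarantee both that the arguments of $\arcsin$ stay in $(-1,1)$ and that $J_d$ stays Hurwitz; this is exactly the hypothesis built into the statement.
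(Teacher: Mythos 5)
Your proposal is correct and follows essentially the same route as the paper: the paper gives no formal proof of Proposition~\ref{directionalmethod} but the surrounding development (smoothness of $V_i$ from Lemma~\ref{Vaindiff}, second-order Taylor expansions of $W$, $U$, $J_d$, $K_d$, $C_d$, and $Q$ along the ray, and the successive Lyapunov equations~(\ref{lyapequation}) solvable because $J_d^{(0)}$ is Hurwitz) is exactly your chain-rule-plus-differentiated-Lyapunov argument. Your observation that the displayed equality must be read as a truncated Taylor model rather than an exact polynomial identity is confirmed by the paper's own $O(\delta^3)$ remainder terms in Equation~(\ref{ViCdQVi}) and~(\ref{Lyapequation1}).
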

Using this proposition, specifying $k,\:j\in \mathbb{Z}_{n^+-1}$,
one can calculate $\nabla V_i\left(p_s\right)\left(k\right)$ 
by $\mu=e_k$ and
$\frac{1}{2}\cdot\left(\nabla^2 V_i\left(p_s\right)\left(k,k\right)+\nabla^2 V_i\left(p_s\right)\left(j,j\right)+\nabla^2 V_i\left(p_s\right)\left(k,j\right)+\nabla^2 V_i\left(p_s\right)\left(j,k\right)\right)$ 
by $\mu=e_k+e_j$.
\par
However, after perturbing the variable from $p_s$ to $p_s+\delta\:\mu$, one needs to differentiate each of the matrices in Equation (\ref{expressionofsigma}) with respect to $\delta$. The weight matrix function $W$, as mentioned in Lemma \ref{Vaindiff}, depends on the variable $p_s$ and is part of the system matrix $J_d$ in Equation (\ref{Systemmatrix3}). Therefore, it should be investigated first. View the matrix $W(p_s+\delta\mu)$ as a matrix function with respect to $\delta$ and define $h(\delta)=W(p_s+\delta\mu)$. 
Its Taylor expansion equals,
\begin{equation*}
\begin{aligned}
h\left(\delta\right)&=W\left(p_s+\delta\:\mu\right)=W^{(0)}+W^{(1)}\:\delta+W^{(2)}\:\delta^2, \:\text{where,}\\
W^{(0)}&=W\left(p_s\right),\:W^{(1)}={\frac{dh\left(\delta\right)}{d\delta}}|_{{\delta=0}};\:W^{(2)}=\frac{1}{2}\frac{d^2h(\delta)}{d\delta^2}|_{\delta=0}.
\end{aligned}
\end{equation*}
In the formulas above, the matrices $W^{(1)}$ and $W^{(2)}$ 
need to be computed from matrices that are already available. 
First, we define a matrix $E$ as 
$E=[U_1-U_{n^+}, \cdots,U_{n^+-1}-U_{n^+},U_{n^++1}-U_{n^+},
  \cdots,U_{n_{E}}-U_{n^+}]\in\mathbb{R}^{n_V\times \left(n_V-1\right)}$, 
with respect to the orthonormal matrix $U$ as defined in 
Procedure~\ref{proc:computationcontrolobjectivefunction} Step 1.
Next, recall the notation $E_i\in\mathbb{R}^{n_V}$, 
representing the i-th column of the matrix $E$. 
The formulas for the matrices $W^{(1)}$ and $W^{(2)}$ 
are displayed in Equation (\ref{W1W2}).
\par
Secondly, we present the Taylor expansions of a variance $V_i\left(p_s+\delta\:\mu\right)$, the i-th row of output matrix $C_{d}\left(\delta\right)$ denoted as $C_{d,i}\left(\delta\right)$, and the solution of the Lyapunov equation $Q\left(\delta\right)$ with respect to $\delta$ in Equation (\ref{ViCdQVi}). Following Proposition \ref{directionalmethod}, we provide formulas for the first directional derivative $\nabla_\mu V_i\left(p_s\right)$ and the second directional derivative $\nabla^2_\mu \:V_i\left(p_s\right)$ in Equation (\ref{GradientandHessian}). Within Equation (\ref{GradientandHessian}), the unknown matrices or vectors are $C_{d,i}^{(0)},C_{d,i}^{(1)},C_{d,i}^{(2)},Q^{(0)},Q^{(1)},Q^{(2)}$. Before calculating these matrices, the transformation matrix $U$ as a function of $\delta$, which satisfies~$U(\delta)^{\top}\:M^{-\frac{1}{2}}\:B\:W(p_s+\delta\:\mu)\:B^{\top}\:M^{-\frac{1}{2}}\:U(\delta)=\Lambda_{1}$, must be analyzed. Here, the matrix~$\Lambda_{1}$ is diagonal, and the first diagonal element of the matrix $\Lambda_{1}$ should be $0$. If it is not zero, the columns of the matrix $U(\delta)$ need to be permuted. We describe the Taylor expansion of the transformation matrix function $U$ concerning $\delta$ as follows, similar to the function $h$,
\vspace{-2mm}
\begin{equation*}
\begin{aligned}
 U\left(\delta\right)&=U^{\left(0\right)}+U^{(1)}\:\delta+U^{(2)}\:\delta^2+O(\delta^3), ~\text{where,}\\
 U^{(0)}&=U\left(0\right),\:U^{(1)}=\frac{dU(\delta)}{d\delta} |_{\delta=0},\:U^{(2)}=\:\frac{1}{2}\frac{d^2U(\delta)}{d\delta^2}|_{\delta=0}.
\end{aligned}
\end{equation*}
\vspace{-6mm}
\begin{remark}\label{remarkonU}
This remark pertains to the matrices $U^{(1)}$ and $U^{(2)}$. We define a function, denoted as $L$, with respect to $\delta$ as $L(\delta)=M^{-\frac{1}{2}}\:B\:W(p_s+\delta\:\mu)\:B^{\top}\:M^{-\frac{1}{2}}:\:\mathbb{R}\rightarrow \mathbb{R}^{n_V\:\times\: n_V}$.
\par
If the dimension $n^+-1$ of the matrix $L(\delta)$ is relatively small, say no more than 4, then we may be able to get the analytical solution of the orthonormal matrix $U(\delta)$. The values of the matrices $U^{(0)},\:U^{(1)},\:U^{(2)}$ there follows. 
\par
If the dimension $n^+-1$ of the matrix $L(\delta)$ is large, it follows from Galois' theorem that it is impossible to get the analytical expression of the orthonormal matrix $U(\delta)$. Therefore, a method to approximate the numerical value of the matrices $U^{(0)},\:U^{(1)},\:U^{(2)}$ needs to be formulated. 
It is easy to compute the numerical value of matrix $U\left(0\right)$ by the formula $U\left(0\right)^\top L\left(0\right)U\left(0\right)=\Lambda$, where $\Lambda$ is a diagonal matrix, as the matrix $L\left(0\right)$ is already known. 
The authors then choose the finite difference method 
to approximate the first and second-order derivative matrices 
$U^{(1)},\:U^{(2)}$. 
The errors between the numerical values and real values 
are dependent on the step length denoted by $\Delta$.
For a step length, 
the reader may think of $10^{\left(-4\right)}$ as an example, 
we have the following approximations: 
\vspace{-2mm}
\begin{align*}
     & U^{(1)}
       \approx
       \frac{U\left(\Delta\right)-U\left(-\Delta\right)}{2\:\Delta}, ~
       U^{(2)}
       \approx
       \frac{U\left(\Delta\right)-2\:U\left(0\right)+U\left(-\Delta\right)}{
	     2\:\Delta^2
       },
\end{align*}
where the values of the matrices $U\left(\Delta\right),\:U\left(-\Delta\right)$ can be approximated similarly to $U\left(0\right)$.
\end{remark}
With respect to this remark, the numerical values of the output and the input matrices $C_d^{(0)},\:C_d^{(1)},\:C_d^{(2)},\:K_d^{(0)},K_d^{(1)},\:K_d^{(2)}$ can be computed by the formulas of Equation (\ref{Cd0Cd1Cd2Kd0Kd1Kd2}), and the system matrices $J_d^{(0)},\:J_d^{(1)},\:J_d^{(2)}$ can be computed by Equation (\ref{Ad0Ad1Ad2}). These matrices will be used for the calculation of the solution matrices $\:Q^{(0)},\:Q^{(1)},\:Q^{(2)}$ of the Lyapunov equations.
\par
Finally, we present the Taylor expansion of the Lyapunov equation, \\${ Q\left(\delta\right)\:A_d\left(\delta\right)^\top+A_d\left(\delta\right)Q\left(\delta\right)+B_d\left(\delta\right)\:{B_d\left(\delta\right)}^\top=0}$ in Equation (\ref{Lyapequation1}). 
Solve successively the following three Lyapunov equations 
for the matrices $Q^{(0)},Q^{(1)},Q^{(2)}$,
\vspace{-2mm}
\begin{align}\label{lyapequation}
     0
     & = Q^{(0)}\:{J_d^{(0)}}^\top+J_d^{(0)}\:Q^{(0)}+K_d^{(0)}\:{K_d^{(0)}}^\top, \\
     0 
     & = Q^{(1)}\:{J_d^{(0)}}^\top+J_d^{(0)}\:Q^{(1)}+\left[Q^{(0)}\:{J_d^{(1)}}^\top+J_d^{(1)}\:Q^{(0)}+K_d^{(0)}\:{K_d^{(1)}}^\top+K_d^{(1)}\:{K_d^{(0)}}^\top\right],\\
     0
     & = Q^{(2)}\:{J_d^{(0)}}^\top+J_d^{(0)}\:Q^{(2)}+\left[Q^{(0)}\:{J_d^{(2)}}^\top+J_d^{(2)}\:Q^{(0)}+Q^{(1)}\:{J_d^{(1)}}^\top+J_d^{(1)}\:Q^{(1)}+\right.\\
          & \qquad \qquad \qquad \qquad \qquad \qquad\qquad \qquad~ \left.+K_d^{(0)}\:{K_d^{(2)}}^\top+K_d^{(1)}\:{K_d^{(1)}}^\top+K_d^{(2)}\:{K_d^{(0)}}^\top\right]. \nonumber
\end{align}
Since the system matrix $J_d^{(0)}$ is Hurwitz, 
each of the above three Lyapunov equations 
has a unique positive definite solution.
\subsection{The algorithm of computation of the gradient vector and the Hessian matrix of a {\em standard deviation} $\sigma_i$ with respect to a power supply vector $p_s\in P^+$}\label{gradientandHessianofS}
Subsection \ref{gradientandHessianofV} analyses 
the method of computation for the gradient vector and the Hessian matrix 
of a variance $V_i$ of a a power supply vector $p_s\in P^+$.
However, needed is the computation of a gradient vector and 
the Hessian matrix of a standard deviation $\sigma_i$
of a vector $p_s\in P^+$ denoted as~$\nabla{\sigma_i}\left(p_s\right),\:\nabla^2\sigma_i\left(p_s\right)$, respectively.  Here, we have the formulas,
 \vspace{-2mm}
\begin{align*}
  \nabla{{\sigma_i}^2}
	& = 2\:\sigma_i\:\nabla \sigma_i ~ 
	  \Rightarrow ~ 
	  \nabla{\sigma_i}
	  =\frac{\nabla{\sigma_i^2}}{2\:\sigma_i}
	  =\frac{\nabla{V_i}}{(2\:\sigma_i)}, \\
   \nabla^2\sigma_i^2
   & = 2\:\nabla\sigma_i^{\top}\:\nabla\sigma_i+2\:\sigma_i\:\nabla^2\sigma_i ~
       \Rightarrow ~
       \nabla^2\sigma_i
       = \frac{\nabla^2\sigma_i^2-2\:\nabla{\sigma_i}^{\top}\:\nabla{\sigma_i}}{                 2\:\sigma_i}
       = \frac{H\left(V_i\right)-2\:\nabla{\sigma_i}^{\top}\:\nabla{\sigma_i} }{                 2\:\sigma_i}.
\end{align*}
Define 
$\left\{G=\nabla{{V_i}},\:H=\nabla^2 V_i;G1=\nabla{{\sigma_i}},\:H1=\nabla^2\sigma_i\right\}$.
Our analysis leads to Algorithm \ref{alg:GandH}. 
%
\vspace{-3mm}
\begin{algorithm}[h]
\caption{Computation of the Gradient and the Hessian of $\sigma_i$ for a vector $p_s$.}
\label{alg:GandH}
\begin{algorithmic}
\STATE{Step 1: Specify a $p_s\in P^+$.}
\STATE{Step 2: Compute the matrices $U,\:\Lambda^{\dag}$ 
by Procedure~\ref{proc:computationcontrolobjectivefunction} Step 1;
the matrix $E$ by subsection \ref{gradientandHessianofV}; 
the matrix $W^{(0)}=W(p_s)$ by Lemma \ref{Vaindiff}.}
\FOR{$k=1,\:2,\:\cdots,\:n^+-1$}
\STATE{1. Compute the weight matrices, $W^{(1)},\:W^{(2)}$, by Equation (\ref{W1W2}).}
\STATE{2. Compute the transformation matrices, $U{\left(0\right)},\:U^{\left(1\right)},\:U^{\left(2\right)}$, by Remark (\ref{remarkonU}).}
\STATE{3. Compute the system, input and output matrices, $J_d^{(0)},\:J_d^{(1)},\:J_d^{(2)},\:K_d^{(0)},\:K_d^{(1)},\:K_d^{(2)}$,\\\qquad \qquad$C_d^{(0)},\:C_d^{(1)},\:C_d^{(2)}$, by Equations (\ref{Cd0Cd1Cd2Kd0Kd1Kd2}), (\ref{Ad0Ad1Ad2}).}
\STATE{4. Solve the Lyapunov equations of Equation (\ref{lyapequation}) to obtain $Q^{(0)},\:Q^{(1)},\:Q^{(2)}$.}
\STATE{5. Compute the component of the gradient vector and the diagonal element of the Hessian matrix, $G(k),\:H(k,k)$, by Equation (\ref{GradientandHessian}).}
\ENDFOR
\FOR{$k\:=\:1,\:2,\:\cdots,\:n^+-2$}
\FOR{$j\:=\:k+1,\:k+2,\:\cdots,\:n^+-1$}
\STATE{1. Repeat the steps 1-4 of the former loop.}
\STATE{2. Compute the off-diagonal element of the Hessian matrix by, $2\:H(k,j)\leftarrow$ Equation (\ref{GradientandHessian})$-H\left(k,k\right)-H\left(j,j\right)$.}
\ENDFOR
\ENDFOR
\STATE{Step 3: Compute $G_1=\frac{G}{2\:\sigma_i},\:H_1=\frac{H-2\:G_1^\top\:G_1}{2\:\sigma_i}$.}
\RETURN $G_1,\:H_1.$
\end{algorithmic}
\end{algorithm}
\subsection{A steepest descent method to approximate a local minimizer of the control objective function}\label{sec:steepest}
The reader will find in this section:
A description of the search for the steepest descent direction, 
primarily for Case 1.1, with comments provided for the Cases 1.2 and 2.
A description of the algorithm using the projected subgradient method 
to compute the steepest descent direction.
A description of the line search along the steepest descent direction.
A proof that the line search requires only a finite number of steps.
\begin{algorithm}[t]
\caption{A steepest descent method to approximate a local minimizer of the control objective function $f$.}
\label{alg:steep}
\begin{algorithmic}
\STATE{Step 1: Choose an initial power supply vector $p_s^{(0)}$ within the interior of the polytope $P^+$, denoted as $p_s^{(0)}\in \text{interior}(P^+)$.}
\STATE{Step 2: Compute the direction vector $v^{(i)}$ which satisfies, $f'\left(p_s^{(i)},v^{(i)}\right)=\min\limits_{b_1\leq A_1\left(p_s^{(i)}+v\right),~p_s^{(i)}+v\leq b_2}{f'\left(p_s^{(i)},v\right)},\:i\in \mathbb{N}.$  To accomplish this, one can apply Algorithm (\ref{alg:prosub}), with the formulas for matrices $A_1$, $b_1$, and $b_2$ as outlined in Equation (\ref{DODEFINITION2}).}
\STATE{Step 3: Set $t=1$ and specify two parameters $0<\alpha<0.5,\:0<\beta<1$. While $f\left(p_s^{(i)}+t\cdot v^{(i)}\right)>f\left(p_s^{(i)}\right)+\alpha\cdot t\cdot f'(p_s^{(i)},v^{(i)})$, then update $t= \beta\cdot t$.}
\STATE{Step 4: Set $p_s^{(i+1)}=p_s^{(i)}+t\cdot v^{(i)}$.}
\RETURN to Step 2.
\STATE{Determine whether to break or continue the loop based on the following cases, 
\begin{itemize}
\item Case 5.1: If $f'\left(p_s^{(i)},v^{(i)}\right)=0$ and $v^{(i)}=0$, then $p_s^{(i)}$ is a local minimizer and stop the procedure. 
\item Case 5.2: If $f'\left(p_s^{(i)},v^{(i)}\right) = 0$ and $v^{(i)} \neq 0$, then for $\xi \in (0, ~1)$ according to Def. \ref{def:xi}, compute $f''\left(p_s^{(i)} + \xi \cdot v^{(i)}, v^{(i)}\right)$. (1) If $f''\left(p_s^{(i)} + \xi \cdot v^{(i)}, v^{(i)}\right) > 0$, then $p_s^{(i)}$ is a local minimizer, and the procedure is stopped. (2) Otherwise, if $f''\left(p_s^{(i)} + \xi \cdot v^{(i)}, v^{(i)}\right) < 0$, then $p_s^{(i)}$ is an inflection point. In this case, let $p_s^{(i+1)} = p_s^{(i)} + v^{(i)}$ and return to Step 2.
\item Case 5.3: If $f'(p_s^{(i)},v^{(i)})<0$, continue with Step 3.
\end{itemize}}
\STATE{{\bf Stop criterion}: If $f\left(p_s^{(i)}\right) - f\left(p_s^{(i+1)}\right) \leq \epsilon$, where $\epsilon = 10^{-6}$, then stop the iteration, and $p_s^{(i+1)}$ can be recognized as the approximation of a local minimizer. }
\end{algorithmic}
\end{algorithm}
\begin{algorithm}[t]
\caption{A project subgradient method to compute the steepest descent direction at the iteration step $i$ of Algorithm \ref{alg:steep}.}
\label{alg:prosub}
\begin{algorithmic}
\STATE{{\bf Objective}: Compute a vector $v^*$ such that $f'\left(p_s^{(i)},v^*\right)=\min\limits_{b_1\leq A_1\left(p_s^{(i)}+v\right),\:p_s^{(i)}+v\leq b_2}{f'\left(p_s^{(i)},v\right)}$. .}
\STATE{Step 1: Define $f_d\left(v\right)=f'\left(p_s^{(i)},v\right): \mathbb{R}^{n^+-1}\to\mathbb{R}$. Set $\alpha_j=1/j^\gamma,\:0<\gamma<1$. }
\STATE{Step 2: Choose the initial vector $v_0$, which satisfies $b_1\leq A_1(p_s^{(i)}+v_0),~p_s^{(i)}+v_0\leq b_2$.}
\STATE{Step 3: Execute $v_{j+1}=P\left(p_s^{(i)}+v_{j}-\alpha_j\: g_j\right)-p_s^{(i)}$. (Here, $g_j$ represents the subgradient of $f'\left(p_s^{(i)}, v\right)$ at $v_j$, and $P: \mathbb{R}^{n^+-1} \rightarrow P^+$ is the Euclidean projection operator.}
\STATE{Step 4: Define $f_{d,min}^{j}=\min{\left\{f_{d,min}^{j-1},~f_d\left(v_{j}\right)\right\}}\Rightarrow f_{d,min}^{j}=\min{\left\{f_d\left(v_1\right), \cdots,f_d\left(v_j\right)\right\}}$.}
\STATE{{\bf Assumption 5.3.1}: This problem is a piecewise linear program with linear constraints. Due to the origin, the minimizer satisfies $f'(p_s^{(i)},v^*)\leq 0$. In the case where  $f'\left(p_s^{(i)},v^*\right)=0$, we assume that there exists at most one segment which satisfies this condition. The reason is that if there exists two or more segments which satisfy this equality, then the convex combination of these line segments will also satisfy $f'\left(p_s^{(i)},v\right)=0$, which we exclude.}
\STATE{{\bf Remark 5.3.2}: By solving a linear constraints quadratic program $\min\limits_x{\frac{1}{2}\:\|x-y\|_2^2},b_1\leq A_1\:y,y\leq b_2$ through a dynamical system that is exponentially convergent \cite{zhang2002dual}, the Euclidean projection can be programmed as an operator. Reviews of projection algorithms can be found in \cite{bauschke1996projection}. Additionally, readers can explore another method called the Charged Balls Method for projecting a vector onto a convex set in \cite[Ch. 1]{danilova2022recent}.}
\STATE{{\bf Stop criterion}: It has been established in \cite{boyd2003subgradient} that $\lim\limits_{j\to \infty}{f_{d,min}^{j+1}}=f_{d}^*\in\mathbb{R}$. If the iteration number $j$ is sufficiently large,  for instance, when $j>500$, we can stop the iteration and specify a $v_j$ such that $f_{d}\left(v_j\right)=f_{d}^{*}$. Note that $v^*=v_j$ may not be a unique solution of the preceding equation, and in accordance with {\em Assumption 5.3.1}, if $f_{d}^{*}=0$, then $v^*=0$ or $v^*$ lies on a line segment.}
\end{algorithmic}
\end{algorithm}
%
Algorithm \ref{alg:steep} 
to approximate a local minimizer of the control objective function 
is based on the steepest descent method,
\cite[Section 9.4]{boyd2004convex}.
The concept of a {\em subgradient} is defined
that is used in the computation of the steepest descent direction. 
%
\begin{definition}\label{subgradient}
Let $g: \mathbb{R}^n \rightarrow \mathbb{R}$ be a convex function.
If at $x \in \mathbb{R}^n$,
a vector $h \in \mathbb{R}^n$ satisfies that
$g\left(y\right) \geq g\left(x\right) + h^{\top} \left(y - x\right), \forall ~y \in \mathbb{R}^n$,
then the {\em subgradient of $g$ at $x$} exists, and it
is equal to the vector $h$, which
is denoted by $\partial g\left(x\right) = h$.
\end{definition}
The following definition will be used in the proof that the line search requires only a finite number of steps and the evaluation of an inflection point.
\begin{definition}\label{def:xi}
Define a function $g:\mathbb{R}_{s+}\to \mathbb{R}$ with respect to $\tau$, by $g(\tau)=f(p_s+\tau v)$. It follows from Defs. \ref{def.firstorderderivative} and \ref{directionalderivative2} that, $g(\tau)=g(0)+g'(0^+)\tau+ \frac{1}{2}g''(0^+)\tau^2+o(\tau^2)~\mbox{or},~
g(\tau)=g(0)+g'(0^+)\tau+ \frac{1}{2}g''(\xi)\tau^2,~\xi\in(0,\tau), ~
\mbox{where,}~g'(0^+)=f'(p_s,v)~\mbox{and}~g''(0^+)=f''(p_s,v).$
\end{definition}

\subsubsection*{Steepest descent in Case 1.1}
For a given vector $p_s^{(i)}$, 
where $p_s^{(i)}\in P_{(k),nz}^+$ and $p_s^{(i)}\in \interior(P^+)$, 
with $i\in \mathbb{N}$ representing the iteration step 
in Algorithm \ref{alg:steep}, 
the steepest descent direction is,
$-\nabla f\left(p_s^{(i)}\right)=-\left(\nabla f_{as,k}\left(p_s^{(i)}\right)+r\cdot\nabla\sigma_k\left(p_s^{(i)}\right)\right)$. 
In this case, there exists a unique positive value $t_i^* > 0$ such that
$p_s^{(i)} - t\cdot\nabla f\left(p_s^{(i)}\right) \notin P^+$ for $t > t_i^*$, and
$p_s^{(i)} - t\cdot\nabla f\left(p_s^{(i)}\right) \in P^+$ for $t \leq t_i^*$,
which means that the vector 
$p_s^{(i)} - t_i^*\cdot \nabla f\left(p_s^{(i)}\right)$ 
lies on one of the facets of the polytope $P^+$. 
Furthermore, the value of $t_i^*$ can be easily computed 
using the constraints of the domain $P^+$ defined in Def. \ref{dom:DODEFINITION2}.
In conclusion, 
 the steepest descent direction $v$ under constraints in this case equals: 
$-t_i^*\cdot\left(\nabla f_{as,k}\left(p_s^{(i)}\right)+r\cdot\nabla\sigma_k\left(p_s^{(i)}\right)\right)$.
\par
For a vector $p_s^{(i)}\in P_{(k),nz}^+$ and $p_s^{(i)}$ lies on the boundary of $P^+$, 
denoted as $p_s^{(i)}\in \partial P^+$, 
we encounter a scenario where the direction vector
of the control objective function at the vector $p_s^{(i)}$ 
must point into the interior of the polytope $P^+$.
For the vector $p_s^{(i)}$ which is at the corner of the polytope $P^+$,
 the direction vector of the control objective function 
is particularly challenging to express by using mathematical formulas. 
Therefore, to handle these extreme cases, 
we opt for a projected subgradient method, 
as presented in Algorithm \ref{alg:prosub}, which was 
first put forward in \cite{boyd2003subgradient}. 
Moreover, at Algorithm \ref{alg:prosub} Step 2,
the subgradient satisfies, $g_j=\nabla f_{as,k}\left(p_s^{(i)}\right)+r\cdot\nabla\sigma_k\left(p_s^{(i)}\right),\:\forall\:j\in\mathbb{N}$ in this case, 
and the more concrete expression of $g_j$ can be found in Proposition \ref{prop:firstdirectionalderivative} (a).
%
\subsubsection*{Steepest descent in the Cases 1.2 and 2}
Algorithm \ref{alg:prosub} 
will be employed to compute the steepest descent direction in Case 1.2.
Consider an iteration step $v_j$, we need to get the expression of
the subgradient of the directional derivative function $f'\left(p_s^{(i)},v\right)$ at $v_j$, denoted as $g_j$ in Step 2. 
It follows Proposition  \ref{prop:firstdirectionalderivative}, 
$f'\left(p_s^{\left(i\right)},v_j\right)=f_k'\left(p_s^{\left(i\right)},v_j\right)=|A\left(k\right)~v_j|+r\cdot\nabla\sigma_k\left(p_s^{\left(i\right)}\right) v_j$, 
the subgradient $g_j$ satisfies,
\vspace{-2mm}
\begin{equation}\label{eqn.subgradient}
g_j=\begin{cases}\left(\theta \cdot A\left(k\right)+\left(1-\theta\right)\cdot \left(-A\left(k\right)\right)+r\cdot \nabla \sigma_k\left(p_s^{\left(i\right)}\right)\right)^\top,~\forall~\theta\in [0,1], ~\mbox{if}~A\left(k\right) ~v_j=0;\\
 \left(A\left(k\right)+r\cdot \nabla \sigma_k\left(p_s^{\left(i\right)}\right)\right)^\top,\mbox{if}~A\left(k\right)~  v_j>0;
 \left(-A\left(k\right)+r \cdot\nabla \sigma_k\left(p_s^{\left(i\right)}\right)\right)^\top,\mbox{if}~A\left(k\right)~v_j<0.
 \end{cases}
 \end{equation}
\par
Consider Case 2 and an iteration step $v_j$ in Algorithm \ref{alg:prosub}. 
As indicated in Proposition \ref{prop:firstdirectionalderivative}, 
we have $f'\left(p_s^{\left(i\right)},v_j\right)=\max\limits_{k\in I_{max}\left(p_s^{\left(i\right)}\right)}{f_k'\left(p_s^{\left(i\right)},v_j\right)}$. 
To determine the subgradient $g_j$ of the function 
$f'\left(p_s^{\left(i\right)},v\right)$ at $v_j$, 
one needs to specify a $k\in I_{max}\left(p_s^{\left(i\right)}\right)$ 
such that 
$f'\left(p_s^{\left(i\right)},v_j\right) = f_k'\left(p_s^{\left(i\right)},v_j\right)$. 
Then the subgradient $g_j$ 
can be determined by Equation (\ref{eqn.subgradient}) or Case 1.1.
%
\begin{proposition}\label{linesearchfinitesteps}
Step 3 of Algorithm~\ref{alg:steep} 
can be executed in a finite number of steps.
\end{proposition}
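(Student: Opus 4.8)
The statement to prove is that the backtracking line search in Step 3 of Algorithm~\ref{alg:steep} terminates after finitely many updates $t \leftarrow \beta t$. The plan is to reduce the claim to a one-dimensional statement about the scalar function $g(\tau) = f(p_s^{(i)} + \tau v^{(i)})$ introduced in Definition~\ref{def:xi}, and then use the first-order expansion established there together with the fact that the search direction $v^{(i)}$ is a strict descent direction (Case~5.3 of Algorithm~\ref{alg:steep}, i.e. $f'(p_s^{(i)}, v^{(i)}) < 0$, which is exactly the branch in which Step~3 is entered).

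First I would fix the iteration index $i$ and abbreviate $p = p_s^{(i)}$, $v = v^{(i)}$, $d_0 = f'(p,v) < 0$, and $g(\tau) = f(p + \tau v)$, so the Armijo test ``$f(p + t v) > f(p) + \alpha\, t\, f'(p,v)$'' reads $g(t) > g(0) + \alpha\, t\, d_0$. By Definition~\ref{def:xi} one has the expansion $g(\tau) = g(0) + d_0 \tau + o(\tau)$ as $\tau \downarrow 0$ (the second-order term is $O(\tau^2)$, hence absorbed into $o(\tau)$). Therefore
\begin{align*}
\frac{g(\tau) - g(0) - \alpha\, \tau\, d_0}{\tau} = (1-\alpha)\, d_0 + o(1) \quad \text{as } \tau \downarrow 0,
\end{align*}
and since $(1-\alpha) d_0 < 0$ (because $0 < \alpha < 0.5$ and $d_0 < 0$), there exists $\bar\tau > 0$ such that $g(\tau) \le g(0) + \alpha\, \tau\, d_0$ for all $\tau \in (0, \bar\tau]$. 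I would note here that the existence of the directional expansion requires $p + \tau v$ to stay in $P^+$ for small $\tau$; this is guaranteed because Step~1 and the construction of $v^{(i)}$ in Step~2 (via Algorithm~\ref{alg:prosub}, whose feasibility constraint is $p_s^{(i)} + v \in P^+$ and whose iterates stay feasible under the projection $P$) ensure $p + t v \in P^+$ for all $t \in [0,1]$, and the backtracking only decreases $t$.

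Next I would translate ``there exists $\bar\tau$'' into ``finitely many backtracking steps.'' The sequence of trial stepsizes is $t_\ell = \beta^\ell$ for $\ell = 0, 1, 2, \dots$, starting from $t_0 = 1$. Since $0 < \beta < 1$, we have $\beta^\ell \to 0$, so there is a smallest $\ell^* \in \mathbb{N}$ with $\beta^{\ell^*} \le \bar\tau$; for that $\ell^*$ the Armijo condition is satisfied and the while-loop exits. Hence the loop performs at most $\ell^* = \lceil \log \bar\tau / \log \beta \rceil$ (when $\bar\tau < 1$; zero steps otherwise) updates, which is finite. This completes the argument.

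The main obstacle is a definitional one rather than a computational one: one must be certain that the first-order directional expansion of $g$ at $\tau = 0^{+}$ genuinely holds at the point $p_s^{(i)}$ in whichever of Cases~1.1, 1.2, or~2 applies. For Cases~1.1 and~1.2 this is Proposition~\ref{prop:firstdirectionalderivative} together with Proposition~\ref{prop:seconddirectionalderivative}, so $g(\tau) = g(0) + d_0\tau + O(\tau^2)$ is immediate. For Case~2, $g$ is a finite maximum of functions each admitting such an expansion, so $g(\tau) = g(0) + \big(\max_{k \in I_{max}(p_s^{(i)})} f_k'(p_s^{(i)},v)\big)\tau + o(\tau) = g(0) + d_0 \tau + o(\tau)$, which is all that is used; the possible failure of twice-semidifferentiability (Lemma~\ref{objectnottwicesemidiffe}) is irrelevant here because only the first-order term enters the Armijo test. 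I would make this case distinction explicit but brief, since the remainder of the proof is identical in all three cases.
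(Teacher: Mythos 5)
Your proof is correct, and it takes a genuinely lighter route than the paper's. The paper works from the second-order expansion of Definition~\ref{def:xi}: it bounds the second directional derivative $f''(p_s+\xi v, v)$ uniformly by a constant $M_1$ (via eigenvalue bounds on the Hessian terms of Proposition~\ref{prop:seconddirectionalderivative}), obtains $f(p_s+tv)\le f(p_s)+t f'(p_s,v)+\tfrac{M_1}{2}t^2$, and solves the resulting quadratic inequality to get the explicit acceptance threshold $t\le \tfrac{2}{M_1}(\alpha-1)f'(p_s,v)$, hence the explicit lower bound $t>\tfrac{2\beta}{M_1}(\alpha-1)f'(p_s,v)$ on the accepted step. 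You instead use only the defining first-order expansion $g(\tau)=g(0)+f'(p,v)\tau+o(\tau)$ and the sign argument $(1-\alpha)f'(p,v)<0$, which already forces the Armijo test to pass for all sufficiently small $\tau$. What each buys: your argument is more elementary and more robust --- it needs only existence of the first directional derivative (guaranteed in all of Cases 1.1, 1.2 and 2 by Proposition~\ref{prop:firstdirectionalderivative}) and sidesteps the mean-value form $g(t)=g(0)+g'(0^+)t+\tfrac{t^2}{2}g''(\xi)$, whose validity for this nonsmooth $f$ is the most delicate point of the paper's version; the paper's argument, in exchange, yields a quantitative lower bound on the accepted step size, which is the kind of estimate one typically wants downstream for convergence-rate analysis. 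Your explicit handling of feasibility ($p+tv\in P^+$ for $t\in[0,1]$) and of the Case~2 maximum is a welcome addition that the paper leaves implicit.
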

\begin{proof}
In this proof, we use the notations $p_s,\:v$ to substitute $p_s^{\left(i\right)},\:v^{\left(i\right)}$ in order to show the generality. 
Consider the $\xi\in (0,1)$, according to Def. \ref{def:xi},
 \begin{eqnarray*}
 f''\left(p_s+\xi \cdot v,v\right)&=&\max\limits_{i\in I_{max}\left(p_s+\xi \cdot v,v\right)}{f_i''\left(p_s+\xi\cdot v,v\right)}, 
~\Rightarrow\\
f_i''\left(p_s+\xi\cdot v,v\right)&\leq& \begin{cases}\lambda_{max}\left(\left(1-\left(A\left(k\right)\cdot \left(p_s+\xi\cdot v\right)+b_k\right)^2\right)^{-\frac{3}{2}}\cdot|A\left(k\right)\:\left(p_s+\xi\cdot v\right)+b_k|\cdot   \right.\\
  \left. 
    A
    \left(
      k
    \right)^\top 
    A
    \left(
     k
    \right) 
    + r \cdot \nabla^2 \sigma_k
    \left(
      p_s + \xi \cdot v
    \right) 
  \right) \cdot \|v\|_2^2,~\\
  \qquad\qquad\qquad\qquad\qquad
  \text{if} ~ A
  \left(
    k
  \right) 
  \left(
    p_s + \xi \cdot v
  \right) + b_k \neq 0;\\
  r \cdot \lambda_{max}
  \left(
    \nabla^2 \sigma_i
    \left(
      p_s + \xi \cdot v
    \right)
  \right) \cdot \|v\|_2^2, ~ \\
  \qquad\qquad\qquad\qquad\qquad
  \text{if} ~
  A
  \left(k
  \right)~ 
  \left(
    p_s + \xi \cdot v
  \right) + b_k= 0.
\end{cases}. 
\end{eqnarray*}
Hence, $\exists~M_1>0,~\forall~p_s\in P^+, p_s+v\in P^+,\xi\in (0,1)~\mbox{such that}~ f''\left(p_s+\xi\cdot v,v\right)\leq M_1$ and 
\begin{displaymath}
 f\left(p_s+t\cdot v\right)=f\left(p_s\right)+t\cdot f'\left(p_s,v\right)+\frac{t^2}{2}\:f''\left(p_s+\xi \cdot v,v\right)\leq f\left(p_s\right)+t\cdot f'\left(p_s,v\right)+\frac{M_1}{2}t^2. 
\end{displaymath}
 By Algorithm \ref{alg:prosub}, we can obtain a direction $v^*$, such that $f'\left(p_s,v^*\right)<0$. With $\alpha\in \left(0,0.5\right)$,
\begin{align}
&f\left(p_s\right)+t\cdot f'\left(p_s,v^*\right)+\frac{M_1}{2}\:t^2\leq f\left(p_s\right)+\alpha \cdot t \cdot f'\left(p_s,v^*\right)\Rightarrow \nonumber\\
 &\frac{M_1}{2}t^2+ t\cdot f'\left(p_s,v^*\right)-\alpha \cdot t \cdot f'\left(p_s,v^*\right)\leq 0\label{inq:linesearch}.
\end{align}

This inequality can be satisfied if and only if, $0\leq t\leq \frac{2}{M_1}\cdot \left(\alpha-1\right)\cdot f'\left(p_s,v^*\right)$.
In conclusion, the loop of step 4 of Algorithm (\ref{alg:steep}) can be stopped after a finite number of iterations with a $t$ which satisfies either $t=1$~\mbox{ or}~ $t>\frac{2\beta}{M_1}\cdot \left(\alpha-1\right)\cdot f'\left(p_s,v^*\right)$.
\end{proof}
\subsection{Convergence analysis}\label{subsec:convergence}
An approximation sequence having been generated as described above, 
the natural question arises: 
Does the approximation sequence generated above converge to a minimum?
We introduce the convergence analysis theorem as follows,
\begin{theorem}\label{convergenceanalysis}
\begin{itemize}
\item[(a)] 
Algorithm 5.2 terminates after a finite number of steps.
\item[(b)] 
Algorithm (\ref{alg:steep}) generates a sequence 
$\left\{p_s^{(k)}\in P^+,\forall ~k\in \mathbb{N}\right\}$ 
such that $\lim\limits_{k\to \infty}{f\left(p_s^{\left(k\right)}\right)}=f^*$, 
where $f^*$ is a local minimum.
\end{itemize}
\end{theorem}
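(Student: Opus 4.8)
The plan is to handle both claims from a single observation: the sequence of objective values produced by Algorithm~\ref{alg:steep} is nonincreasing and bounded below. First I would record the \emph{descent property}. At iteration $i$ the direction $v^{(i)}$ returned by Algorithm~\ref{alg:prosub} satisfies $f'(p_s^{(i)},v^{(i)})=\min\{f'(p_s^{(i)},v):p_s^{(i)}+v\in P^+\}\le 0$, since $v=0$ is feasible and $f'(p_s^{(i)},0)=0$; and the Armijo back-tracking of Step~3 accepts a step $t_i>0$ with $f(p_s^{(i+1)})=f(p_s^{(i)}+t_i v^{(i)})\le f(p_s^{(i)})+\alpha t_i f'(p_s^{(i)},v^{(i)})\le f(p_s^{(i)})$ (the inflection move of Case~5.2(2) being likewise a strict decrease past the inflection). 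Hence $\{f(p_s^{(i)})\}_{i\in\mathbb N}$ is nonincreasing and bounded below by $\inf_{P^+}f$, which is finite, and in fact $\ge 0$, by Theorem~\ref{existence} and Definition~\ref{def:controlobjectivefunction}. Therefore $f(p_s^{(i)})\downarrow f^*$ for some $f^*\in\mathbb R$.

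For part~(a) I would argue finiteness on two levels. Each outer iteration terminates in finitely many operations: the direction search (Algorithm~\ref{alg:prosub}) halts by its stopping criterion, the line search (Step~3) halts after finitely many halvings by Proposition~\ref{linesearchfinitesteps}, and the branching of Cases~5.1--5.3 is resolved by one evaluation of $f'$ and, in Case~5.2, one evaluation of $f''$ at the point of Def.~\ref{def:xi}. For the outer loop, since $\{f(p_s^{(i)})\}$ converges it is Cauchy, so $f(p_s^{(i)})-f(p_s^{(i+1)})\to 0$; thus for all sufficiently large $i$ the gap falls below the tolerance $\epsilon$ and the stopping criterion fires (and if Case~5.1 or Case~5.2(1) is met earlier, the algorithm stops sooner, with a declared local minimizer). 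This proves (a).

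For part~(b), if the algorithm halts at some step $K$ through Case~5.1 or Case~5.2(1), then $p_s^{(K)}$ is a local minimizer by construction and, setting $p_s^{(k)}=p_s^{(K)}$ for $k\ge K$, the limit $f^*=f(p_s^{(K)})$ is a local minimum. In the remaining case there are infinitely many back-tracking steps, and from the Armijo inequality $f(p_s^{(k)})-f(p_s^{(k+1)})\ge \alpha\,t_k\,|f'(p_s^{(k)},v^{(k)})|$, whose left-hand side tends to $0$; combining this with the step-size lower bound $t_k\ge\min\{1,\;\tfrac{2\beta(1-\alpha)}{M_1}|f'(p_s^{(k)},v^{(k)})|\}$ extracted in the proof of Proposition~\ref{linesearchfinitesteps} forces $f'(p_s^{(k)},v^{(k)})\to 0$. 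Since $P^+$ is compact (Proposition~\ref{thmdomain}), pass to a subsequence $p_s^{(k_j)}\to\bar p\in P^+$; by continuity of $f$ (Proposition~\ref{ObjectLC}), $f(\bar p)=f^*$. Using the explicit first directional derivatives of Proposition~\ref{prop:firstdirectionalderivative}, which depend on $p_s$ through $\nabla f_{as,k}$, $\nabla\sigma_k$ and the index set $I_{max}(p_s)$, one shows that $v\mapsto\min\{f'(p_s,v):p_s+v\in P^+\}$ is lower semicontinuous at $\bar p$, whence $\min\{f'(\bar p,v):\bar p+v\in P^+\}=0$, i.e. $\bar p$ is a first-order stationary point. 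Finally I would upgrade stationarity to local minimality using the second directional derivatives of Proposition~\ref{prop:seconddirectionalderivative} together with Assumption~\ref{assumption:noneighborhoodzero}: the latter excludes flat neighbourhoods, so an isolated stationary point with nonnegative second directional derivative in every feasible direction is a local minimizer, the remaining degenerate directions ($f'=0$, $v\ne 0$) being exactly those screened by Case~5.2 via a neighbourhood check. Hence $f^*=f(\bar p)$ is a local minimum.

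The main obstacle is this last step: because $f$ is nonconvex and, by Lemma~\ref{objectnottwicesemidiffe}, not even twice semidifferentiable, no standard second-order sufficient condition applies, so passing from ``$\bar p$ is stationary'' to ``$\bar p$ is a local minimizer'' genuinely requires the neighbourhood investigation flagged in the remark following Lemma~\ref{objectnottwicesemidiffe}, carried out uniformly over the strata Case~1.1, Case~1.2 and Case~2. A secondary difficulty is the lower-semicontinuity argument needed to transfer $f'(p_s^{(k)},v^{(k)})\to 0$ to the limit $\bar p$ when $\bar p$ lies on the nonsmooth set where $I_{max}$ jumps or where some $A(k)p_s+b_k$ vanishes.
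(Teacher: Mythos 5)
Your proposal is correct in its core and that core coincides with the paper's argument: the Armijo back-tracking yields a decreasing sequence of objective values, Theorem~\ref{existence} supplies a lower bound, and the monotone convergence theorem gives $f(p_s^{(k)})\downarrow f^*$. Beyond that the two proofs diverge in scope. The paper reads (a) and (b) as a dichotomy — part (a) is disposed of in one line by saying termination occurs exactly through Cases~5.1 and~5.2(1), and part (b) simply labels the monotone limit $f^*$ ``a local minimum'' without further justification. You instead prove (a) by combining finiteness of each inner step (Algorithm~\ref{alg:prosub}'s stopping rule, Proposition~\ref{linesearchfinitesteps} for the line search) with the observation that a convergent sequence is Cauchy, so the $\epsilon$-tolerance criterion must eventually fire; and for (b) you attempt the genuinely missing piece, namely that the limit is a \emph{local minimum}: the Armijo inequality plus the step-size lower bound from Proposition~\ref{linesearchfinitesteps} forces $f'(p_s^{(k)},v^{(k)})\to 0$, compactness of $P^+$ gives an accumulation point, and you then need lower semicontinuity of the directional-derivative minimum and a second-order/neighbourhood argument to conclude. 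You are right to flag those last two steps as the real obstacles — they are not carried out in your sketch, and (notably) they are not carried out in the paper either, so your proposal makes explicit a gap that the published proof papers over rather than introducing a new one. If you complete the lower-semicontinuity argument across the strata of Definition~\ref{def:cases} and the upgrade from stationarity to local minimality under Assumption~\ref{assumption:noneighborhoodzero}, your version would be strictly stronger than the paper's.
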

\begin{proof}
The claim (a) follows directly from Case 5.1 and Case 5.2 (1) of Algorithm  (\ref{alg:steep}).
 \par For the claim (b), if Algorithm \ref{alg:steep} does not break the loop according to Case 5.1 and Case 5.2 (1), then it generates a sequence of values, $\left\{f\left(p_s^{\left(0\right)}\right),f\left(p_s^{\left(1\right)}\right),\cdots|~p_s^{\left(k\right)}\in P^+,\forall~k\in \mathbb{N}\right\}$, which is strictly decreasing. From Theorem \ref{existence}, $\exists~a\in\mathbb{R}$,~such that $f\left(p_s^{\left(k\right)}\right)\geq a,~\forall~k\in \mathbb{N}$.  Therefore, this sequence can be bounded from below. By the monotone convergence theorem \cite[corollary 2.11]{browder2012mathematical}, $\lim\limits_{k\to \infty}{f\left(p_s^{\left(k\right)}\right)}=\inf_{k\in \mathbb{N}}{f\left(p_s^{\left(k\right)}\right)}:=f^*$, where $f^*$ is a local minimum. 
\end{proof}
\par
The convergence rate $c \in \left(0, ~ 1\right)$ 
defined as the smallest positive real number such that\\
$|f\left(p_s^{\left(k\right)}\right) - f\left(p_s^{\left(0\right)}\right)| \leq c^k\cdot | f\left(p_s^{\left(1\right)}\right) - f\left(p_s^{\left(0\right)}\right) |$,
\cite[p. 480]{boyd2004convex}.
The determination of the convergence rate as a function
of the parameters of our problem,
is left for a future investigation.
\subsection{A projected generalized subgradient method to compute an effective initial vector}\label{sec:computationofinitialvector}
%
The steepest descent algorithm introduced in Subsection \ref{sec:steepest} 
is only locally convergent. 
To enhance its effectiveness, 
it is essential to determine an initial vector
which allows the control objective function to decrease significantly. 
A procedure is needed 
to compute an effective initial vector for algorithm \ref{alg:steep}.
\begin{proposition}\label{prop:subgradient}
Based on the principles outlined in 
\cite[theorem 10.27]{clarke2013functional} and 
the first directional derivative provided in 
Proposition \ref{prop:firstdirectionalderivative}, 
the generalized gradient of 
$f_{k}$ for $k\in \mathbb{Z}_{n_E}$ 
at any vector $p_s\in P^+$ equals,
\begin{eqnarray*}
     \partial f_k\left(p_s\right) 
        & = &\partial f_{as,k}\left(p_s\right) +r \cdot\nabla \sigma_k\left(p_s\right)\\
        &=&\left\{
          \begin{array}{l}
           \left( 1 - \left(A\left(k\right)\:p_s + b_k\right)^2 
            \right)^{-\frac{1}{2}}\cdot
            A\left(k\right) 
            \cdot sign\left(A\left(k\right) \:p_s + b_k\right)
          + r \cdot\nabla\sigma_k\left(p_s\right),\\
          \qquad \qquad \qquad  \mbox{if} ~ A\left(k\right) \:p_s  + b_k \neq 0,\\
            \theta \cdot A\left(k\right)+\left(1-\theta\right)\cdot\left(-A\left(k\right)\right)  + r \cdot\nabla \sigma_k\left(p_s\right) ,\mbox{where}~ \theta\in \left(0,1\right), \\
              \qquad \qquad \qquad   \mbox{if} ~ A\left(k\right)\:p_s + b_k = 0.
             \end{array}
         \right.
        \end{eqnarray*}
where, $\partial f_{as,k}\left(p_s\right)$ denotes the generalized gradient of 
$f_{as,k}$ for $k\in \mathbb{Z}_{n_E}$ 
for a vector $p_s\in P^+$.
\end{proposition}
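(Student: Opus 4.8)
The plan is to combine two ingredients: the Clarke sum rule, exploiting that $\sigma_k$ is $C^\infty$ on $P^+$ (Lemma~\ref{Vaindiff}), together with an explicit computation of the generalized gradient of the $\arcsin(|\cdot|)$ term, for which I would exploit that this term is convex (Lemma~\ref{gconvex}). First I would reduce to the nonsmooth summand: since $\sigma_k\in C^\infty(P^+)$ it is in particular strictly differentiable, so its generalized gradient at any $p_s\in P^+$ is the singleton $\{\nabla\sigma_k(p_s)\}$, and hence that of $r\cdot\sigma_k$ is $\{r\cdot\nabla\sigma_k(p_s)\}$. Because $f_{as,k}$ is Lipschitz continuous on $P^+$ (Lemma~\ref{PhDiLC}) and $r\cdot\sigma_k$ is strictly differentiable, the Clarke sum rule applies \emph{with equality} \cite[Thm.~10.27 and subsequent calculus]{clarke2013functional}, giving $\partial f_k(p_s)=\partial f_{as,k}(p_s)+r\cdot\nabla\sigma_k(p_s)$. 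It then remains only to identify $\partial f_{as,k}(p_s)$ in the two cases.

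For $p_s$ with $A(k)\,p_s+b_k\neq 0$, the function $f_{as,k}$ is a composition of $\arcsin$, the absolute value away from $0$, and an affine map, hence $C^\infty$ in a neighbourhood of $p_s$; therefore $\partial f_{as,k}(p_s)=\{\nabla f_{as,k}(p_s)\}$, and the chain rule together with $\tfrac{d}{dt}\arcsin(|t|)=\mathrm{sign}(t)\,(1-t^2)^{-1/2}$ for $t\neq 0$ yields the first branch of the claimed formula after adding $r\cdot\nabla\sigma_k(p_s)$.

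For $p_s$ with $A(k)\,p_s+b_k=0$ I would use convexity. A convex function is Clarke regular, so its generalized gradient coincides with the convex subdifferential, i.e. $\partial f_{as,k}(p_s)=\{h:\langle h,v\rangle\le f_{as,k}'(p_s,v)\ \text{for all }v\}$. By Proposition~\ref{prop:firstdirectionalderivative}(b) — equivalently the limit computed in its proof via $\arcsin(x)=x+O(x^3)$ and L'Hospital — one has $f_{as,k}'(p_s,v)=|A(k)\,v|$, which is precisely the support function of the segment $\{\zeta\cdot A(k)^\top:\zeta\in[-1,1]\}$. A short linear-algebra argument finishes this: $\langle h,v\rangle\le|A(k)\,v|$ for all $v$ forces (applying it to $v$ and $-v$) $|\langle h,v\rangle|\le|A(k)\,v|$, hence $h$ vanishes on $\ker A(k)$ and so $h=\zeta\cdot A(k)^\top$, and then $|\zeta|\le 1$. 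Writing $\zeta=\theta-(1-\theta)$ with $\theta\in[0,1]$ recovers the second branch $\theta\cdot A(k)+(1-\theta)\cdot(-A(k))+r\cdot\nabla\sigma_k(p_s)$. Substituting both branches into $\partial f_k(p_s)=\partial f_{as,k}(p_s)+r\cdot\nabla\sigma_k(p_s)$ completes the proof.

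The main obstacle is not any single estimate but getting the nonsmooth calculus exactly right: one must invoke the form of Clarke's sum rule that delivers equality (which relies on strict differentiability of the $r\cdot\sigma_k$ summand, supplied by Lemma~\ref{Vaindiff}), and one must justify that on the kink set $\partial f_{as,k}(p_s)$ is the \emph{full} segment rather than only its two endpoints — this is exactly where convexity (Lemma~\ref{gconvex}) does the work, since for a merely Lipschitz composition the Clarke chain rule would yield only an inclusion. I would also flag the minor point that, for $\partial f_k(p_s)$ to be the closed convex set it must be, the parameter in the second branch should range over the closed interval $\theta\in[0,1]$; the open interval written in the statement differs only by its endpoints, which coincide with the one-sided gradient limits of $f_{as,k}$ at the kink.
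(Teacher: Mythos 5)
Your proof is correct and follows exactly the route the paper intends but does not write out: the paper states this proposition without any proof, relying only on the citation to Clarke's calculus and the directional-derivative formulas of Proposition~\ref{prop:firstdirectionalderivative}, and your argument (strict differentiability of $\sigma_k$ from Lemma~\ref{Vaindiff} to get equality in the sum rule, convexity of $f_{as,k}$ from Lemma~\ref{gconvex} to identify the Clarke gradient at the kink with the convex subdifferential of $v\mapsto|A(k)\,v|$, i.e.\ the segment $\{\zeta\cdot A(k)^\top:\zeta\in[-1,1]\}$) supplies precisely the missing details. Your observation that $\theta$ should range over the closed interval $[0,1]$ rather than $(0,1)$ is also well taken, and is consistent with the paper's own later usage in Equation~(\ref{eqn.subgradient}).
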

\begin{proposition}\label{def:sub1}
In accordance with \cite[corollary 4.4]{mordukhovich2013subdifferentials}, 
the generalized subgradient of the control objective function $f$ 
at a specific vector $p_s\in P^+$ 
can be formally characterized as 
$\partial f\left(p_s\right) = \convexhull \left\{ \cup\left\{ \partial f_{k}\left(p_s\right)~|~k \in I_{max}\left(p_s\right)\right\}\right\}$. 
Here, $\convexhull$ denotes the convex hull of the indicated set, and 
the set $I_{max}\left(p_s\right)$ is previously defined in 
Def.~\ref{def:cases}.
\end{proposition}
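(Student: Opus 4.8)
The plan is to view $f=\max_{k\in\mathbb{Z}_{n_E}}f_k$ as a pointwise maximum of finitely many locally Lipschitz functions and to apply the Clarke/Mordukhovich calculus rule for such maxima, which is exactly \cite[corollary 4.4]{mordukhovich2013subdifferentials}. The only delicate point is that this rule produces the stated \emph{equality} $\partial f(p_s)=\convexhull\{\cup\{\partial f_k(p_s)\mid k\in I_{max}(p_s)\}\}$, rather than the weaker inclusion $\partial f(p_s)\subseteq\convexhull\{\cup\,\partial f_k(p_s)\}$, only when each active function $f_k$, $k\in I_{max}(p_s)$, is Clarke regular at $p_s$. So the substance of the proof is a regularity check, after which the formula is immediate.

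First I would record that each $f_k$ is locally Lipschitz on $P^+$: the term $f_{as,k}$ is Lipschitz by Lemma \ref{PhDiLC} and convex by Lemma \ref{gconvex}, while $r\cdot\sigma_k$ is Lipschitz by Lemma \ref{VaLC} and in fact $C^\infty$ by Lemma \ref{Vaindiff}; a finite sum of Lipschitz functions is Lipschitz, hence so is $f=\max_k f_k$. Next I would verify Clarke regularity of each $f_k$ at an arbitrary $p_s\in P^+$. The function $f_{as,k}$ is convex, hence Clarke regular at every point of its domain, and its generalized gradient $\partial f_{as,k}(p_s)$ is the convex subdifferential given explicitly in Proposition \ref{prop:subgradient}. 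The function $r\cdot\sigma_k$ is continuously differentiable, hence strictly differentiable, hence Clarke regular with $\partial(r\cdot\sigma_k)(p_s)=\{r\cdot\nabla\sigma_k(p_s)\}$. Since the sum of two Clarke regular functions is Clarke regular, and since the generalized gradient is additive when one summand is strictly differentiable, I conclude that $f_k$ is Clarke regular at $p_s$ with $\partial f_k(p_s)=\partial f_{as,k}(p_s)+r\cdot\nabla\sigma_k(p_s)$, which is precisely Proposition \ref{prop:subgradient}.

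With all active $f_k$ regular at $p_s$, the max rule applies with equality: $\partial f(p_s)=\convexhull\{\cup\{\partial f_k(p_s)\mid k\in I_{max}(p_s)\}\}$, and moreover $f$ is itself Clarke regular at $p_s$. I would also indicate why only the active indices $I_{max}(p_s)$ of Def.~\ref{def:cases} enter: by continuity of the finitely many $f_j$, any index inactive at $p_s$ stays strictly dominated on a neighborhood of $p_s$, so it cannot influence the local behavior of $f$ and hence does not contribute to $\partial f(p_s)$. As a consistency check I would note that the support function of the right-hand side, evaluated in a direction $v$, reduces to $\max_{k\in I_{max}(p_s)}\max_{g\in\partial f_k(p_s)}\langle g,v\rangle$, which matches the first directional derivative $f'(p_s,v)=\max_{k\in I_{max}(p_s)}f_k'(p_s,v)$ obtained in Proposition \ref{prop:firstdirectionalderivative}, as it must for a regular function.

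The main obstacle I anticipate is the regularity verification: one must be careful that the absolute value inside $f_{as,k}$ does not destroy regularity (it does not, since $f_{as,k}$ remains convex, and the convex subdifferential of $|A(k)\,p_s+b_k|$ at a zero of its argument is the full segment $[-A(k)^\top,A(k)^\top]$, exactly as in Proposition \ref{prop:subgradient}), and that adding the smooth but nonconvex $r\cdot\sigma_k$ preserves regularity so that the \emph{equality} case of the max rule is legitimate and not merely the inclusion. Once regularity is secured, everything else is a routine application of the Clarke/Mordukhovich generalized-gradient calculus.
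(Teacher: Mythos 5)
Your proposal is correct, but it reaches the max-rule equality by a different route than the paper. You verify Clarke regularity of each active $f_k$ (convex $f_{as,k}$ plus strictly differentiable $r\cdot\sigma_k$) and then invoke the classical Clarke max-rule, under which regularity of the active pieces upgrades the inclusion $\partial f(p_s)\subseteq\convexhull\{\cup\,\partial f_k(p_s)\}$ to an equality. The paper instead checks the literal hypothesis of the cited result, namely \emph{uniform subsmoothness} of the $\epsilon$-active family $T_\epsilon(p_s)=\{k\in\mathbb{Z}_{n_E}~|~f_k(p_s)\geq f(p_s)-\epsilon\}$: for $p_a,p_b$ in a ball $\mathbb{B}_\delta(p_s)$ it derives the quantitative lower estimate
\begin{displaymath}
  f_k(p_a)-f_k(p_b)\;\geq\;\langle\partial f_k(p_b),\,p_a-p_b\rangle
  +\tfrac{1}{2}\,r\,\lambda_{min}\bigl(\nabla^2\sigma_k(\xi)\bigr)\,\|p_a-p_b\|_2^2,
\end{displaymath}
using the convex subgradient inequality for $f_{as,k}$ and a second-order Taylor expansion of the $C^\infty$ function $\sigma_k$ at an intermediate point $\xi$; since the minimal Hessian eigenvalue is bounded on the compact ball, the quadratic term is uniformly $o(\|p_a-p_b\|_2)$, which is exactly the subsmoothness condition the corollary requires. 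Both verifications rest on the same structural fact --- each $f_k$ is convex plus $C^2$, hence weakly convex (lower-$C^2$), a class on which Clarke regularity, subsmoothness, and the agreement of the Clarke and limiting subdifferentials all hold --- so the two arguments are interchangeable here. Your route is shorter and leans on the more classical Clarke theorem; the paper's is tailored to the specific reference it cites and makes the quadratic error constant explicit. Your consistency check of the support function against the directional derivative of Proposition \ref{prop:firstdirectionalderivative} is a sensible addition that the paper does not include.
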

\begin{proof}
What is important at this vector is to ascertain whether, 
for every $k\in T_\epsilon \left(p_s\right)$, 
the function $f_k$ exhibits uniform subsmoothness. 
Here, the set $T_\epsilon \left(p_s\right)$ is defined as follows, 
$T_\epsilon \left(p_s\right)=\left\{k\in Z_{n_E}~|~f_k\left(p_s\right)\geq f\left(p_s\right)-\epsilon\right\}$ where, $\epsilon\geq 0$.
Consider a positive real number $\delta>0$ and 
two vectors $p_a$ and $p_b$ 
that belong to the spherical neighborhood $\mathbb{B}_\delta \left(p_s\right)$, defined as
$\mathbb{B}_\delta \left(p_s\right):=\left\{x~|~\|x-p_s\|_2\leq \delta\right\}$.
We introduce an intermediate vector 
$\xi=\theta\cdot p_a+\left(1-\theta\right)\cdot p_b$, 
where $\theta\in [0,1]$. 
The minimal eigenvalue of the Hessian matrix 
$\left(\nabla^2\sigma\left(\xi\right)\right)$ 
is denoted as $\lambda_{min}\left(\nabla^2\sigma\left(\xi\right)\right)$. 
Note that this minimal eigenvalue depends on $\delta$ and can be negative. 
\begin{align*}
     & f_k\left(p_a\right)- f_k\left(p_b\right)=f_{as,k}\left(p_a\right)+r\cdot\sigma_k\left(p_a\right)-f_{as,k}\left(p_b\right)-r\cdot\sigma_k\left(p_b\right) \\
      &\geq \partial f_{as,k}\left(p_b\right)\:\left(p_a-p_b\right)+r\cdot\nabla\sigma_k\left(p_b\right)\:\left(p_a-p_b\right)+\frac{1}{2}\cdot r\cdot \left(p_a-p_b\right)^\top\:\nabla^2\sigma_k\left(\xi\right)\:\left(p_a-p_b\right)\\
                   &\geq \partial f_{as,k}\left(p_b\right)\:\left(p_a-p_b\right)+r\cdot\nabla\sigma_k\left(p_b\right)\:\left(p_a-p_b\right)+\frac{1}{2}\cdot r\cdot\lambda_{min}\left(\nabla^2\sigma_k\left(\xi\right)\right)\cdot\|p_a-p_b\|_2^2\\
    & =\langle\partial f_k\left(p_b\right),p_a-p_b\rangle+\frac{1}{2}\cdot r \cdot\lambda_{min}\left(\nabla^2\sigma_k\left(\xi\right)\right)\cdot\|p_a-p_b\|_2^2.
\end{align*}
\end{proof}
 The functions $f_k,\:k\:\in T_\epsilon \left(p_s\right)$ then satisfy the requirment of the uniform subsmoothness defined in \cite[corollary 4.4]{mordukhovich2013subdifferentials}.
\begin{definition}\label{projectedsubgradientmethod}
Define the recursion of the projected generalized subgradient algorithm as follows,
where $P$ denotes the Euclidean projection,
\begin{align}
     p_s^{\left(k+1\right)}
     & = P\left(z^{\left(k+1\right)}\right)
       = P
	\left(
	  p_s^{ \left(k\right)}
          - \alpha_k\:\partial f\left(p_s^{\left(k\right)}\right)
        \right), \: \forall\: k\in\mathbb{N},
        \label{recursion:project} 
      ~~~ p_s^{\left(0\right)} \in P^+, ~ 
	\alpha_k\in\mathbb{R}_{s,+}. \nonumber 
\end{align}
\end{definition}
\begin{theorem}\label{convergencetheorem1}
Consider the sequence $\left\{p_s^{\left(k\right)}\in P^+,\forall~k\in \mathbb{N}\right\}$ generated by Definition \ref{projectedsubgradientmethod}, and define the minimum of the function values of the finite sequence  as,\\ $f_{min}^{\left(k\right)}=\min{\left\{f\left(p_s^{\left(1\right)}\right),\cdots,f\left(p_s^{\left(k\right)}\right)\right\}}=\min{\left\{f_{min}^{\left(k-1\right)},f\left(p^{\left(k\right)}\right)\right\}}.$
\begin{itemize}
\item [(a)] If the step length sequence satisfies $\sum_{k=1}^\infty{\alpha_k}< \infty$, for example, $\alpha_k=1/k^{1.1}$, then the sequence defined before is convergent, denoted as $\lim_{k\to \infty}{f_{min}^{\left(k\right)}}=f_a^*,\:f^*\in\mathbb{R}_+$. 
\item [(b)]If the step length sequence satisfies $\lim_{k\to \infty}{\alpha_k}=0$, for example, $\alpha_k=1/k^{0.5}$, then the sequence defined before is convergent, denoted as $\lim_{k\to \infty}{f_{min}^{\left(k\right)}}=f_b^*,\:f^*\in\mathbb{R}_+$.
\end{itemize}
\end{theorem}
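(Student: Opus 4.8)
The plan is to treat each part as a direct consequence of the classical convergence theory of the projected (generalized) subgradient method, adapted to our setting where the objective $f$ is Lipschitz continuous on the compact convex polytope $P^+$ (Propositions \ref{thmdomain} and \ref{ObjectLC}) and the generalized subgradients $\partial f(p_s)$ are uniformly bounded in norm. The first step is to record this uniform bound: since $P^+$ is compact, $\nabla\sigma_k$ is continuous on $P^+$ by Lemma \ref{Vaindiff}, and $\partial f_{as,k}$ is bounded on $P^+$ because $A(k)p_s+b_k$ stays in a compact subset of $(-1,+1)$, Proposition \ref{def:sub1} gives a constant $G_s\in\mathbb{R}_{s,+}$ with $\|g\|_2\le G_s$ for every $g\in\partial f(p_s)$ and every $p_s\in P^+$. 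The existence of a minimizer $p_s^\ast$ with value $a=\inf_{P^+}f$ comes from Theorem \ref{existence}.

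Next I would run the standard one-step inequality. Fix the minimizer $p_s^\ast$. Because the Euclidean projection $P$ onto the convex set $P^+$ is nonexpansive and $p_s^\ast\in P^+$,
\begin{align*}
\|p_s^{(k+1)}-p_s^\ast\|_2^2
&\le \|p_s^{(k)}-\alpha_k\,\partial f(p_s^{(k)})-p_s^\ast\|_2^2\\
&= \|p_s^{(k)}-p_s^\ast\|_2^2
 - 2\alpha_k\,\langle \partial f(p_s^{(k)}),\,p_s^{(k)}-p_s^\ast\rangle
 + \alpha_k^2\,\|\partial f(p_s^{(k)})\|_2^2 .
\end{align*}
The subtle point here — and the part I expect to be the main obstacle — is the inner-product term: in the classical convex proof one uses $\langle g,p_s^{(k)}-p_s^\ast\rangle\ge f(p_s^{(k)})-f(p_s^\ast)$, but $f$ is nonconvex, so this fails globally. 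What rescues the argument is the uniform subsmoothness of the active functions $f_k$, $k\in T_\epsilon(p_s^{(k)})$, established in the proof of Proposition \ref{def:sub1}: locally one has $f_k(p_a)-f_k(p_b)\ge\langle\partial f_k(p_b),p_a-p_b\rangle+\tfrac12 r\,\lambda_{\min}(\nabla^2\sigma_k(\xi))\|p_a-p_b\|_2^2$, which yields an approximate subgradient inequality $\langle \partial f(p_s^{(k)}),p_s^{(k)}-p_s^\ast\rangle\ge f(p_s^{(k)})-f(p_s^\ast)-C\|p_s^{(k)}-p_s^\ast\|_2^2$ for a constant $C$ governed by the most negative eigenvalue of the Hessians of the $\sigma_k$ over $P^+$ (finite, by compactness and smoothness). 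I would substitute this into the one-step inequality, absorb the quadratic error using that $\|p_s^{(k)}-p_s^\ast\|_2$ is bounded by the diameter of $P^+$, and obtain
\[
2\alpha_k\bigl(f(p_s^{(k)})-a\bigr)\le \|p_s^{(k)}-p_s^\ast\|_2^2-\|p_s^{(k+1)}-p_s^\ast\|_2^2+\alpha_k^2 G_s^2+2C\,\mathrm{diam}(P^+)^2\,\alpha_k .
\]

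Finally, summing this telescoping-type inequality over $k=1,\dots,N$ and dividing by $2\sum_{k=1}^N\alpha_k$ bounds $f_{\min}^{(N)}-a$ from above by a quantity of order $\bigl(\mathrm{diam}(P^+)^2+G_s^2\sum\alpha_k^2\bigr)/\bigl(\sum\alpha_k\bigr)+C\,\mathrm{diam}(P^+)^2\cdot\bigl(\sum\alpha_k^2\big/\sum\alpha_k\bigr)$; more simply, for part (a) with $\sum_k\alpha_k<\infty$ the right-hand side of the one-step inequality is summable, so $\|p_s^{(k)}-p_s^\ast\|_2^2$ converges and the sequence $\{p_s^{(k)}\}$ stays in $P^+$; since $f_{\min}^{(k)}$ is nonincreasing and bounded below by $a$ (Theorem \ref{existence}), the monotone convergence theorem \cite[corollary 2.11]{browder2012mathematical} gives $\lim_{k\to\infty}f_{\min}^{(k)}=f_a^\ast\in\mathbb{R}_+$. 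For part (b) with $\alpha_k\to 0$ (e.g. $\alpha_k=1/k^{0.5}$), the sequence $f_{\min}^{(k)}$ is again nonincreasing and bounded below by $a$, so the same monotone convergence argument yields $\lim_{k\to\infty}f_{\min}^{(k)}=f_b^\ast\in\mathbb{R}_+$; here one cannot claim $f_b^\ast=a$ without the stronger divergence condition $\sum\alpha_k=\infty$ together with $\sum\alpha_k^2<\infty$, which is why the theorem only asserts convergence of $f_{\min}^{(k)}$ to some real value. I would close by remarking that identifying $f_a^\ast$ or $f_b^\ast$ with a genuine local (or global) minimum, and estimating the rate, is exactly what motivates feeding the output of this algorithm as the initial vector into Algorithm \ref{alg:steep}.
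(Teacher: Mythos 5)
Your proposal is correct, but it reaches the stated conclusion by a different route than the paper. The paper's proof of part (a) works directly with the scalar sequence $f_{min}^{(k)}$: it bounds $|f_{min}^{(m+1)}-f_{min}^{(n)}|$ by $G^2\sum_{i}\|\alpha_i\|$ using the Lipschitz constant $G$ of $f$, the nonexpansiveness of the projection, and the bound $\|\partial f\|\leq G$, and then invokes summability of the steps to obtain a Cauchy sequence; for part (b) it bounds consecutive differences by $G^{2}\|\alpha_k\|$ and then falls back on monotonicity of $f_{min}^{(k)}$. You instead set up the textbook squared-distance recursion for the projected subgradient method, correct the missing convexity with an approximate subgradient inequality derived from the uniform subsmoothness of the active $f_k$, and then --- for the conclusion actually claimed --- observe that $f_{min}^{(k)}$ is nonincreasing by construction and bounded below by $a=\inf_{P^+}f$ (Theorem \ref{existence}), so the monotone convergence theorem settles both parts at once. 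That closing observation is accurate and arguably sharper than the paper's own treatment: the theorem as stated follows from monotonicity and boundedness alone, and the step-size hypotheses only become relevant for the stronger, unclaimed assertion that the limit equals the infimum (which, as you correctly note, would require $\sum\alpha_k=\infty$ together with $\sum\alpha_k^2<\infty$). Two minor cautions: the inequality $\langle\partial f(p_s^{(k)}),p_s^{(k)}-p_s^\ast\rangle\geq f(p_s^{(k)})-f(p_s^\ast)-C\|p_s^{(k)}-p_s^\ast\|_2^2$ needs the local subsmoothness estimate of Proposition \ref{def:sub1} upgraded to a uniform weak-convexity statement on all of $P^+$ (which holds because $\sigma_k\in C^{\infty}(P^+)$ with $P^+$ compact gives a uniform lower bound on the Hessians, and a pointwise maximum of weakly convex functions is weakly convex), and you should say so explicitly if you keep that step; since none of that machinery is used for the stated conclusion, its looseness does not affect correctness, but it does make the argument longer than necessary.
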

\begin{proof}
(a) 
Consider the partial sum, 
$\sum_{k=n}^{m}{\left(f_{min}^{\left(k+1\right)}-f_{min}^{\left(k\right)}\right)}=f_{min}^{\left(m+1\right)}-f_{min}^{\left(n\right)}$, 
for $m>n\in\mathbb{N}$. 
Define $I(k) \in \mathbb{N}_k$ as, 
$f\left(p_s^{\left(I\left(k\right)\right)}\right)
=\min{\left\{f\left(p_s^{\left(0\right)}\right),f\left(p_s^{\left(1\right)}\right),\cdots, f\left(p_s^{\left(k\right)}\right)\right\}}$.
Consequently, 
$f_{min}^{\left(m+1\right)}-f_{min}^{\left(n\right)}=f\left(p_s^{\left(I\left(m+1\right)\right)}\right)-f\left(p_s^{\left(I\left(n\right)\right)}\right)$. 
It is obvious that $I\left(m+1\right)\geq I\left(n\right)$.
If for all $m \in \mathbb{Z}_+$ with $m>n$, $I(m+1) = I(n)$,
then we can immediately get the convergence of the sequence $\left\{f_{min}^{\left(k\right)},k\in\mathbb{N}\right\}$, because the minimum value among the 
finite sequence $\left\{f\left(p_s^{\left(0\right)}\right),\cdots, f\left(p_s^{\left(k\right)}\right)\right\}$
stay unchanged after $k>n$.
Consider the case $I\left(m+1\right)> I\left(n\right)$,
\vspace{-3mm}
\begin{align*}
     & \|f_{min}^{\left(m+1\right)}-f_{min}^{\left(n\right)}\|
       = \|f\left(p_s^{\left(I\left(m+1\right)\right)}\right)
       - f\left(p_s^{\left(I\left(n\right)\right)}\right)\|
       \leq G \cdot 
	 \|p_s^{\left(I\left(m+1\right)\right)}-p_s^{\left(I\left(n\right)\right)}\| \\
     & = G\cdot 
       \|P\left(p_s^{\left(I\left(m+1\right)-1\right)}-\alpha_{I\left(m+1\right)-1} \:\partial f\left(p_s^{\left(I\left(m+1\right)-1\right)}\right)\right)-p_s^{\left(I\left(n\right)\right)}
       \|\\
       &
       \leq G\cdot
       \|p_s^{\left(I\left(m+1\right)-1\right)}-\alpha_{I\left(m+1\right)-1} \:\partial f\left(p_s^{\left(I\left(m+1\right)-1\right)}\right)-p_s^{\left(I\left(n\right)\right)}
       \| \\
                & \leq G\cdot
       \| p_s^{\left(I\left(m+1\right)-1\right)}-p_s^{\left(I\left(n\right)\right)}
       \|
       +G \cdot \|\alpha_{I\left(m+1\right)-1} \:\partial f\left(p_s^{\left(I\left(m+1\right)-1\right)}\right)\|\\
      & \leq G \cdot 
       \|p_s^{\left(I\left(m+1\right)-1\right)}-p_s^{\left(I\left(n\right)\right)}
       \|
       + G^2\cdot \|\alpha_{I\left(m+1\right)-1} \|
	 \leq \cdots\cdots
	 \leq G^2 \cdot\sum_{i=I\left(n\right)}^{I\left(m+1\right)-1}{\|\alpha_i \|}.
\end{align*}
 If $\sum_{i=1}^\infty{\alpha_i}< \infty$,  then $\forall~ \epsilon >0, \exists ~N,~ s.t.~m>n>N,~\sum_{i=n}^m{\|\alpha_i \|}<\epsilon$.  For this $n$ which satisfies $n>N$, we can make it arbitrarily large. If for all $n>N$, $I(n)\leq N$, then it follows that
 $\lim_{n\to \infty}{f_{min}^{\left(n\right)}}= f\left(p_s^{\left(j\right)}\right),\:j\leq N$. 
 Otherwise, $I\left(n\right)>N$, we can get  $\|\sum_{k=n}^{m}{\left(f_{min}^{\left(k+1\right)}-f_{min}^{\left(k\right)}\right)}\|=\|f_{min}^{\left(m+1\right)}-f_{min}^{\left(n\right)}\|\leq G^2\cdot\sum_{i=I\left(n\right)}^{I\left(m+1\right)-1}{\|\alpha_i \|}<G^2\: \epsilon$.
 As a result, $\sum_{k=1}^{\infty}{\left(f_{min}^{\left(k+1\right)}-f_{min}^{\left(k\right)}\right)}<\infty$, leading to, $\lim_{k\to \infty}{f_{min}^k}=f_a^*<\infty$.\\
(b)  
\vspace{-5mm}
\begin{align*}
     & \| f \left(p_s^{\left(k+1\right)}\right)
	  - f\left(p_s^{\left(k\right)}\right)
       \| 
      \leq G \cdot\|p_s^{\left(k+1\right)}-p_s^{\left(k\right)}\|
       = G \cdot
	 \| P 
	    \left(
	      p_s^{\left(k\right)}
	     -\alpha_k \:\partial f\left(p_s^{\left(k\right)}\right)
	    \right)
	    - p_s^{\left(k\right)}
	  \|\\
     & \leq G \cdot
	    \| p_s^{\left(k\right)}
	       -\alpha_k \: \partial f\left(p_s^{\left(k\right)}\right)
	       - p_s^{\left(k\right)}
	    \|
       \leq G \cdot
	\|\alpha_k~ \partial f\left(p_s^{\left(k\right)}\right)\|
	\leq G^2 \cdot \|\alpha_k\|.
\end{align*}
It is evident that if $\lim_{k\to \infty}{\alpha_k}=0$, 
then $\lim_{k\to \infty}{f\left(p_s^{\left(k\right)}\right)}=f_c^*<\infty$.
We cannot make sure that the generated sequence $\{f\left(p_s^{\left(k\right)}\right)|p_s^{\left(k\right)}\in P^+,k\in\mathbb{N}\}$
is monotonic decreasing.
Consequently,  $\lim_{k\to \infty}{f_{min}^{\left(k\right)}}=f_b^*\leq f_c^*$.
\end{proof}

 \begin{remark}\label{findminimizer}
 During an actual computation of the recursion sequence, when the iteration number is sufficiently large, an approximation of $f^*$ becomes available, and a finite sequence of variables $\left\{p_s^{(1)},p_s^{(2)},\cdots,p_s^{(n)},\cdots\right\}$  is generated. Then we need to identify a vector $p_s^{(k)}$ such that $f(p_s^{(k)})=f^*$. This particular $p_s^{(k)}$ will serve as the initial vector for algorithm \ref{alg:steep}. Additionally, the step length $\alpha_k=1/k^{0.5}$ typically results in faster initial convergence compared to $\alpha_k=1/k^{1.1}$. As a consequence, it is recommended that the readers follow the following steps: (1) employ the step length of $\alpha_k=1/k^{0.5}$ to execute a finite number of iterations, yielding a vector $p_{s,initial}^*$; (2) then, with the step length $\alpha_k=1/k^{1.1}$ and $p_{s,initial}^*$ as the initial vector, proceed with the recursion anew.
 \end{remark}
 
 \newpage
 \section{Numerical Experiments}\label{sec:examples}
In this section, we provide an illustrative example to demonstrate the practical application of the algorithms introduced in this paper. The network topology is depicted in Fig. \ref{fig:jing}, with nodes labeled from $1$ to $12$. Nodes $1,2,3,4$ represent power supply nodes, while nodes $5,6,7,8,9,10,11,12$ are designated as power demand nodes. This network model emulates two adjacent rings of a power transmission system.
Details regarding the power network's parameters and the computational results are presented in Table~\ref{table:one}, and details of the iterations of the proposed algporithms are plotted in Fig. \ref{fig.four}.
\begin{figure}[h]
\centering
\begin{tikzpicture}[tbcircle/.style={circle,minimum size=13pt,inner sep=0pt},
forward tbcircle/.style={tbcircle,fill=gray!70},
backward tbcircle/.style={tbcircle,fill=black!100},
tbsquare/.style={rectangle,rounded corners,minimum width=15pt,minimum height=15pt,fill=orange!50},
tbcarrow/.style={/utils/exec=\tikzset{tbcarrow pars/.cd,#1},
line width=0.1*\pgfkeysvalueof{/tikz/tbcarrow pars/width},
draw=\pgfkeysvalueof{/tikz/tbcarrow pars/draw},
-{Triangle[line width=\pgfkeysvalueof{/tikz/tbcarrow pars/line width},
length=1.1*\pgfkeysvalueof{/tikz/tbcarrow pars/width},
width=1.1*\pgfkeysvalueof{/tikz/tbcarrow pars/width},
fill=\pgfkeysvalueof{/tikz/tbcarrow pars/fill}]},
postaction={draw=\pgfkeysvalueof{/tikz/tbcarrow pars/fill},-,
line width=\pgfkeysvalueof{/tikz/tbcarrow pars/width}-5*\pgfkeysvalueof{/tikz/tbcarrow pars/line width},
shorten <=\pgfkeysvalueof{/tikz/tbcarrow pars/line width},
shorten >=1.25*\pgfkeysvalueof{/tikz/tbcarrow pars/width}-1.5*\pgfkeysvalueof{/tikz/tbcarrow pars/line width}}
},
tbcarrow pars/.cd,
fill/.initial=white,draw/.initial=black,width/.initial=4pt,
line width/.initial=0.4pt,]


\node[forward tbcircle,draw]  at (1,1) {$\textcolor{white}{9}$};
\node[forward tbcircle,draw]  at (2,1) {$\textcolor{white}{10}$};
\node[backward tbcircle,draw]  at (3,1) {$\textcolor{white}{3}$};
\node[forward tbcircle,draw]  at (4,1) {$\textcolor{white}{11}$};
\node[forward tbcircle,draw]  at (5,1) {$\textcolor{white}{12}$};

\node[backward tbcircle,draw]  at (1,2) {$\textcolor{white}{2}$};
\node[backward tbcircle,draw]  at (5,2) {$\textcolor{white}{4}$};

\node[forward tbcircle,draw]  at (1,3) {$\textcolor{white}{5}$};
\node[forward tbcircle,draw]  at (2,3) {$\textcolor{white}{6}$};
\node[backward tbcircle,draw]  at (3,3) {$\textcolor{white}{1}$};
\node[forward tbcircle,draw]  at (4,3) {$\textcolor{white}{7}$};
\node[forward tbcircle,draw]  at (5,3) {$\textcolor{white}{8}$};


\draw[tbcarrow={fill=black!70}] (1,2-0.2) to (1,1+0.2);
\draw[tbcarrow={fill=black!70}] (1,2+0.2) to (1,3-0.2);

\draw[tbcarrow={fill=black!70}] (2-0.2,3) to (1+0.2,3);
\draw[tbcarrow={fill=black!70}] (3-0.2,3) to (2+0.2,3);
\draw[tbcarrow={fill=black!70}] (3+0.2,3) to (4-0.2,3);
\draw[tbcarrow={fill=black!70}] (4+0.2,3) to (5-0.2,3);

\draw[tbcarrow={fill=black!70}] (2-0.2,1) to (1+0.2,1);
\draw[tbcarrow={fill=black!70}] (3-0.2,1) to (2+0.2,1);
\draw[tbcarrow={fill=black!70}] (3+0.2,1) to (4-0.2,1);
\draw[tbcarrow={fill=black!70}] (4+0.2,1) to (5-0.2,1);

\draw[tbcarrow={fill=black!70}] (5,2-0.2) to (5,1+0.2);
\draw[tbcarrow={fill=black!70}] (5,2+0.2) to (5,3-0.2);
\draw[tbcarrow={fill=black!70}] (3,1+0.2) to (3,3-0.2);

\end{tikzpicture}
\caption{A connected network with two subrings}
\label{fig:jing}
\end{figure}

\begin{table}[h]
\begin{center}
Table 1.A. Parameter specifications for the domain of feasible vectors.
\vspace{2mm}
\begin{tabular}{|l|l|l|l|l|l|l|}
\hline
Power & $p_1^{+,max}$ & $p_2^{+,max}$ & $p_3^{+,max}$ & $p_4^{+,max}$ &-$p_5^{-}$ & -$p_6^{-}$ \\
      & 25            & 25            & 25            & 25            & -8
  & -12 \\
\hline
      &-$p_7^{-}$     & -$p_8^{-}$    &-$p_9^{-}$     & -$p_{10}^{-}$ & -$p_{11}^{-}$ & -$p_{12}^{-}$ \\

      & -13           & -7            & -8            & -12           &- 11           & -9 \\
\hline
\end{tabular}

%
          Table 1.B. The parameter specifications of the power network 
	    are listed in this table, \\
	    though not listed are 
	    that the weights of all power lines equal 30.
	\vspace{2mm}
\begin{tabular}{|l|l|l|l|l|l|l|l|l|l|l|l|l|}
 \hline
$i,\qquad~~~$node&1&2&3&4&5&6&7&8&9&10&11&12\\
 \hline
 $m_i,$\qquad~inertia&10&10&10&10&1&1&1&1&1&1&1&1\\
 \hline
$d_i,$\qquad~ damping&4&4&4&4&1&1&1&1&1&1&1&1\\
 \hline
$K_2(i,i),$~noise & 2 & 2.3 & 2.5 & 2.7 & 1.6 & 1.7 & 1.8 &1.9&1.65&1.75&1.85&2.05\\
 \hline
\end{tabular}\\
\vspace{6mm}
%
Table 1.C. The minimum and optimal power supply vector 
starting from the vector $[23,19,24]$.
\vspace{2mm}
\begin{tabular}{|l|l|l|l|l|l|l|} 
\hline
\text{Minimum}
       & $p_1^+$ & $p_2^+$ & $p_3^+$ & $p_4^+$ & -$p_5^-$ & -$p_6^-$ \\
1.0244 & 20.2247 & 17.0309 & 23.0488 & 19.6956 & -8       & -12 \\
\hline
  & -$p_7^-$ &-$p_8^-$  & -$p_9^{-}$ & -$p_{10}^{-}$ & -$p_{11}^{-}$ & -$p_{12}^{-}$\\
  & -13      & -7       & -8         & -12           & -11           & -9  \\
\hline
\end{tabular}\\
\vspace{6mm}
%
Table 1.D. The minimum and optimal power supply vector 
starting from the vector $[19,19,19]$.
\vspace{2mm}
\begin{tabular}{|l|l|l|l|l|l|l|} 
\hline
\text{Minimum}
  & $p_1^+$&$p_2^+$&$p_3^+$&$p_4^+$&-$p_5^-$&-$p_6^-$\\
\hline
1.0245 & 12.6612 & 23.2683 & 23.3900 & 20.6805 & -8 & -12 \\
\hline
  & -$p_7^-$&-$p_8^-$&-$p_9^{-}$&-$p_{10}^{-}$&-$p_{11}^{-}$&-$p_{12}^{-}$\\
\hline
  & -13 & -7 & -8 & -12 & -11 & -9 \\
\hline
\end{tabular}
\captionof{table}{Several tables.} \label{table:one}
\end{center}
\end{table}

  \par 
All numerical experiments were conducted using MATLAB 2021b on a 64-bit MacBook equipped with an M1 chip and 8GB of RAM. The combined computation time for tasks (4.a) and (4.b) was approximately 1252.137 seconds, while task (4.c) required about 310.831 seconds.
Table~\ref{table:one}.D provides results obtained using a different initial vector. A comparison between Table~\ref{table:one}.C and Table~\ref{table:one}.D reveals the presence of two distinct isolated local minimizers for the control objective function.
\begin{figure}
\begin{tabular}{ccc}
\includegraphics[width=5.3cm,height=!]{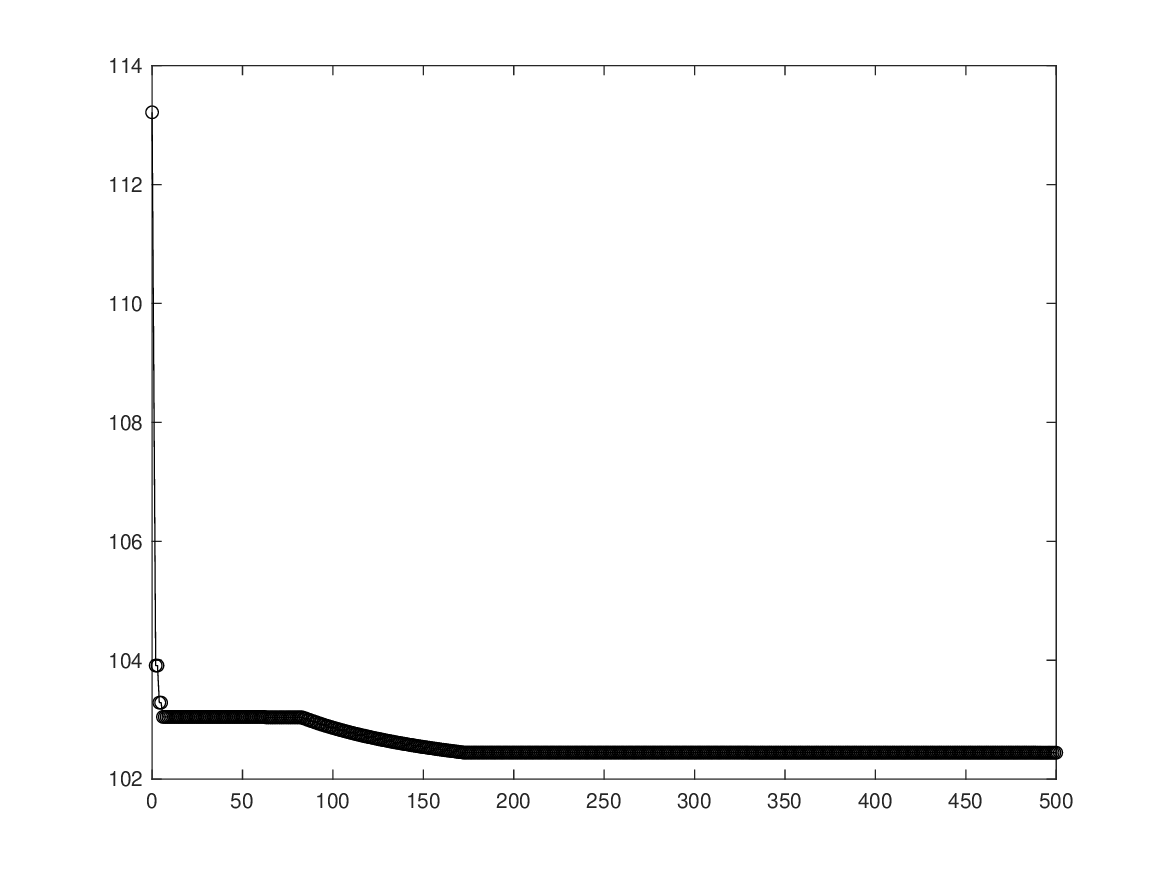} &
\includegraphics[width=5.3cm,height=!]{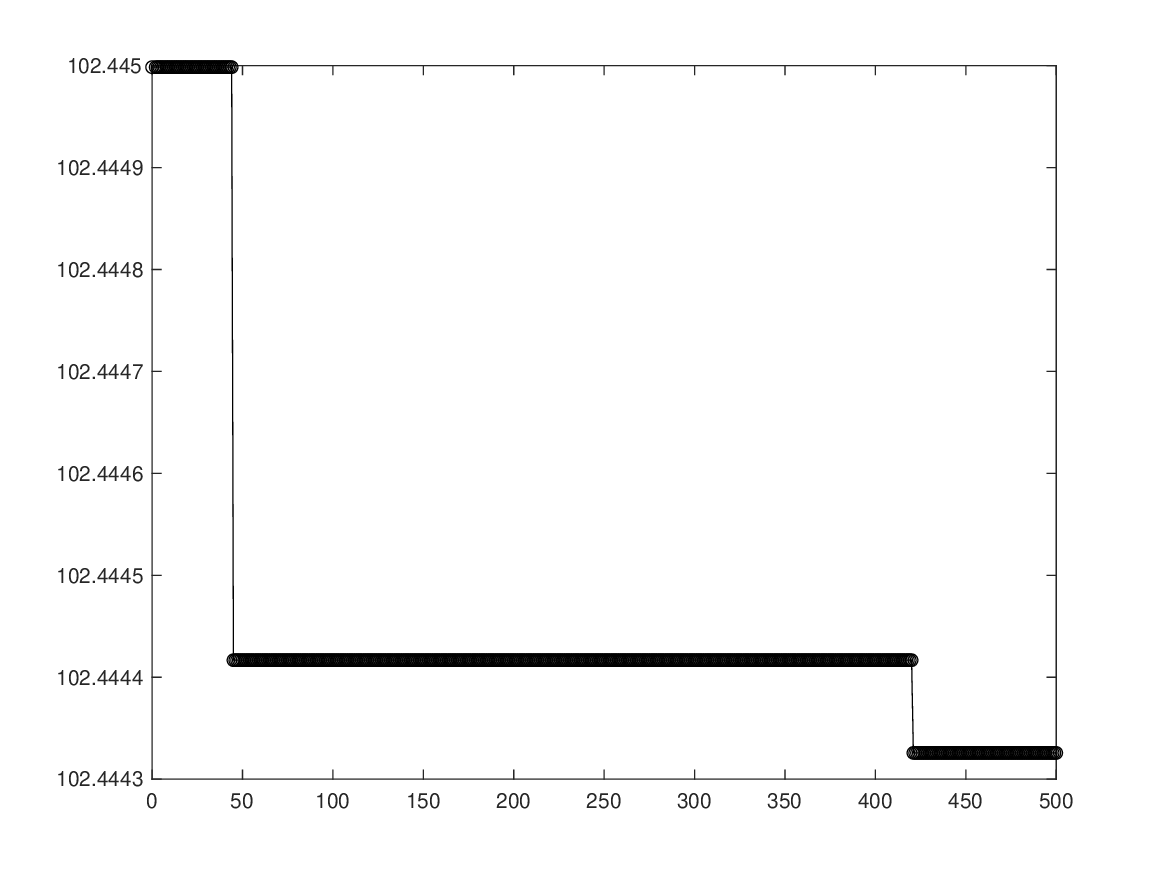} &
   \includegraphics[width=5.2cm,height=!]{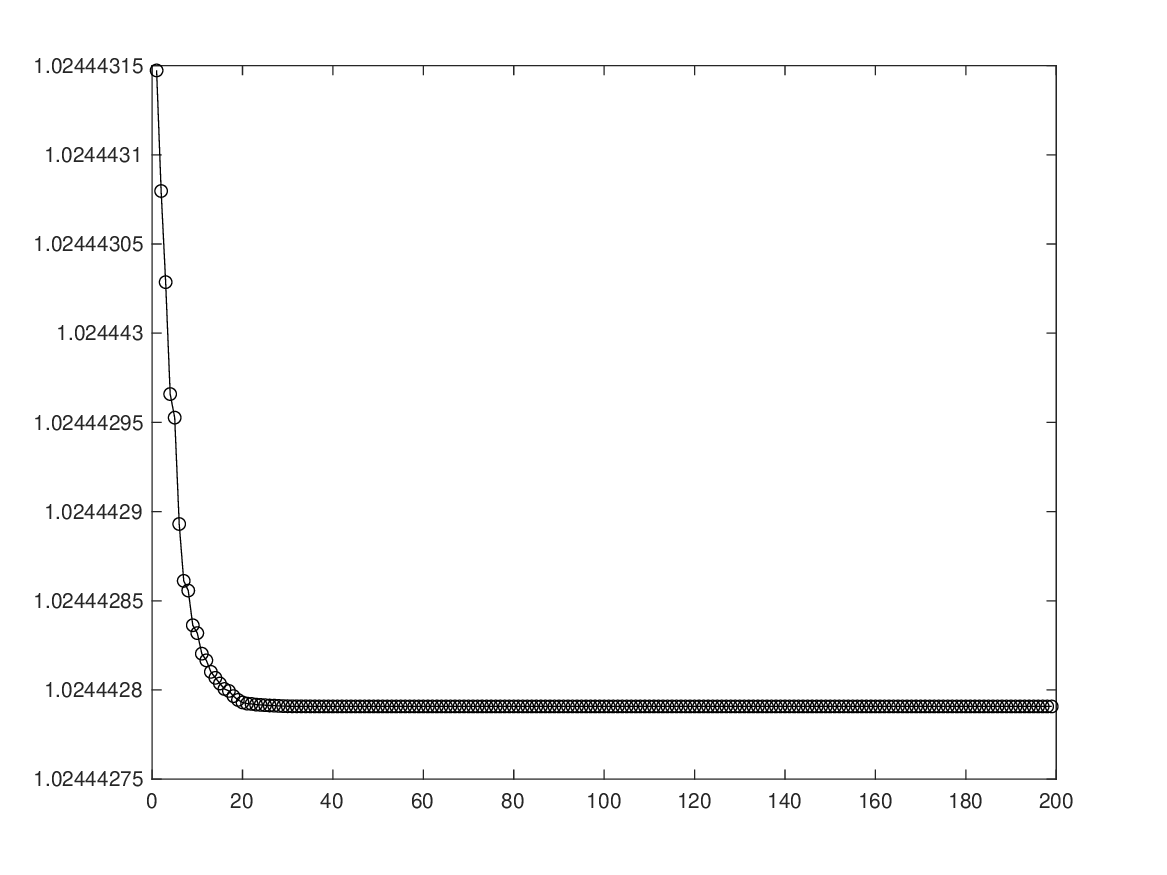}\\
 (4.a) & (4.b)& (4.c)\\
 \end{tabular}
\captionof{figure}{Figure (3.a) depicts the initial iteration of the proposed algorithm using the projected generalized subgradient method with the starting vector set as $[23,19,24]$. The x-axis represents the iteration number, and the y-axis reflects the value of the control objective function multiplied by a factor of 100.\\
Figure (3.b) shows the second iteration of the proposed algorithm using the projected generalized subgradient method. The initial vector is the optimal vector computed in (3.a). The axes are similar to (3.a).\\
Figure (3.c) displays the iteration of the proposed algorithm using the steepest descent method. The initial vector for this iteration is the optimal vector obtained in (4.b). Here, the x-axis corresponds to the iteration number, and the y-axis displays the control objective function value.}
\label{fig.four}
\end{figure}

\newpage
\section{Conclusions and further research}\label{conclusionfurtherinvestigation}
This inquiry is committed to the rigorous mathematical analysis and algorithmic resolution of optimization challenges inherent to the control of stochastic power systems. Our investigation is centered on a control objective function for ensuring the transient stability of power systems. Notably, the complexity of this optimization task is compounded by the incorporation of implicit functions, maximum, and absolute value operators within the control objective function. The algorithms proposed in this study, characterized by a polynomial complexity of $O\left(n^5\right)$, display computational feasibility but meanwhile reasonably high.
\par
In the prospective research, particular emphasis will be placed on the derivation of convergence rates and the efficiency of the advanced algorithms introduced herein. The  refinement of a second-order algorithm may represent a potential enhancement in a heightened level of precision to our methodological framework. Additionally, the exploration of algorithms with global convergence properties, as expounded in the publications such as \cite{liu2019successive} and \cite{wang2019global}, gives a possible trajectory for our future investigation.
\section{Acknowledgments}
Zhen Wang is grateful to the China Scholarship Council (No. 202106220104) for financial support for his stay at Delft University of Technology.
\newpage
\appendix
  \section{Formulas of the Matrices in Sections \ref{gradientandHessianofV} and \ref{gradientandHessianofS}}
Define $p_{sp}=\begin{bmatrix}p_s,&-p^-\end{bmatrix}$\\$\in\mathbb{R}^{n_V-1}$, the weight matrices following from the perturbation from $p_s$ to $p_s+\delta\:\mu$,
  \begin{equation}\label{W1W2}
 \begin{small}
\begin{aligned}
&\text{for}\:\:\mu=e_k,\\
W^{(1)}&=W\diag\left(-(B^{\top}U^{\top}\Lambda^{\dag}E\:p_{sp})\circ(1-(B^{\top}U^{\top}\Lambda^{\dag}E\:p_{sp})\circ^2)\circ^{-1/2}\circ(B^{\top}U^{\top}\Lambda^{\dag}E_k)\right),\\
W^{(2)}&=W\diag\left(1/2\cdot\left[-\left\{(B^{\top}U^{\top}\Lambda^{\dag}E_k)\circ(1-(B^{\top}U^{\top}\Lambda^{\dag}E\:p_{sp})\circ^2)\circ^{-1/2}\circ(B^{\top}U^{\top}\Lambda^{\dag}E_k)\right\}\right.\right.-\left\{(B^{\top}U^{\top}\Lambda^{\dag}E\:p_{sp})\circ\right.\\
&\left.\circ(-1/2\cdot(1-(B^{\top}U^{\top}\Lambda^{\dag}E\:p_{sp})\circ^2)\circ^{-3/2})\circ(-2\:B^{\top}U^{\top}\Lambda^{\dag}E\:p_{sp})\right. \left.\left.\left. \circ(B^{\top}U^{\top}\Lambda^{\dag}E_k)\circ(B^{\top}U^{\top}\Lambda^{\dag}E_k)\right\}\right]\right); \\
&\text{for}\:\:\mu=e_k+e_j,\\
W^{(1)}&=W\diag\left(-(B^{\top}U^{\top}\Lambda^{\dag}E\:p_{sp})\circ(1-(B^{\top}U^{\top}\Lambda^{\dag}E\:p_{sp})\circ^2)\circ^{-1/2}\circ(B^{\top}U^{\top}\Lambda^{\dag}(E_k+E_j)\right),\\
W^{(2)}&=W\diag\left(1/2\cdot\left[-\left\{(B^{\top}U^{\top}\Lambda^{\dag}(E_k+E_j))\circ(1-(B^{\top}U^{\top}\Lambda^{\dag}E\:p_{sp})\circ^2)\circ^{-1/2}\circ(B^{\top}U^{\top}\Lambda^{\dag}(E_k+E_j))\right\}-\right.\right.\\
& -\left\{(B^{\top}U^{\top}\Lambda^{\dag}E\:p_{sp})\circ({-1/2}\cdot(1-(B^{\top}U^{\top}\Lambda^{\dag}E\:p_{sp})\circ^2)\circ^{-3/2})\right.\circ\left.\left.\left.  (-2B^{\top}U^{\top}\Lambda^{\dag}E\:p_{sp})\circ(B^{\top}U^{\top}\Lambda^{\dag}(E_k+E_j))\circ\right.\right.\right.\\
&\circ\left.\left.\left.(B^{\top}U^{\top}\Lambda^{\dag}(E_k+E_j))\right\}\right]\right).
\end{aligned}
\end{small}
\end{equation}
Taylor expansions of a variance $V_i$ and a row vector of output matrix $C_{d,i}$ with respect to $\delta$,
\begin{equation}\label{ViCdQVi}
 \begin{small}
\begin{aligned}
V_i(p_s+\delta\mu)&=C_{d,i}(\delta)Q(\delta)C_{d,i}(\delta)^\top,Q(\delta)=Q^{(0)}+Q^{(1)}\delta+Q^{(2)}\delta^2+O(\delta^3),\\
C_{d,i}(\delta)&=C_{d,i}^{(0)}+C_{d,i}^{(1)}\delta+C_{d,i}^{(2)}\delta^2+O(\delta^3),\\
V_i(p_s+\delta\mu)&=\left(C_{d,i}^{(0)}+C_{d,i}^{(1)}\delta+C_{d,i}^{(2)}\delta^2+O(\delta^3)\right)\left(Q^{(0)}+Q^{(1)}\delta+Q^{(2)}\delta^2+O(\delta^3)\right)\left(C_{d,i}^{(0)}+C_{d,i}^{(1)}\delta+C_{d,i}^{(2)}\delta^2+O(\delta^3)\right)^\top\\
&=C_{d,i}^{(0)}Q^{(0)}{C_{d,i}^{(0)}}^\top+ \left[C_{d,i}^{(1)}Q^{(0)}{C_{d,i}^{(0)}}^\top+C_{d,i}^{(0)}Q^{(1)}{C_{d,i}^{(0)}}^\top+C_{d,i}^{(0)}Q^{(0)}{C_{d,i}^{(1)}}^\top\right]\delta+\left[C_{d,i}^{(1)}Q^{(1)}{C_{d,i}^{(0)}}^\top+\right.\\
&C_{d,i}^{(1)}Q^{(0)}{C_{d,i}^{(1)}}^\top+C_{d,i}^{(0)}Q^{(1)}{C_{d,i}^{(1)}}^\top+C_{d,i}^{(2)}Q^{(0)}{C_{d,i}^{(0)}}^\top+\left.C_{d,i}^{(0)}Q^{(2)}{C_{d,i}^{(0)}}^\top+C_{d,i}^{(0)}Q^{(0)}{C_{d,i}^{(2)}}^\top\right]\:\delta^2+O(\delta^3).
\end{aligned}
\end{small}
\end{equation}
Formulas of the first and second directional derivatives of the variance $V_i$ of a vector $p_s$,
\begin{equation}\label{GradientandHessian}
 \begin{small}
\begin{aligned}
\nabla_\mu V_i(p_s)&=\left[C_{d,i}^{(1)}Q^{(0)}{C_{d,i}^{(0)}}^\top+C_{d,i}^{(0)}Q^{(1)}{C_{d,i}^{(0)}}^\top+C_{d,i}^{(0)}Q^{(0)}{C_{d,i}^{(1)}}^\top\right],\\
\nabla^2_\mu V_i(p_s)&=2 \left[C_{d,i}^{(1)}Q^{(1)}{C_{d,i}^{(0)}}^\top+C_{d,i}^{(1)}Q^{(0)}{C_{d,i}^{(1)}}^\top+C_{d,i}^{(0)}Q^{(1)}{C_{d,i}^{(1)}}^\top+C_{d,i}^{(2)}Q^{(0)}{C_{d,i}^{(0)}}^\top+C_{d,i}^{(0)}Q^{(2)}{C_{d,i}^{(0)}}^\top+C_{d,i}^{(0)}Q^{(0)}{C_{d,i}^{(2)}}^\top\right].
\end{aligned}
 \end{small}
\end{equation}
The expressions for the output and input matrices after computing the matrices $U^{\left(0\right)}, U^{\left(1\right)}, U^{\left(2\right)}$,
\begin{equation}\label{Cd0Cd1Cd2Kd0Kd1Kd2}
 \begin{small}
\begin{aligned}
C_d^{(0)}&=\begin{bmatrix}\left(B^{\top}M^{-1/2}U^{(0)}\right)\left([2:n_{V}]\right),&0_{n_{E}\times n_{V}}\end{bmatrix},\\
C_d^{(1)}&=\begin{bmatrix}\left(B^{\top}M^{-1/2}U^{(1)}\right)\left([2:n_{V}]\right),&0_{n_{E}\times n_{V}}\end{bmatrix},\\
C_d^{(2)}&=\begin{bmatrix}\left(B^{\top}M^{-1/2}U^{(2)}\right)\left([2:n_{V}]\right),&0_{n_{E}\times n_{V}}\end{bmatrix},\\
K_d^{(0)}&=\left[\begin{array}{l}0_{(n_{V}-1)\times n_{V}}\\{U^{(0)}}^{\top}M^{-1/2}K_2\end{array}\right],\:K_d^{(1)}=\left[\begin{array}{l}0_{(n_{V}-1)\times n_{V}}\\{U^{(1)}}^{\top}M^{-1/2}K_2\end{array}\right],\:K_d^{(2)}=\left[\begin{array}{l}0_{(n_{V}-1)\times n_{V}}\\{U^{(2)}}^{\top}M^{-1/2}K_2\end{array}\right].
\end{aligned}
 \end{small}
\end{equation}
The expressions for the system matrices following the computation of the matrices $U^{\left(0\right)}, \;U^{\left(1\right)},\;U^{\left(2\right)}$,
\begin{equation}\label{Ad0Ad1Ad2}
 \begin{small}
\begin{aligned}
J_d^{(0)}&=\left[\begin{array}{ll}0_{(n_{V}-1)\times (n_{V}-1)}&0_{(n_{V}-1)\times 1}\:|\:I_{n_{V}-1}\\-\left({U^{(0)}}^{\top}M^{-1/2}BW^{(0)}M^{-1/2}U^{(0)}\right)\left([2:n_{V}]\right)&-{U^{(0)}}^{\top}M^{-1}DU^{(0)}\end{array}\right],\\
J_d^{(1)}(1,1)&=0_{(n_{V}-1)\times (n_{V}-1)},~J_d^{(1)}(1,2)=0_{(n_{V}-1)\times n_{V}},\\
J_d^{(1)}(2,1)&=-\left({U^{(1)}}^{\top}M^{-1/2}BW^{(0)}B^{\top}M^{-1/2}U^{(0)}\right)\left([2:n_{V}]\right)-\left({U^{(0)}}^{\top}M^{-1/2}BW^{(1)}B^{\top}M^{-1/2}U^{(0)}\right)([2:n_{V}])+\\
&-\left({U^{(0)}}^{\top}M^{-1/2}BW^{(0)}B^{\top}M^{-1/2}U^{(1)}\right)([2:n_{V}]),
\end{aligned}
 \end{small}
\end{equation}
\begin{equation*}
 \begin{small}
\begin{aligned}
J_d^{(1)}(2,2)&=-\left[{U^{(1)}}^{\top}M^{-1}DU^{(0)}+{U^{(0)}}^{\top}M^{-1}DU^{(1)}\right],\\
J_d^{(2)}(1,1)&=0_{(n_{V}-1)\times (n_{V}-1)},~J_d^{(2)}(1,2)=0_{(n_{V}-1)\times n_{V}},\\
J_d^{(2)}(2,1)&=-\left({U^{(1)}}^{\top}M^{-1/2}BW^{(1)}B^{\top}M^{-1/2}U^{(0)}\right)([2:n_V])-\left({U^{(1)}}^{\top}M^{-1/2}BW^{(0)}B^{\top}M^{-1/2}U^{(1)}\right)([2:n_V])+\\
&  -\left({U^{(0)}}^{\top}M^{-1/2}BW^{(1)}B^{\top}M^{-1/2}U^{(1)}\right)([2:n_V])-\left({U^{(2)}}^{\top}M^{-1/2}BW^{(0)}B^{\top}M^{-1/2}U^{(0)}\right)([2:n_V])+\\
&  -\left({U^{(0)}}^{\top}M^{-1/2}BW^{(2)}B^{\top}M^{-1/2}U^{(0)}\right)([2:n_V])-\left({U^{(0)}}^{\top}M^{-1/2}BW^{(0)}B^{\top}M^{-1/2}U^{(2)}\right)([2:n_V]),\\
J_d^{(2)}(2,2)&=-{U^{(1)}}^{\top}M^{-1}D\:U^{(1)}-{U^{(2)}}^{\top}M^{-1}DU^{(0)}-{U^{(0)}}^{\top}M^{-1}D\:U^{(2)}.
\end{aligned}
 \end{small}
\end{equation*}
Applying a Taylor expansion to the Lyapunov equation,
\begin{equation}\label{Lyapequation1}
 \begin{small}
\begin{aligned}
0&=(Q^{(0)}+Q^{(1)}\delta+Q^{(2)}\delta^2)(J_d^{(0)}+J_d^{(1)}\delta+J_d^{(2)}\delta^2)^\top+(J_d^{(0)}+J_d^{(1)}\delta+J_d^{(2)}\delta^2)(Q^{(0)}+Q^{(1)}\delta+Q^{(2)}\delta^2)+\\
&\quad ~+(K_d^{(0)}+K_d^{(1)}\delta+K_d^{(2)}\delta^2)(K_d^{(0)}+K_d^{(1)}\delta+K_d^{(2)}\delta^2)^\top\Rightarrow\\
0&=Q^{(0)}{J_d^{(0)}}^\top+J_d^{(0)}Q^{(0)}+K_d^{(0)}{K_d^{(0)}}^\top+\left[Q^{(1)}{J_d^{(0)}}^\top+J_d^{(0)}Q^{(1)}+Q^{(0)}{J_d^{(1)}}^\top+J_d^{(1)}Q^{(0)}+K_d^{(0)}{K_d^{(1)}}^\top+\right.\\
&\left. \qquad\qquad K_d^{(1)}{K_d^{(0)}}^\top\right]\delta+\left[(Q^{(2)}{J_d^{(0)}}^\top+J_d^{(0)}Q^{(2)}+Q^{(0)}{J_d^{(2)}}^\top+J_d^{(2)}Q^{(0)})+\right.\\
&\left.\qquad\qquad\qquad\qquad\qquad\qquad+Q^{(1)}{J_d^{(1)}}^\top+J_d^{(1)}Q^{(1)}+K_d^{(0)}{K_d^{(2)}}^\top+K_d^{(1)}{K_d^{(1)}}^\top+K_d^{(2)}{K_d^{(0)}}^\top\right]\delta^2\\
&\quad 
\end{aligned}
 \end{small}
\end{equation}
%
%
%
\newpage
\bibliographystyle{plain}
\bibliography{sicon}
\end{document}